\numberwithin{equation}{section} 
\newtheorem{thm}{Theorem}[section]
\newtheorem{pro}[thm]{Proposition}
\newtheorem{lm}[thm]{Lemma}
\numberwithin{equation}{section}
\theoremstyle{remark}
\newtheorem{rem}[thm]{Remark}
\theoremstyle{definition}
\DeclareMathOperator*{\End}{End}
\DeclareMathOperator*{\Irr}{Irr}
\DeclareMathOperator*{\disc}{disc}
\DeclareMathOperator*{\Gal}{Gal}
\DeclareMathOperator*{\Sp}{Sp}
\DeclareMathOperator*{\SU}{SU}
\DeclareMathOperator*{\U}{U}
\DeclareMathOperator*{\SO}{SO}
\DeclareMathOperator*{\PSO}{PSO}
\newcommand{\SL}{\mathrm{SL}} 	
\newcommand{\GL}{\mathrm{GL}} 
\DeclareMathOperator*{\GSO}{GSO}
\DeclareMathOperator*{\GSp}{GSp}
\newcommand{\Spin}{\mathrm{Spin}}	
\DeclareMathOperator*{\PSL}{PSL}
\DeclareMathOperator*{\PGL}{PGL}
\newcommand{\GSpin}{\mathrm{GSpin}}
\DeclareMathOperator*{\Ind}{Ind}
\DeclareMathOperator*{\simi}{sim}
\DeclareMathOperator*{\sgn}{sgn}
\DeclareMathOperator*{\temp}{temp}
\DeclareMathOperator*{\scup}{scusp}  
\DeclareMathOperator*{\unit}{unit}
\DeclareMathOperator*{\ad}{ad}
\DeclareMathOperator*{\scn}{sc}
\DeclareMathOperator*{\el}{ell}
\DeclareMathOperator*{\Nrd}{Nrd}
\DeclareMathOperator*{\Res}{Res}
\DeclareMathOperator*{\der}{der}
\DeclareMathOperator*{\diag}{diag}
\DeclareMathOperator*{\Aut}{Aut}
\DeclareMathOperator*{\id}{id}
\DeclareMathOperator*{\Hom}{Hom}
\newcommand{\mcA}{\mathcal{A}}
\newcommand{\vp}{\varphi}
\newcommand{\tvp}{\widetilde{\varphi}}
\newcommand{\tchi}{\widetilde{\chi}}
\newcommand{\teta}{\widetilde{\eta}}
\newcommand{\si}{\sigma}
\newcommand{\s}{\cong}			 
\newcommand{\pp}{\mathcal{P}} 	
\newcommand{\OO}{\mathcal{O}}
\newcommand{\ts}{\widetilde{\sigma}}
\newcommand{\CC}{\mathbb{C}}
\newcommand{\NN}{\mathbb{N}}
\newcommand{\QQ}{\mathbb{Q}}
\newcommand{\ZZ}{\mathbb{Z}}
\def\L{\mathcal L}
\def\cS{\mathcal S}
\newcommand{\tG}{\widetilde{G}}
\newcommand{\tM}{\widetilde{M}}
\title[Local Langlands Conjecture for $p$-adic $\GSpin_4,$ $\GSpin_6,$ and their inner forms]
{Local Langlands Conjecture for $p$-adic $\GSpin_4,$ $\GSpin_6,$ and their inner forms}
\author[Mahdi Asgari and Kwangho Choiy]{Mahdi Asgari and Kwangho Choiy}
\address{Mahdi Asgari\\
Department of Mathematics,
Oklahoma State University,
Stillwater, OK 74078-1058,
U.S.A.}
\email{asgari@math.okstate.edu}
\address{Kwangho Choiy\\
Department of Mathematics,
Southern Illinois University,
Carbondale, IL 62901-4408,
U.S.A.}
\email{kchoiy@siu.edu}
\subjclass[2010]{Primary: 22E50; Secondary: 11S37, 20G25, 22E35}
\keywords{local Langlands correspondence of $p$-adic groups, $L$-packet, inner form, general spin group, restriction of representations, equality of local factors}
\begin{document}
\begin{abstract}
We establish the local Langlands conjecture for small rank general spin groups 
$\GSpin_4$ and $\GSpin_6$ as well as their inner forms.  We construct appropriate 
$L$-packets and prove that these $L$-packets satisfy the properties expected of 
them to the extent that the corresponding local factors are available. We are also able to 
determine the exact sizes of the $L$-packets in many cases. 
\end{abstract}
\maketitle 
	\section{Introduction} \label{intro}
In this article, we construct $L$-packets for the split general spin groups $\GSpin_4,$ $\GSpin_6,$ 
and their inner forms over a $p$-adic field $F$ of characteristic 0, and more importantly, 
establish their internal structures in terms of characters of component groups, as predicted 
by the Local Langlands Conjecture (LLC).  This establishes the LLC for the groups in question 
(cf. Theorems \ref{1-1 for GSpin4} and \ref{1-1 for GSpin6} and Propositions \ref{preserve4} 
and \ref{preserve6}).  The construction of the $L$-packets is essentially an exercise in 
restriction of representations, thanks to the structure, as algebraic groups, of the groups we 
consider; however, proving the properties of the $L$-packets requires some deep results 
of Hiraga-Saito as well as Aubert-Baum-Plymen-Solleveld as we explain below. 

Let $W_F$ denote the Weil group of $F.$  
In a general setting, if $G$ denotes a connected, reductive, linear, algebraic group over 
$F,$ then the Local Langlands Conjecture asserts that there is a surjective, finite-to-one 
map from the set $\Irr(G)$ of isomorphism classes of irreducible smooth complex 
representations of $G(F)$ to the set $\Phi(G)$ of $\widehat{G}$-conjugacy classes 
of $L$-parameters, i.e., admissible homomorphisms 
$\vp: W_F \times {\SL}_2(\CC) \longrightarrow {}^LG.$ 
Here $\widehat{G} = {}^LG^0$ denotes the connected component 
of the $L$-group of $G,$ i.e., the complex dual of $G$ \cite{bo79}.
Given $\vp \in \Phi(G),$ its fiber $\Pi_{\vp}(G),$ which is called an $L$-packet for $G,$ 
is expected to be controlled by a certain finite group living in the complex dual group $\widehat{G}.$
Furthermore, the map is supposed to preserve certain local factors, such as 
$\gamma$-factors, $L$-factors, and $\epsilon$-factors.

The LLC is already known for several cases: $\GL_n$ \cite{ht01, he00, scholze13}, $\SL_n$ \cite{gk82}, 
$\U_2$ and $\U_3$ \cite{rog90}, 
$F$-inner forms of $\GL_n$ and $\SL_n$ \cite{hs11, abps13},  
$\GSp_4$ \cite{gt},
$\Sp_4$ \cite{gtsp10}, 
the $F$-inner form $\GSp_{1,1}$ of $\GSp_4$ \cite{gtan12}, 
the $F$-inner form $\Sp_{1,1}$ of $\Sp_4$ \cite{choiysp11},
quasi-split orthogonal and symplectic groups \cite{art12}, 
unitary groups \cite{mok13}, and
non quasi-split inner forms of unitary groups \cite{kmsw14}.

We consider the case of $G = \GSpin_4, \GSpin_6,$ or one of their non quasi-split $F$-inner forms.  
Our approach is based on the study of the restriction of representations from a connected reductive $F$-group to 
a closed subgroup having an identical derived group as $G$ itself. This approach originates in the 
earlier work on the LLC for $\SL_n$ \cite{gk82}.  Gelbart and Knapp studied the restriction of representations 
of $\GL_n$ to $\SL_n$ and established the LLC for $\SL_n,$ assuming the LLC for $\GL_n$ which was later 
proved \cite{ht01, he00, scholze13}.  Given an $L$-parameter $\vp \in \Phi(\SL_n),$ the $L$-packet $\Pi_{\vp}(\SL_n)$ 
is proved to be in bijection with the component group $\cS_{\vp}(\widehat{\SL_n})$ of the centralizer of the image of 
$\vp$ in $\widehat{\SL_n}.$  
The multiplicity one property for the restriction from $\GL_n$ to $\SL_n$ \cite{hosh75, tad92} and the fact that all 
$L$-packets of $\GL_n$ are singletons \cite{ht01, he00, scholze13}, are indispensable to establishing the bijection.

Later on, Hiraga and Saito extended the LLC for $\SL_n$ and the result of Labesse and Langlands \cite{ll79} for 
the non-split inner form $\SL_2'$ of $\SL_2$ to the non-split inner form $\SL_n'$ of $\SL_n$ \cite{hs11}, 
except for the representations of $\SL_n'$ whose liftings to the split $\GL_n$ are not generic.  
Those cases were dealt with afterwards by Aubert, Baum, Plymen, and Solleveld \cite{abps13}.
The restriction approach is also used in the case of $\SL_n',$ 
after establishing the LLC for the non-split inner form 
$\GL_n'$ of $\GL_n$ by means of the LLC for $\GL_n$ and the local Jacquet-Langlands correspondence 
\cite{jl, dkv, rog83, ba08}.  
However, there is some subtlety in applying the restriction technique from $\GL_n'$ 
to $\SL_n'$ since, unlike in the case of split $\SL_n,$ the multiplicity one property fails in this case.  
Thus, Hiraga and Saito consider a central extension $\cS_{\vp, \scn}(\widehat{{\SL}_n})$ of the connected 
component group $\PGL_n(\CC)$ by a certain quotient group $\widehat Z_{\vp, \scn}({\SL_n})$ of the center 
of $\SL_n(\CC).$ 
They then prove that the set of irreducible representations of the finite group $\cS_{\vp, \scn}(\widehat{{\SL}_n})$ 
governs the restriction from $\GL_n'$ to $\SL_n'$ and parameterizes the $L$-packets for $\SL_n'.$  
The central extension approach turns out to also include the case of $G=\SL_n$ and the previous parameterization 
of $L$-packets of $\SL_n.$

The LLC for the groups we consider in this paper is related to the LLC for $\SL_n$ \cite{gk82} and 
its $F$-inner form $\SL_n'$ \cite{hs11,abps13}.  Write $G$ for $\GSpin_4,$ $\GSpin_6,$ 
or one of their non quasi-split $F$-inner forms. 
It follows from the structure of $G$ as an algebraic group that it is an intermediate group between a product of 
$\SL_{m_i}$ and a product of $\GL_{m_i}$ or their $F$-inner forms with suitable integers $m_i.$  We give an 
explicit description of each group structure in Section \ref{gp structure}.  We are then able to utilize the LLC for 
$\GL_n$ \cite{ht01, he00, scholze13} and $\GL_n'$ \cite{hs11}.  At the same time, using a theorem of Labesse \cite{la85}, 
we are able to define a surjective, finite-to-one map
\begin{equation} \label{L-map intro}
{\L}: \Irr(G) \longrightarrow \Phi(G),
\end{equation}
and construct $L$-packets $\Pi_{\vp}(G)$ for each $\vp \in \Phi(G).$

We next study the internal structure of each $L$-packet. Based on the work of Hiraga and Saito \cite{hs11},  
we investigate the central extension $\cS_{\vp, \scn}$ for our case and prove that 
$\cS_{\vp, \scn}$ is embedded into $\cS_{\vp, \scn}(\widehat{\SL_n}).$  
This is where the internal structures of the $L$-packets for $\SL_n$ and $\SL_n'$ are needed.
We then prove that there is a one-to-one correspondence
\begin{equation*} \label{bij intro} 
\Pi_{\vp}(G)  \, \overset{1-1}{\longleftrightarrow} \,   \Irr(\cS_{\vp, \scn}, \zeta_G) ,
\end{equation*}
where $\Irr(\cS_{\vp, \scn}, \zeta_{G})$ denotes the set of irreducible representations of $\cS_{\vp, \scn}$ 
with central character $\zeta_{G}$ corresponding 
to the group $G$ via the Kottwitz isomorphism 
\cite{kot86} (cf. Theorems \ref{1-1 for GSpin4} and \ref{1-1 for GSpin6}). 
Moreover, using Galois cohomology, we prove that the possible sizes for the $L$-packet $\Pi_{\vp}(G)$ are 
1, 2, and 4 when $p \not= 2$ and 1, 2, 4, and 8 when $p=2$ (cf. Propositions \ref{pro size 4} and \ref{pro size 6}).  
In the case of $G=\GSpin_4$ we are also able to show that only 1, 2, and 4 occur for any $p$ (see Remarks \ref{cor for I(GSpin4)} and \ref{rem 2 for I(GSpin4)}).
We do this using the classification of the group of characters stabilizing representations. 
Further, we describe the group structure of the central extension $\cS_{\vp, \scn},$ provide all the sizes of $L$-packets for $\GSpin_{4}$ 
and its non quasi-split $F$-inner forms,
and discuss the multiplicity in restriction.
In the case of $\GSpin_6,$ unlike that of $\GSpin_4,$ we do not classify the group of characters for $\GSpin_6$ 
nor do we discuss the group structure of $\cS_{\vp, \scn}.$ This is due to the fact that 
a full classification of irreducible $L$-parameters in $\Phi(\SL_4)$ is not currently available.  
Accordingly, the determination of all the sizes of $L$-packets for $\GSpin_{6}$ 
and its non quasi-split $F$-inner forms 
as well as the multiplicity in restriction are not addressed in this paper. 
These questions  will require further study of the finite group $\cS_{\vp, \scn}$ for each $L$-parameter $\vp$ for $\GSpin_6.$

Furthermore, in Sections \ref{properties for gspin4} and \ref{properties for gspin6} we verify that the Plancherel measure is preserved within $L$-packets and between inner twisting. Also, the ${\L}$-map satisfies that an irreducible representation is essentially square integrable (respectively, tempered) if and only if its $L$-parameter does not factor through any proper Levi subgroup (respectively, the image of its $L$-parameter is bounded). 

When $G$ is the split group $\GSpin_4$ or $\GSpin_6,$ we prove that the local $L$-, $\epsilon$-, and $\gamma$-factors 
are preserved via the ${\L}$-map in \eqref{L-map intro}.  Given $\tau \in \Irr(\GL_r),$ $r \ge 1,$ and $\si \in \Irr(G)$ 
which is assumed to be either $\psi$-generic or non-supercuspidal if $r > 1,$  we let $\vp_\tau$ be the $L$-parameter of $\tau$ via the LLC 
for $\GL_r$ and let $\vp_\si = \L (\si).$ 
Thanks to the structure theory detailed in Section \ref{gp structure}, we are able to use results on the generic Langlands functorial 
transfer from general spin groups to the general linear groups which are already available \cite{acs-local,ash06duke, ash14manuscripta}.  
As a result, we prove   
\begin{eqnarray} 
\nonumber 
L(s, \tau \times \si) &=& L(s, \vp_\tau \otimes \vp_\sigma), \\ 
\epsilon(s, \tau \times \si, \psi) &=& \epsilon(s, \vp_\tau \otimes \vp_\sigma, \psi), \\  
\nonumber 
\gamma(s, \tau \times \si, \psi) &=& \gamma(s, \vp_\tau \otimes \vp_\sigma, \psi). 
\end{eqnarray} 
Here, the local factors on the left hand side are those attached by Shahidi \cite[Theorem 3.5]{shahidi90annals} initially 
to generic representations and extended to all non-generic non-supercuspidal representations via the Langlands classification 
and the multiplicativity of the local factors \cite[\S 9]{shahidi90annals}, and the factors on the right hand side are Artin 
local factors associated with the given representations of the Weil-Deligne group of $F$ (cf. Sections \ref{L-factors4} and \ref{L-factors6}).

Another expected property of the $L$-packets is that they should satisfy the local character identities of the theory of (twisted) endoscopy (see \cite{gan-chan}, for example).  It is certainly desirable to study this question for the groups we consider and the $L$-packets we construct, a task we leave 
for a future work. 

We finally remark that, due to lack of the LLC for the quasi-split special unitary group $\SU_n,$ our method is currently limited to 
split groups  $\GSpin_4,$ $\GSpin_6,$ and their non quasi-split $F$-inner forms. The case of the quasi-split non-split groups $\GSpin^*_4$ 
and $\GSpin^*_6$ will be addressed in a forthcoming work.

The structure of this paper is as follows.  In Section \ref{prelim} we introduce the basic notations and review some background material. 
We also describe the algebraic group structure, $F$-points, and $L$-groups of the groups $\GSpin_4,$ $\GSpin_6,$ and 
their non quasi-split $F$-inner forms, which are the groups under consideration in this paper. 
Section \ref{general LLC} states the LLC and the conjectural structure of $L$-packets in a general setting. 
In Section \ref{results in rest} we review some well-known results on restriction.  We then prove our main results: 
the LLC for $\GSpin_4$ and its non quasi-split $F$ inner forms in Section \ref{LLC4}, and 
the LLC for $\GSpin_6$ and its non quasi-split $F$-inner forms in Section \ref{LLC6}.
Furthermore, we describe the possible sizes of $L$-packets in each case and prove the equality of local factors via the $\L$-map.

\subsection*{Acknowledgements}
This work has benefited from many helpful conversations with Wee Teck Gan and the authors would like to thank him for all his help.
The authors also thank Maarten Solleveld for his feedback and comments on this work. 
The first author was partially supported by a Collaborations Grant (\#245422) from the Simons Foundation. 
The second author was partially supported by an AMS-Simons Travel Grant. 

	\section{The Preliminaries} \label{prelim}

	\subsection{Notations and Conventions} \label{notations}
Let $p$ be a prime number.  We denote by $F$ a $p$-adic field of characteristic $0,$ i.e., a finite extension of $\QQ_{p}.$  
Let $\bar{F}$ be an algebraic closure of $F.$ Denote by $\OO_F$ the ring of integers of $F,$ and by $\pp$ the maximal ideal in $\OO_F.$ 
Let $q$ denote the cardinality of the residue field $\OO_F / \pp.$

We denote by $W_F$ the Weil group of $F$ and by $\Gamma$ the absolute Galois group $\Gal(\bar{F} / F).$ 
Let $G$ be a connected, reductive, linear, algebraic group over $F.$ 
Fixing $\Gamma$-invariant splitting data, we define the $L$-group of $G$ as a semi-direct product 
$^{L}G := \widehat{G} \rtimes \Gamma$ (see \cite[Section 2]{bo79}).

For an integer $i \in \NN$ and a connected, reductive, algebraic group $G$ over $F,$ we set 
\[ H^i(F, G) := H^i(\Gal (\bar{F} / F), G(\bar{F})), \] 
the Galois cohomology of $G.$ For any topological group $H,$ 
we denote by $\pi_0(H)$ the group $H/H^\circ$ of connected components of $H,$ where $H^\circ$ denotes the identity component of $H.$  
By $Z(H)$ we will denote the center of $H.$ 
We write $H^D$ for the group $\Hom(H , \CC^{\times})$ of all continuous characters. 
Also, $H_{\der}$ denotes the derived groups of $H.$ 
We denote by $\mathbbm{1}$ the trivial character. 
The cardinality of a finite set $S$ is denoted by $|S|.$ 
For two integers $x$ and $y,$ $x \big{|} y$ means that $y$ is divisible by $x.$
For any positive integer $n,$ we denote by $\mu_n$ the algebraic group such that $\mu_n(R) = \{ r \in R : r^n = 1 \}$ for any $F$-algebra $R.$

Given connected reductive algebraic groups $G$ and $G'$ over $F,$ we say that $G$ and $G'$ are 
\textit{$F$-inner forms} with respect to an $\bar{F}$-isomorphism $\varphi: G' \rightarrow G$ if 
$\varphi \circ \tau(\varphi)^{-1}$ is an inner automorphism ($g \mapsto xgx^{-1}$) defined over $\bar{F}$ 
for all $\tau \in \Gal (\bar{F} / F)$ (see \cite[2.4(3)]{bo79}, \cite[p.280]{kot97}).  
We often omit the references to $F$ and $\varphi$ when there is no danger of confusion.
We recall that if two $F$-inner forms $G$ and $G'$ are quasi-split over $F$, then $G$ and $G'$ are isomorphic over $F,$ \cite[Remarks 2.4(3)]{bo79}.

When $G$ and $G'$ are inner forms of each other, we have $^L{G} \s {^LG'}$ \cite[Section 2.4(3)]{bo79}. 
In particular, if $G'$ is an inner form of an $F$-split group $G$ with the action of $\Gamma$ on $\widehat{G}$ trivial, 
we write $^L{G} = \widehat{G} \s {^LG'} = \widehat{G'}.$

For positive integers $m,$ $n,$ and $d,$ we let $D$ be a central division algebra of dimension $d^2$ over $F$ (possibly $D=F,$ in which case $d=1$). 
Let $\GL_{m}(D)$ 
denote the group of all invertible elements of $m \times m$ matrices over $D.$  
Let $\SL_m(D)$ be the subgroup of elements in $\GL_m(D)$ with reduced norm $\Nrd$ equal to $1.$ 
Note that there are algebraic groups over $F,$ 
whose groups of $F$-points are respectively $\GL_m(D)$ and $\SL_m(D).$ 
By abuse of notation, we shall write $\GL_m(D)$ and $\SL_m(D)$ for their algebraic groups over $F$ as well. 
Note that any $F$-inner forms of the split general linear group $\GL_n$ and the split special linear group $\SL_n$ 
are respectively of the form ${\GL}_m(D)$ and ${\SL}_m(D),$ 
where $n=md$ (see \cite[Sections 2.2 \& 2.3]{pr94}).

	\subsection{Group Structures} \label{gp structure}
In this section we describe the structure of the split groups $\GSpin_4$ and $\GSpin_6$ over $F$ 
as well as their non quasi-split $F$-inner forms.  These are the groups we work with in this paper.  
The exact knowledge of the structure of these groups allow us, on the one hand, to use techniques 
from restriction of representations to construct $L$-packets and, on the other hand, make use of 
generic local transfer from the general spin groups to general linear groups in order to prove preservation 
of local $L$-, $\epsilon$-, and $\gamma$-factors for our $L$-packets. 

		\subsubsection{Split Groups}
We first give a description of the split groups $\GSpin_4$ and $\GSpin_6$ in terms of abstract root data.  Let 
\[ X_{2n} = \ZZ e_0 \oplus \ZZ e_1 \oplus \cdots \oplus \ZZ e_n \]
and let 
\[ X_{2n}^\vee = \ZZ e_0^* \oplus \ZZ e_1^* \oplus \cdots \oplus \ZZ e_{2n}^* \] 
be the dual $\ZZ$-module with the standard $\ZZ$-pairing between them. We let 
\[ \Delta_{2n} = \{ \alpha_1 = e_1 - e_2, \dots, \alpha_{n-1} = e_{n-1} - e_n, \alpha_n = e_{n-1} + e_n \} \]  
and 
\[ \Delta_{2n}^\vee = \{\alpha^\vee_1 = e_1^* - e_2^*, \dots \alpha^\vee_{n-1} = e_{n-1}^* - e_n^*, \alpha^\vee_n = e_{n-1}^* + e_n^* - e_0^*\} \] 
denote the simple roots and coroots, respectively, and let $R_{2n}$ and $R^\vee_{2n}$ be the roots and coroots they generate.  Then 
\[ \Psi_{2n} = (X_{2n}, R_{2n}, \Delta_{2n}, X_{2n}^\vee, R^\vee_{2n}, \Delta_{2n}^\vee) \] 
is a based root datum determining the split, connected, reductive group $\GSpin_{2n}$ over $F.$  
See \cite{acsh, ash14manuscripta} for more details.  We are mostly interested in $\Psi_4$ and $\Psi_6$ in this paper.

In fact, the derived group of $\GSpin_{2n}$ is isomorphic to $\Spin_{2n}$ and we have \cite[Proposition 2.2]{ash06duke} 
the following isomorphism of algebraic groups over $F:$  
\begin{equation} \label{GSpin-qt}
\GSpin_{2n} \s (\GL_1 \times \Spin_{2n}) / \{(1, 1), (-1, c)\}, 
\end{equation} 
where $c$ denotes the non-trivial element in the center of $\Spin_{2n}$ given by 
\[ c = \alpha^\vee_{n-1}(-1) \alpha^\vee_{n}(-1) = e_0^*(-1)^{-1} = e_0^*(-1) \] 
in our root data notation. 

When $n=2$ or $3$ we have the accidental isomorphisms 
\begin{eqnarray*} 
\Spin_4 &\s& \SL_2 \times \SL_2 \\
\Spin_6 &\s& \SL_4 
\end{eqnarray*}  
as algebraic groups over $F.$  The element $c$ will then be identified with $(-I_2, -I_2) \in \SL_2 \times \SL_2$ and 
$-I_4 \in \SL_4,$ respectively.  Therefore, we have the following isomorphisms of algebraic groups over $F:$ 

\begin{eqnarray}
\GSpin_4  &\s&  (\GL_1 \times \SL_2 \times \SL_2) / \left\{(1, I_2, I_2), (-1, -I_2, -I_2)  \right\},  \\
\GSpin_6  &\s& ({\GL}_1 \times {\Spin}_6) / \left\{(1, I_4), (-1, -I_4) \right\} .
\end{eqnarray}

For our purposes here, the following is a more convenient description of these two groups. 
\begin{pro} \label{sio for splits}
As algebraic groups over $F$ we have the following isomorphisms: 
\begin{equation} \label{convenient iso GSpin4}
{\GSpin}_4  \s \{ (g_1, g_2) \in {\GL}_2 \times {\GL}_2 : \det g_1 = \det g_2 \}
\end{equation}
\begin{equation} \label{convenient iso GSpin6}
{\GSpin}_6  \s \{ (g_1, g_2) \in {\GL}_1 \times {\GL}_4 : g_1^2 = \det g_2 \}.
\end{equation}
\end{pro}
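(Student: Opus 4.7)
The plan is to build explicit $F$-morphisms from the presentations
\[ \GSpin_4 \cong (\GL_1 \times \SL_2 \times \SL_2)/\{(1, I_2, I_2), (-1, -I_2, -I_2)\} \]
and
\[ \GSpin_6 \cong (\GL_1 \times \SL_4)/\{(1, I_4), (-1, -I_4)\} \]
already established via \eqref{GSpin-qt} and the accidental isomorphisms $\Spin_4 \cong \SL_2 \times \SL_2$ and $\Spin_6 \cong \SL_4$, and to verify that these morphisms descend to the isomorphisms claimed in \eqref{convenient iso GSpin4} and \eqref{convenient iso GSpin6}.

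Concretely, I would define
\[ \Phi_4 : \GL_1 \times \SL_2 \times \SL_2 \longrightarrow \GL_2 \times \GL_2, \quad (\lambda, h_1, h_2) \longmapsto (\lambda h_1, \lambda h_2), \]
\[ \Phi_6 : \GL_1 \times \SL_4 \longrightarrow \GL_1 \times \GL_4, \quad (\lambda, h) \longmapsto (\lambda^2, \lambda h). \]
The identity $\det(\lambda h_i) = \lambda^2$ shows that the image of $\Phi_4$ lies in $H_4 := \{(g_1, g_2) : \det g_1 = \det g_2\}$, and $\det(\lambda h) = \lambda^4 = (\lambda^2)^2$ shows that the image of $\Phi_6$ lies in $H_6 := \{(g_1, g_2) : g_1^2 = \det g_2\}$. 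A direct calculation of the kernels recovers exactly the central $\mu_2$ that appears in each quotient presentation: for $\Phi_4$, the conditions $\lambda h_i = I_2$ force $\lambda^2 = 1$ and then $h_i = \lambda I_2$, producing $\{(1, I_2, I_2), (-1, -I_2, -I_2)\}$; the analogous calculation for $\Phi_6$ yields $\{(1, I_4), (-1, -I_4)\}$.

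For surjectivity I would argue on $\bar F$-points, which suffices since the numerators and $H_4, H_6$ are all smooth $F$-group schemes and the morphisms are $F$-rational. Given $(g_1, g_2) \in H_4(\bar F)$ with $\det g_1 = d$, pick $\lambda \in \bar F^\times$ with $\lambda^2 = d$ and set $h_i := \lambda^{-1} g_i \in \SL_2(\bar F)$; for $(g_1, g_2) \in H_6(\bar F)$, pick $\lambda$ with $\lambda^2 = g_1$ and set $h := \lambda^{-1} g_2$, which lies in $\SL_4(\bar F)$ because $\det h = \lambda^{-4} \det g_2 = g_1^{-2} g_1^2 = 1$. Each $\Phi_j$ is therefore surjective on $\bar F$-points with the prescribed kernel, so the induced maps from the quotients are isomorphisms of $F$-algebraic groups.

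The only real input is choosing the correct homomorphism in each case; the kernel computation and geometric surjectivity are then bookkeeping. I do not anticipate any serious obstacle, since all the structural work—including \eqref{GSpin-qt} and the identification of the nontrivial central element $c$ as $(-I_2, -I_2)$ and $-I_4$ under the accidental isomorphisms—has already been recorded in the paragraphs preceding the proposition.
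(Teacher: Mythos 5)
Your proof is correct, but it takes a genuinely different route from the paper. The paper proves Proposition \ref{sio for splits} by a direct comparison of based root data: it computes the character and cocharacter lattices of the maximal torus of $G_4=\{(g_1,g_2):\det g_1=\det g_2\}$ (resp.\ $G_6$), and exhibits explicit matrices $S$, $S^\vee$ realizing an isomorphism with $\Psi_4$ (resp.\ $\Psi_6$); as a byproduct it records explicit formulas for the $e_i$ and $e_i^*$ in the matrix realization. You instead start from the quotient presentation \eqref{GSpin-qt} combined with the accidental isomorphisms, and build the central isogenies $(\lambda,h_1,h_2)\mapsto(\lambda h_1,\lambda h_2)$ and $(\lambda,h)\mapsto(\lambda^2,\lambda h)$, checking image, kernel, and surjectivity on $\bar F$-points; in characteristic zero (everything smooth, kernels reduced) this correctly yields isomorphisms from the stated quotients onto $H_4$ and $H_6$. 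The trade-off is that your argument hinges on the identification of the central element $c$ with $(-I_2,-I_2)$ and $-I_4$ under $\Spin_4\cong\SL_2\times\SL_2$ and $\Spin_6\cong\SL_4$, which the paper asserts before the proposition but does not prove; since $Z(\Spin_4)$ has three order-two subgroups and $Z(\Spin_6)\cong\mu_4$, quotienting by the wrong one would give a different group, so a fully self-contained version of your proof should verify this identification (e.g.\ via the kernel of the vector representation, $c=e_0^*(-1)$). The paper's root-datum computation is longer but avoids that input entirely, while your isogeny argument is shorter, more conceptual, and makes the exact sequences \eqref{convenient exact sequence GSpin4} and \eqref{convenient exact sequence GSpin6} immediate.
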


\begin{proof}
We verify these isomorphisms by giving explicit isomorphisms between the respective root data. 
For (\ref{convenient iso GSpin4}) consider $\GL_2 \times \GL_2$ and let $T_1$ be its usual maximal split torus, 
with $T \subset T_1$ denoting the maximal split torus of  
\[ G_4 = \left\{ (g_1, g_2) \in {\GL}_2 \times {\GL}_2 : \det g_1 = \det g_2 \right\}. \]  
Using the notation $f_{ij}$ and $f^*_{ij},$ $1 \le i, j \le 2,$ for the usual $\ZZ$-basis of characters 
and cocharacters of 
$\GL_2 \times \GL_2$ with the standard $\ZZ$-pairing between them, the character lattice $X^*(T),$ a quotient of $X^*(T_1),$ and the cocharacter lattice $X_*(T),$ 
a sublattice of $X_*(T_1),$ can be given by 
\[ X^*(T)  = \frac {\ZZ \langle f_{11}, f_{12} \rangle \oplus \ZZ \langle f_{21}, f_{22} \rangle}
 {\ZZ \langle f_{11} + f_{12} - f_{21} - f_{22} \rangle} \] 
and 
\[ X_*(T)  = \langle f_{11} + f_{12} - f_{21} - f_{22} \rangle ^\perp. \] 
Here $\perp$ means orthogonal complement with respect to the $\ZZ$-pairing.  

Let $R$ and $\Delta$ denote the roots and simple roots in $G_4$ and let $R^\vee$ and $\Delta^\vee$ be coroots and simple coroots.  
An isomorphism of based root data $\Psi_4 \longrightarrow \Psi(G_4) = (X^*(T), R, \Delta, X_*(T), R^\vee, \Delta^\vee)$ amounts to isomorphisms of $\ZZ$-modules  
\[ \iota : X_4 \longrightarrow X^*(T) \quad \mbox {and } \quad \iota^\vee : X_*(T) \longrightarrow X_4^\vee \] 
such that 
\begin{eqnarray*} 
\langle \iota(x), y \rangle = \langle x, \iota^\vee(y) \rangle, &\quad x \in X_4, y \in X_*(T), \\
\iota(\Delta_4) = \Delta, &\mbox{ and } \quad \iota^\vee(\Delta^\vee) = \Delta_4^\vee 
\end{eqnarray*} 
such that the following diagram commutes:
\[ \begin{CD}
R_4 @>\vee>> R_4^\vee\\
@V\iota VV @AA{\iota^\vee}A\\
R @>\vee>> R^\vee
\end{CD} \]

Setting 
\[ \Delta = \left\{\beta_1 = f_{11} - f_{12} , \beta_2 = -f_{11} - f_{12} + 2 f_{21} \right\}  (\operatorname{mod} f_{11} + f_{12} - f_{21} - f_{22}) \] 
and 
\[ \Delta^\vee = \left\{ \beta^\vee_1 = f^*_{11} - f^*_{12}, 
\beta^\vee_2 = f^*_{21} - f^*_{22} \right\}, \]
let $S$ denote the $3 \times 3$ matrix of $\iota$ with respect to the $\ZZ$-bases $(e_0, e_1, e_2)$ of $X_4$ and 
$(f_{11}, f_{12}, f_{21}) (\operatorname{mod} f_{11} + f_{12} - f_{21} - f_{22})$ of $X^*(T),$ respectively. Similarly, let $S^\vee$ denote the matrix of 
$\iota^\vee$ with respect to the $\ZZ$-bases $(f^*_{11}+f^*_{22}, f^*_{12}+f^*_{22}, -f^*_{21}+f^*_{22})$ of $X_*(T)$ and 
$(e^*_0, e^*_1, e^*_2)$ of $X^\vee_4,$ respectively.  Assuming that $\iota(\alpha_1) = \beta_1$ and $\iota(\alpha_2) = \beta_2$ along with the 
conditions for root data isomorphisms we detailed above, plus the requirement $\det S = 1$ gives, after some computations, a unique choice for 
the $\ZZ$-isomorphisms $\iota$ and $\iota^\vee$ given by  
\begin{equation*}
S = 	\begin{bmatrix}
	0 & 0 & -1 \\
	0 & -1 & 0 \\
	-1 & 1 & 1
	\end{bmatrix}
\quad \mbox{ and } \quad 
S^\vee = {}^tS.
\end{equation*} 
(Alternatively, we could have assumed that $\det S = -1$ or that $\iota(\alpha_1) = \beta_2,$ etc., which would have led to slightly different matrices $S$ 
and $S^\vee$.) 
This shows that $G_4$ is indeed a realization, as an algebraic group over $F,$ for $\GSpin_4.$  

In fact, using $S$ and $S^\vee$ above, we can arrive at the following explicit realizations. For 
\[ t = \left( \begin{bmatrix} a& \\  &b \end{bmatrix} , \begin{bmatrix} c& \\  &d \end{bmatrix} \right) \in T,  \quad \mbox{(with $ab = cd$)} \] 
we have, 
\[ e_1(t) = c/b, \quad e_2(t) = c/a, \quad e_0(t) = 1/c. \] 
Similarly, for $\lambda \in \GL_1,$ we have 
\begin{eqnarray*} 
e^*_1(\lambda) &=& \left( \begin{bmatrix} 1& \\  &\lambda^{-1} \end{bmatrix} , \begin{bmatrix} 1& \\  &\lambda^{-1} \end{bmatrix} \right) \\
e^*_2(\lambda) &=& \left( \begin{bmatrix} 1& \\  &1 \end{bmatrix} , \begin{bmatrix} \lambda^{-1}& \\  &\lambda \end{bmatrix} \right) \\
e^*_0(\lambda) &=& \left( \begin{bmatrix} \lambda^{-1}& \\  &\lambda^{-1} \end{bmatrix} , \begin{bmatrix} \lambda^{-1}& \\  &\lambda^{-1} \end{bmatrix} \right).
\end{eqnarray*}

The proof for the isomorphism between $\GSpin_6$ and $G_6 = \left\{ (g_1, g_2) \in {\GL}_1 \times {\GL}_4 : g_1^2 = \det g_2 \right\}$ is similar. 
With notations $f_0$ and $f_1, f_2, f_3, f_4$ and their duals for the characters and cocharacters of $T_1,$ the split torus in $\GL_1 \times \GL_4,$ 
which contains the split torus $T$ of $G_4,$ we can write 
\[ X^*(T)  = \frac {\ZZ f_0 \oplus \ZZ \langle f_1, f_2, f_3, f_4 \rangle}
 {\ZZ \langle 2 f_0 - f_1 - f_2 - f_3 - f_4 \rangle} \] 
and 
\[ X_*(T)  = \langle 2 f_0 - f_1 - f_2 - f_3 - f_4 \rangle ^\perp. \] 

Again, we set  
\[ \Delta = \left\{\beta_1 = f_2 - f_3 , \beta_2 = f_1 - f_2, \beta_3 = f_3 - f_4 \right\}  (\operatorname{mod} 2 f_0 - f_1 - f_2 - f_3 - f_4) \] 
and 
\[ \Delta^\vee = \left\{ \beta^\vee_1 = f^*_2 - f^*_3, \beta^\vee_2 = f^*_1 - f^*_2, \beta^\vee_3 = f^*_3 - f^*_4 \right\}. \]
Now, we let $S$ denote the matrix of $\iota$ with respect to $(e_0, e_1, e_2, e_3)$ and 
$(f_0, f_1, f_2, f_3) (\operatorname{mod} 2 f_0 - f_1 - f_2 - f_3 - f_4)$ and let $S^\vee$ denote the matrix of $\iota^\vee$ with respect to 
$(f^*_0+2 f^*_4, f^*_1 - f^*_4, f^*_2 - f^*_4, f^*_3 - f^*_4)$ and 
$(e^*_0, e^*_1, e^*_2, e^*_3).$ Assuming $\iota(\alpha_i) = \beta_i$ for $1 \le i \le 3$ and the conditions for the root data isomorphim along with the 
requirement $\det S = 1$ we get, after similar computations, that  
\begin{equation*}
S = 	\begin{bmatrix}
	1 & -1 & -1 & -1 \\
	-1 & 1 & 1 & 0 \\
	-1 & 1 & 0 & 1 \\
	-1 & 0 & 1 & 1 \\
	\end{bmatrix}
\quad \mbox{ and } \quad 
S^\vee = {}^tS = S.
\end{equation*}  
This shows that $G_6$ is indeed a realization, as an algebraic group over $F,$ for $\GSpin_6.$  

Again, with the above $S$ we have the following explicit realizations. For 
\[ t = \left( z, \diag(a, b, c, d) \right) \in T \quad \mbox{(with $z^2 = abcd$)} \] 
we have 
\begin{eqnarray*} 
e_1(t) &=& a b z^{-1} \\
e_2(t) &=& a c z^{-1} \\
e_3(t) &=& b c z^{-1} \\
e_0(t) &=& d z^{-1} \\
\end{eqnarray*} 
Similarly, for $\lambda \in \GL_1,$ we have 
\begin{eqnarray*} 
e^*_1(\lambda) &=& \left( \lambda^{-1},  \diag(1,1,\lambda^{-1}, \lambda^{-1}) \right) \\
e^*_2(\lambda) &=& \left( \lambda^{-1},  \diag(1,\lambda^{-1}, 1, \lambda^{-1}) \right) \\
e^*_3(\lambda) &=& \left( \lambda^{-1},  \diag(\lambda^{-1}, 1, 1, \lambda^{-1}) \right) \\
e^*_0(\lambda) &=& \left( \lambda^{-1},  \diag(\lambda^{-1}, \lambda^{-1}, \lambda^{-1}, \lambda^{-1}) \right). \\
\end{eqnarray*}
\end{proof}
 
The proposition implies that we have the following inclusions
\begin{eqnarray}
\label{derived of 4}
{\SL}_2 \times {\SL}_2 \subset&  {\GSpin}_4 \subset& {\GL}_2 \times {\GL}_2  \\
\label{derived of 6}
{\SL}_4  \subset&  {\GSpin}_6 \subset& {\GL}_1 \times {\GL}_4.
\end{eqnarray}
Since 
\begin{eqnarray*}
{\SL}_2 \times {\SL}_2 = ({\GSpin}_4)_{\der} = ({\GL}_2 \times {\GL}_2)_{\der} \\
{\SL}_4  = ({\GSpin}_6)_{\der} = ({\GL}_1 \times {\GL}_4)_{\der}
\end{eqnarray*}
the two groups ${\GSpin}_4$ and ${\GSpin}_6$ fit in the setting (cf. \eqref{cond on G})  
\begin{equation} \label{cond on G-early}
G_{\der} = \tG_{\der} \subseteq G \subseteq \tG,
\end{equation}
with 
\[ G=\GSpin_4, \quad \tG = \GL_2 \times \GL_2 \] 
and 
\[ G=\GSpin_6, \quad \tG = \GL_1 \times \GL_v, \] 
respectively.

Moreover, using the surjective map ${\GL}_2 \times {\GL}_2 \longrightarrow {\GL}_1$ defined by by $(g_1, g_2) \mapsto (\det g_1)(\det g_2)^{-1},$ 
the isomorphism \eqref{convenient iso GSpin4} gives an exact sequence of algebraic groups
\begin{equation} \label{convenient exact sequence GSpin4}
1 \longrightarrow {\GSpin}_4 \longrightarrow {\GL}_2 \times {\GL}_2 \longrightarrow {\GL}_1 \longrightarrow 1.
\end{equation}
Likewise, using the surjective map ${\GL}_1 \times {\GL}_4 \longrightarrow {\GL}_1$ defined by $(g_1, g_2) \mapsto g_1^{-2}(\det g_2),$ 
the isomorphism \eqref{convenient iso GSpin6} yields the exact sequence 
\begin{equation} \label{convenient exact sequence GSpin6}
1 \longrightarrow {\GSpin}_6 \longrightarrow {\GL}_1 \times {\GL}_4 \longrightarrow {\GL}_1 \longrightarrow 1.
\end{equation}

		\subsubsection{Inner Forms}
Using the Satake classification \cite[p.119]{sa71} for admissible diagrams of the $F$-inner forms, we only have two (up to isomorphism) non quasi-split 
$F$-inner forms of ${\GSpin}_4,$ denoted by ${\GSpin}^{2,1}_4$ and ${\GSpin}^{1,1}_4,$ whose diagrams are respectively
\[
\xy
\POS (20,5) *\cir<2pt>{} ="b",
\POS (20,-5) *{\bullet} ="c",
\POS (35,0) *{\text{and}},

\POS (50,5) *{\bullet} ="g",
\POS (50,-5) *{\bullet} ="h",
\endxy
\]
The left diagram gives
\begin{equation} \label{derived of 4 inner 1}
{\SL}_2 \times {\SL}_1(D) \subset {\GSpin}^{2,1}_4 \subset {\GL}_2 \times {\GL}_1(D)
\end{equation}
and the right one gives
\begin{equation} \label{derived of 4 inner 2}
{\SL}_1(D) \times {\SL}_1(D) \subset {\GSpin}^{1,1}_4 \subset {\GL}_1(D) \times {\GL}_1(D).
\end{equation}
Here, $D$ denotes the quaternion division algebra over $F.$  (Recall from Section \ref{notations} that we are writing 
$\GL_m(D)$ and $\SL_m(D)$ for both algebraic groups over $F$ and their $F$-points, by abuse of notation.) 

Similarly, we only have two (up to isomorphism) non quasi-split $F$-inner forms of 
${\GSpin}_6,$ denoted by ${\GSpin}_6^{2,0}$ and ${\GSpin}_6^{1,0},$ whose diagrams are respectively 
\[
\xy
\POS (10,0) *\cir<2pt>{} ="a",
\POS (20,5) *{\bullet} ="b",
\POS (20,-5) *{\bullet} ="c",

\POS (30,0) *{\text{and}},

\POS (40,0) *{\bullet} ="f",
\POS (50,5) *{\bullet} ="g",
\POS (50,-5) *{\bullet} ="h",

\POS "a" \ar@{-}^<<{}_<<<{} "b",
\POS "a" \ar@{-}^<<<{}_<<{} "c",

\POS "f" \ar@{-}^<<{}_<<<{} "g",
\POS "f" \ar@{-}^<<<{}_<<{} "h",
\endxy
\]
The left diagram gives
\begin{equation} \label{derived of 6 inner 1}
{\SL}_2(D) \subset {\GSpin}_6^{2,0} \subset {\GL}_1 \times {\GL}_2(D)
\end{equation}
and the right one gives
\begin{equation} \label{derived of 6 inner 2}
{\SL}_1(D_4) \subset {\GSpin}_6^{1,0} \subset {\GL}_1 \times {\GL}_1(D_4).
\end{equation}
Here, $D_4$ is a division algebra of dimension $16$ over $F.$
We note that the two division algebras $D_4$ and its opposite $D_4^{op}$ of dimension 16 ( with invariants $1/4,$ $-1/4$ in $\QQ/\ZZ$) 
have canonically isomorphic multiplicative groups $D_4^{\times}$ and $(D_4^{op})^{\times}.$

Similar to the split forms $\GSpin_4$ and $\GSpin_6,$ the $F$-inner forms of $\GSpin_4$ and $\GSpin_6$ as well as the $F$-inner forms of 
${\SL}_2 \times {\SL}_2,$ ${\GL}_2 \times {\GL}_2,$ ${\SL}_4,$ and ${\GL}_4$ all satisfy 
the setting \eqref{cond on G-early}.

The arguments giving \eqref{convenient exact sequence GSpin4} and \eqref{convenient exact sequence GSpin6} apply again, with 
$\det$ replaced by the reduced norm $\Nrd,$ to show that for $F$-inner forms 
${\GSpin}^{2,1}_4,$ ${\GSpin}^{1,1}_4,$ ${\GSpin}_6^{2,0},$ and ${\GSpin}_6^{1,0},$ 
we have 
\begin{eqnarray} 
\label{convenient exact sequence 4 inner 1}
1 \longrightarrow & {\GSpin}^{2,1}_4 \longrightarrow & {\GL}_2 \times {\GL}_1(D) \longrightarrow {\GL}_1 \longrightarrow 1, 
\\
\label{convenient exact sequence 4 inner 2}
1 \longrightarrow & {\GSpin}^{1,1}_4 \longrightarrow & {\GL}_1(D) \times {\GL}_1(D) \longrightarrow {\GL}_1 \longrightarrow 1,
\\
\label{convenient exact sequence 6 inner 1}
1 \longrightarrow & {\GSpin}_6^{2,0} \longrightarrow & {\GL}_1 \times {\GL}_2(D) \longrightarrow {\GL}_1 \longrightarrow 1,
\\
\label{convenient exact sequence 6 inner 2}
1 \longrightarrow & {\GSpin}_6^{1,0}\longrightarrow & {\GL}_1 \times {\GL}_1(D_4) \longrightarrow {\GL}_1 \longrightarrow 1.
\end{eqnarray}
Using \eqref{convenient exact sequence 4 inner 1} -- \eqref{convenient exact sequence 6 inner 2}, we have the following isomorphisms of algebraic groups over $F,$ which are analogues of Proposition \ref{sio for splits}.
\begin{eqnarray*} 
&{\GSpin}^{2,1}_4 \s & \{ (g_1,g_2) \in {\GL}_2 \times {\GL}_1(D) : \det g_1 = \Nrd g_2 \}, 
\\
&{\GSpin}^{1,1}_4 \s & \{ (g_1,g_2) \in {\GL}_1(D) \times {\GL}_1(D) : \Nrd g_1 = \Nrd g_2 \},
\\
&{\GSpin}_6^{2,0} \s & \{ (g_1,g_2) \in {\GL}_1 \times {\GL}_2(D): g^2_1 = \Nrd g_2 \},
\\
&{\GSpin}_6^{1,0} \s & \{ (g_1,g_2) \in {\GL}_1 \times {\GL}_1(D_4): g^2_1 = \Nrd g_2 \}.
\end{eqnarray*}

Finally, recall \cite[Proposition 2.3]{ash06duke} that we have
\begin{equation} \label{centers}
\pi_0(Z({\GSpin}_4)) \s \pi_0(Z({\GSpin}_6)) \s \ZZ/2\ZZ.
\end{equation}
This also holds for their $F$-inner forms since $F$-inner forms have the same center.

	\subsection{The $F$-points}  \label{F points} 
We now describe the $F$-rational points of the split groups $\GSpin_4$ and $\GSpin_6$ as well as their non quasi-split inner forms.  
We start with the following. 

\begin{lm} \label{GC for GSpin}
Let $G$ be an $F$-inner form of ${\GSpin}_{2n}$ with $n \ge 1$ an integer. Then 
\[
H^1(F, G) = 1.
\]
\end{lm}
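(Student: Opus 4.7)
The plan is to use the standard short exact sequence
\begin{equation*}
1 \longrightarrow G_{\der} \longrightarrow G \longrightarrow G/G_{\der} \longrightarrow 1
\end{equation*}
together with two classical vanishing results for Galois cohomology over $p$-adic fields. Since any inner twisting acts trivially on the abelianization of a connected reductive group, the quotient $G/G_{\der}$ for an $F$-inner form $G$ of $\GSpin_{2n}$ coincides with that of the split group, namely $\GL_1$ (realized via the similitude character). At the same time, $G_{\der}$ is an $F$-inner form of $\Spin_{2n}$, hence a simply connected semisimple $F$-group.

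Next I would pass to the long exact sequence of Galois cohomology of pointed sets and isolate the relevant portion
\begin{equation*}
H^1(F, G_{\der}) \longrightarrow H^1(F, G) \longrightarrow H^1(F, \GL_1).
\end{equation*}
The right-hand term vanishes by Hilbert 90, and the left-hand term vanishes by the theorem of Kneser (extended in full generality by Bruhat and Tits) which asserts that $H^1(F, H) = 1$ for any simply connected semisimple group $H$ over a non-archimedean local field of characteristic zero. Exactness then forces $H^1(F, G) = 1$, which is precisely the claim.

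No serious obstacle is anticipated: the only non-elementary inputs are the two vanishing theorems, both classical and well-documented. For the small-rank cases $n = 2, 3$ actually needed for the rest of the paper, a more hands-on alternative is available directly from the explicit short exact sequences \eqref{convenient exact sequence GSpin4}--\eqref{convenient exact sequence 6 inner 2}: combine $H^1(F, \GL_m(D)) = 1$ (Hilbert 90 for central simple algebras) with the surjectivity of the reduced norm $\Nrd \colon \GL_m(D)(F) \to F^\times$ over $p$-adic fields to obtain the same conclusion. However, the derived-group approach sketched above has the advantage of handling every $n \ge 1$ uniformly, which matches the generality in which the lemma is stated.
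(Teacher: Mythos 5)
Your proof is correct, but it follows a genuinely different route from the paper. The paper's own argument stays entirely on the dual side: it invokes the Kottwitz isomorphism $H^1(F,G) \cong \pi_0\bigl(Z(\widehat{G})^{\Gamma}\bigr)^D$, computes $Z(\widehat{\GSpin_{2n}})^{\Gamma} = Z(\GSO_{2n}(\CC)) \cong \CC^{\times}$, and concludes that the component group is trivial; since inner forms share the same $L$-group, all inner forms are handled at once with no case analysis. Your argument instead works on the group side via $1 \to G_{\der} \to G \to G/G_{\der} \to 1$: the key points — that inner twisting leaves the abelianization $\cong \GL_1$ untouched, that $G_{\der}$ is an inner form of $\Spin_{2n}$ and hence simply connected semisimple, and that the pointed-set exact sequence together with Kneser's vanishing and Hilbert 90 forces $H^1(F,G)=1$ — are all sound (the pointed-set subtleties are harmless here, since triviality of both outer terms does pin down the middle one). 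What each approach buys: the Kottwitz computation is a one-line verification and meshes with the component-group formalism used throughout the paper, whereas your argument is more classical and self-contained, relying only on Kneser's theorem and Hilbert 90 — inputs the paper itself already quotes (it cites $H^1(F,\Spin_n)=1$ and $H^1(F,\GL_n)=1$ from Platonov--Rapinchuk immediately after the lemma). Your low-rank alternative via \eqref{convenient exact sequence GSpin4}--\eqref{convenient exact sequence 6 inner 2} is also fine, provided one uses the algebraic-group sequences (not their $F$-point versions, which in the paper are deduced from this very lemma) together with surjectivity of $\det$, respectively $\Nrd$, on $F$-points, exactly as you indicate.
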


\begin{proof}
We have \cite[Propositions 2.2 and 2.10]{ash06duke} 
\[
\widehat{{\GSpin}_{2n}}={\GSO}_{2n}(\CC)
\]
and 
\[
Z(\widehat{{\GSpin}_{2n}})^{\Gamma} = Z({\GSO}_{2n}(\CC)) \s \CC^{\times}.
\]
Here, $\widehat{\ }$ denotes the connected component of the $L$-group.  Applying the Kottwitz isomorphism \cite[Theorem 1.2]{kot86}, we can conclude 
\[
H^1(F, {\GSpin}_{2n}) \s \pi_0(Z(\widehat{{\GSpin}_{2n}})^{\Gamma}) ^D = \pi_0(\CC^{\times}) ^D= 1.
\]
Since the Kottwitz isomorphism is valid for any connected reductive algebraic group over a $p$-adic field of characteristic $0,$ and since $F$-inner forms of algebraic groups share the same $L$-groups (cf. Section \ref{notations}), the proof is complete.
\end{proof}

Also, recall that $H^1(F, \GL_n)=1$ and $H^1(F, \Spin_n)=1$ for $n \ge 1$ (see \cite[Lemma 2.8 and Theorem 6.4]{pr94}, for example).   
It follows from \eqref{GSpin-qt} that the group $\GSpin_4(F)$ can be described via  
\[
1 \longrightarrow \{ \pm1\} \longrightarrow F^{\times} \times {\Spin}_4(F) \longrightarrow {\GSpin}_4(F) \longrightarrow H^1(F, \{ \pm1\}) \longrightarrow 1
\] 
or 
\[
1 \longrightarrow (F^{\times} \times {\Spin}_4(F))/\{ \pm1\} \longrightarrow  {\GSpin}_4(F) \longrightarrow F^{\times}/(F^{\times})^2 \longrightarrow 1.
\]
Likewise, the group $\GSpin_6(F)$ can be described via 
\[
1 \longrightarrow \{ \pm1 \} \longrightarrow F^{\times} \times {\Spin}_6(F) \longrightarrow {\GSpin}_6(F) \longrightarrow H^1(F, \{ \pm1\}) \longrightarrow 1
\] 
or
\[
1 \longrightarrow (F^{\times} \times {\Spin}_6(F))/\{ \pm1\} \longrightarrow  {\GSpin}_6(F) \longrightarrow F^{\times}/(F^{\times})^2 \longrightarrow 1.
\]

Using Lemma \ref{GC for GSpin}, we apply Galois cohomology to \eqref{convenient exact sequence GSpin4} to obtain 
\begin{equation} \label{GSpin4-F}
1 \longrightarrow {\GSpin}_4(F) \longrightarrow {\GL}_2(F) \times {\GL}_2(F) \longrightarrow F^{\times} \longrightarrow 1.
\end{equation}
Thus, we have 
\begin{equation} \label{convenient description of GSpin4(F)}
{\GSpin}_4(F) \s \{(g_1, g_2) \in {\GL}_2(F) \times {\GL}_2(F) : \det g_1 = \det g_2 \}.
\end{equation}
Likewise, using Lemma \ref{GC for GSpin} and \eqref{convenient exact sequence GSpin6} we obtain 
\begin{equation} 
1 \longrightarrow {\GSpin}_6(F) \longrightarrow {\GL}_1(F) \times {\GL}_4(F) \longrightarrow F^{\times} \longrightarrow 1.
\end{equation}
Thus, we have 
\begin{equation} \label{convenient description of GSpin6(F)}
{\GSpin}_6(F) \s \{(g_1, g_2) \in {\GL}_1(F) \times {\GL}_2(F) : (g_1)^{-2} = \det g_2 \}.
\end{equation}
Similarly, using \eqref{convenient exact sequence 4 inner 1} -- \eqref{convenient exact sequence 6 inner 2} we get 
\begin{eqnarray*} 
1 \longrightarrow & {\GSpin}^{2,1}_4(F) \longrightarrow & {\GL}_2(F) \times {\GL}_1(D) \longrightarrow  F^{\times} \longrightarrow 1,
\\
1 \longrightarrow & {\GSpin}^{1,1}_4(F) \longrightarrow & {\GL}_1(D) \times {\GL}_1(D) \longrightarrow  F^{\times} \longrightarrow 1,
\\
1 \longrightarrow & {\GSpin}_6^{2,0}(F) \longrightarrow & F^{\times} \times {\GL}_2(D)  \longrightarrow  F^{\times} \longrightarrow 1,
\\
1 \longrightarrow & {\GSpin}_6^{1,0}(F) \longrightarrow & F^{\times} \times {\GL}_1(D_4) \longrightarrow  F^{\times} \longrightarrow 1.
\end{eqnarray*}
We thus have
\begin{eqnarray*} 
&{\GSpin}^{2,1}_4(F) \s & \{ (g_1,g_2) \in {\GL}_2(F) \times {\GL}_1(D) : \det g_1 = \Nrd g_2 \}, 
\\
&{\GSpin}^{1,1}_4(F) \s & \{ (g_1,g_2) \in {\GL}_1(D) \times {\GL}_1(D) : \Nrd g_1 = \Nrd g_2 \},
\\
&{\GSpin}_6^{2,0}(F) \s & \{ (g_1,g_2) \in F^{\times} \times {\GL}_2(D): g^2_1 = \Nrd g_2 \},
\\
&{\GSpin}_6^{1,0}(F) \s & \{ (g_1,g_2) \in F^{\times} \times {\GL}_1(D_4): g^2_1 = \Nrd g_2 \}.
\end{eqnarray*}

	\subsection{$L$-groups} \label{L-groups}
We recall the following descriptions of the $L$-groups of (the split groups) $\GSpin_4$ and $\GSpin_6$ 
from \cite[Proposition 2.2]{ash06duke} and \cite[Sections 3 and 4]{gt}:
\begin{equation} \label{L-gp for GSpin4}
{^L{\GSpin}_4} = \widehat{{\GSpin}_4}  = {\GSO}_{4}(\CC) \s  ({\GL}_2(\CC) \times {\GL}_2(\CC))  / \{ (z^{-1}, z) : z \in \CC^{\times}   \},  
\end{equation}
\begin{equation} \label{L-gp for GSpin6}
{^L{\GSpin}_6}  = \widehat{{\GSpin}_6}  = {\GSO}_{6}(\CC)  \s  ({\GL}_1(\CC) \times {\GL}_4(\CC))  / \{ (z^{-2}, z) : z \in \CC^{\times}   \}.
\end{equation}
This immediately gives 
\begin{equation} \label{ex seq L-gp for GSpin4}
1 \longrightarrow \CC^{\times} \longrightarrow {\GL}_2(\CC) \times {\GL}_2(\CC) \overset{pr_4}{\longrightarrow} \widehat{{\GSpin}_4} \longrightarrow 1 
\end{equation} 
and 
\begin{equation} \label{ex seq L-gp for GSpin6}
1 \longrightarrow \CC^{\times} \longrightarrow {\GL}_1(\CC) \times {\GL}_4(\CC) \overset{pr_6}{\longrightarrow} \widehat{{\GSpin}_6} \longrightarrow 1. 
\end{equation}
Here, $\CC^{\times}$ is considered as $\widehat{\GL}_1$ in \eqref{convenient exact sequence GSpin4} and 
\eqref{convenient exact sequence GSpin6}. 
Further, the injection $\CC^{\times} \hookrightarrow {\GL}_2(\CC) \times {\GL}_2(\CC)$ is given by $g \longmapsto (g^{-1}I_2, gI_2)$ 
and the injection $\CC^{\times} \hookrightarrow {\GL}_1(\CC) \times {\GL}_4(\CC)$ is given by $g \longmapsto (g^{-2}, gI_4).$

The inclusions in \eqref{derived of 4} and \eqref{derived of 6} provide the following surjective maps: 
\begin{equation*}
{\GL}_2(\CC) \times {\GL}_2(\CC) \overset{pr_4}{\twoheadrightarrow} \widehat{{\GSpin}_4} 
\twoheadrightarrow {\PGL}_2(\CC) \times {\PGL}_2(\CC) = \widehat{{\SL}_2 \times {\SL}_2}
\end{equation*} 
\begin{equation*}
{\GL}_1(\CC) \times {\GL}_4(\CC) \overset{pr_6}{\twoheadrightarrow} \widehat{{\GSpin}_6} 
\twoheadrightarrow {\PGL}_4(\CC) = \widehat{{\SL}_4}.
\end{equation*} 
Since $F$-inner forms of algebraic groups share the same $L$-groups (cf. Section \ref{notations}), we have 
\begin{align*}
{^L{\GSpin}_4} & = {^L{\GSpin}^{2,1}_4} = {^L{\GSpin}^{1,1}_4}
\\
{^L{\GSpin}_6} & = {^L{\GSpin}_6^{2,0}} = {^L{\GSpin}_6^{1,0}}.
\end{align*}

\section{Local Langlands Conjecture in a General Setting} \label{general LLC}
In this section we quickly review some generalities about the Local Langlands Conjecture (LLC). 
Let $G$ be a connected, reductive, algebraic group over $F.$ 
Write $G(F)$ for the group of $F$-points of $G.$
Let $\Irr(G)$ denote the set of equivalence classes of irreducible, smooth, complex representations of $G(F).$
By abuse of notation, we identify an equivalence class with its representatives. 
We write $\Pi_{\disc}(G)$ and $\Pi_{\temp}(G)$ for the subsets of $\Irr(G)$ consisting, respectively, of 
discrete series and tempered representations.  Moreover, we write $\Pi_{\scup}(G)$ for the subset in $\Irr(G)$ consisting of supercuspidal ones. 
Furthermore, we write 
\[ \
\Pi_{\scup, \unit}(G) = \Pi_{\scup}(G) \cap \Pi_{\disc}(G). 
\]
Note that we have
\begin{equation} \label{rep-include}
\Pi_{\scup, \unit}(G) \subset \Pi_{\disc}(G) \subset \Pi_{\temp}(G) \subset \Irr(G).
\end{equation}
Let $\Phi(G)$ denote the set of $\widehat{G}$-conjugacy classes of $L$-parameters, i.e., admissible homomorphisms 
\[
\vp: W_F \times {\SL}_2(\CC) \longrightarrow {^L}G,
\]
(see \cite[Section 8.2]{bo79}). 
We denote the centralizer of the image of $\vp$ in $\widehat{G}$ by $C_{\vp}.$  The center of $^{L}G$ is the $\Gamma$-invariant group $Z(\widehat{G})^{\Gamma}.$ Note that 
$C_{\vp} \supset Z(\widehat{G})^{\Gamma}.$ 
We say that $\vp$ is elliptic if the quotient group $C_{\vp} / Z(\widehat{G})^{\Gamma}$ is finite, and $\vp$ is tempered if $\vp(W_F)$ is bounded. 
We denote by $\Phi_{\el}(G)$ and $\Phi_{\temp}(G)$ the subset of $\Phi(G)$ which consist, respectively, 
of elliptic and tempered $L$-parameters of $G.$ 
We set 
\[ 
\Phi_{\disc}(G) = \Phi_{\el}(G) \cap \Phi_{\temp}(G). 
\] 
Moreover, we write $\Phi_{\simi}(G)$ for the subset in $\Phi(G)$ consisting of irreducible ones. 
Furthermore, let 
\[ 
\Phi_{\simi, \disc}(G) = \Phi_{\simi}(G) \cap \Phi_{\disc}(G). 
\] 
We then have, in parallel to \eqref{rep-include}, 
\begin{equation} \label{par-include}
\Phi_{\simi, \disc}(G) \subset \Phi_{\disc}(G) \subset \Phi_{\temp}(G) \subset \Phi(G).
\end{equation} 

For any connected reductive group $G$ over $F,$ 
the Local Langlands Conjecture predicts that there is a surjective finite-to-one map
\[
\Irr(G) \longrightarrow \Phi(G).
\]
This map is supposed to satisfy a number of natural properties.  For instance, it preserves 
certain $\gamma$-facotrs, $L$-factors, and $\epsilon$-factors, which one can attach to both sides.
Moreover, considering the fibers of the map, one can partition $\Irr(G)$ into a disjoint union of finite subsets, called the $L$-packets. 
Each packet is conjectured to be characterized by component groups in the $L$-group, which, for groups we are considering in 
this paper, are discussed in Sections \ref{para for GSpin4} and \ref{para for GSpin6}. 
It is also expected that $\Phi_{\disc}(G)$ and $\Phi_{\temp}(G)$ parameterize $\Pi_{\disc}(G)$ and $\Pi_{\temp}(G),$ respectively.

Denote by $\widehat{G}_{\scn}$ the simply connected cover of the derived group $\widehat{G}_{\der}$ of $\widehat{G},$ and by $\widehat{G}_{\ad}$ the quotient group $\widehat{G}/Z(\widehat{G}).$
We consider 
\[
S_{\vp} := C_{\vp} / Z(\widehat{G})^{\Gamma} \subset \widehat{G}_{\ad}.
\]
Write $S_{\vp, \scn}$ for the full pre-image of $S_{\vp}$ in $\widehat{G}_{\scn}.$ 
We then have an exact sequence
\begin{equation} \label{exact isogeny}
1 \longrightarrow Z(\widehat G_{\scn}) \longrightarrow S_{\vp, \scn} \longrightarrow S_{\vp} \longrightarrow 1.
\end{equation}
We let 
\begin{align*}
\cS_{\vp} &:= \pi_0(S_{\vp}) \\
\cS_{\vp, \scn} & := \pi_0(S_{\vp, \scn}) \\
\widehat Z_{\vp, \scn} & := Z(\widehat G_{\scn}) / (Z(\widehat G_{\scn}) \cap S_{\vp, \scn}^{\circ}). 
\end{align*}
We then have (see \cite[(9.2.2)]{art12}) a central extension 
\begin{equation} \label{central ext}
1 \longrightarrow \widehat Z_{\vp, \scn}  \longrightarrow \cS_{\vp, \scn} \longrightarrow \cS_{\vp} \longrightarrow 1. 
\end{equation}

Next, let $G'$ be an $F$-inner form of $G.$
Fix a character $\zeta_{G'}$ of 
$Z(\widehat G_{\scn})$ whose restriction to $Z(\widehat G_{\scn})^{\Gamma}$ corresponds to 
the class of the $F$-inner form $G'$ via the Kottwitz isomorphism \cite[Theorem 1.2]{kot86}.  
We denote by $\Irr(\cS_{\vp, \scn}, \zeta_{G'})$ the set of irreducible representations of $\cS_{\vp, \scn}$ 
with central character $\zeta_{G'}$ on  $Z(\widehat G_{\scn}).$ 
It is expected that, given an $L$-parameter $\vp$ for $G',$ 
there is a bijection between the $L$-packet $\Pi_{\vp}(G')$ associated to $\vp$ and the set 
$\Irr(\cS_{\vp, \scn}, \zeta_{G'})$ \cite[Section 3]{art06}. 
We note that when $G'=G$ the character $\zeta_{G'}$ is the trivial character $\mathbbm{1}$ so that 
\[
\Irr(\cS_{\vp, \scn}, \mathbbm{1}) = \Irr(\cS_{\vp}).
\]
In particular, if $\vp$ is elliptic,  then we have $S_{\vp}=\cS_{\vp}$ and 
$\widehat Z_{\vp, \scn} = Z(\widehat G_{\scn})$
since $C_{\phi} / Z(\widehat{G})^{\Gamma}$ is finite and $Z(\widehat{G})^{\Gamma}$ 
contains $S_{\phi}^{\circ}$ \cite[Lemma 10.3.1]{kot84}.   
Thus the exact sequence \eqref{central ext} turns out to be the same as \eqref{exact isogeny}.

\section{Review of Results on Restriction} \label{results in rest}
In this section, we review several results about restriction.  We refer to \cite{gk82, la85, tad92, hs11} for details.
\subsection{Results of  Gelbart-Knapp, Tadi\'c, and Hiraga-Saito} \label{results gelbart-knapp-tadic-hiraga-saito}
For this section, we let $G$ and $\tG$ be connected, reductive, algebraic groups over $F$ satisfying the property that
\begin{equation} \label{cond on G}
G_{\der} = \tG_{\der} \subseteq G \subseteq \tG,
\end{equation}
where the subscript ``$\der$" stands for the derived group. 
Given $\sigma \in \Irr(G),$ by \cite[Lemma 2.3]{gk82} and \cite[Proposition 2.2]{tad92}, there exists $\ts \in \Irr(\tG)$ such that 
\begin{equation} \label{cond on sigma}
\sigma \hookrightarrow {\Res}_{G}^{\tG}(\ts). 
\end{equation}
In other words, $\sigma$ is an irreducible constituent in the restriction ${\Res}_{G}^{\tG}(\ts)$ of $\ts$ from $\tG(F)$ to $G(F).$
It turns out, \cite[Proposition 2.4 \& Corollary 2.5]{tad92} and \cite[Lemma 2.1]{gk82}, that $\Pi_{\sigma}(G)$ 
is finite and independent of the choice of the lifting $\ts \in \Irr(\tG).$  We write $\Pi_{\sigma}(G) = \Pi_{\ts}(G)$ 
for the set of equivalence classes of all irreducible constituents of ${\Res}_{G}^{\tG}(\ts).$ 
It is clear that for any irreducible constituents $\si_1$ and  $\si_2$ in ${\Res}_{G}^{\tG}(\ts),$ 
we have $\Pi_{\si_1}(G) = \Pi_{\si_2}(G).$

\begin{rem} \label{rem in rest from to}
A member (equivalently all members) of $\Pi_{\ts}(G)$ is supercuspidal, essentially square-integrable, or essentially tempered if and only if 
$\ts$ is (see \cite[Proposition 2.7]{tad92}). 
\end{rem}

We recall that the stabilizer of $\sigma$ in $\tG$ is defined as 
\[
\tG_{\sigma}:= \left\{ \tilde{g} \in \tG(F) : {^{\tilde{g}}{\sigma}} \s \sigma \right\}.
\]
The quotient of $\tG(F)/\tG_{\sigma}$ acts by conjugation on the set $\Pi_{\ts}(G)$ simply transitively (see \cite[Lemma 2.1(c)]{gk82}) 
and there is a bijection between $\tG(F)/\tG_{\sigma}$ and $\Pi_{\ts}(G).$

We also recall the following useful result. 
\begin{pro}(\cite[Lemma 2.4]{gk82}, \cite[Corollary 2.5]{tad92}, and \cite[Lemma 2.2]{hs11}) \label{pro for lifting}
Let $\ts_1$, $\ts_2 \in \Irr(\tG).$  The following statements are equivalent:
\begin{enumerate}[(1)]
  \item  There exists a character $\chi \in (\tG(F) / G(F))^D$ such that $\ts_1 \s \ts_2 \otimes \chi;$
  \item  $\Pi_{\ts_1}(G) \cap \Pi_{\ts_2}(G) \neq \emptyset;$
  \item  $\Pi_{\ts_1}(G) = \Pi_{\ts_2}(G).$
\end{enumerate}
\end{pro}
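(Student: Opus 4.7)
The plan is to establish the cyclic chain $(1) \Rightarrow (3) \Rightarrow (2) \Rightarrow (1)$; the first two implications are essentially formal, while the third carries the real content.

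For $(1) \Rightarrow (3)$: by the very definition of $(\tG(F)/G(F))^D$, the character $\chi$ is trivial on $G(F)$, so $\Res_G^{\tG}(\ts_2 \otimes \chi) \cong \Res_G^{\tG}(\ts_2)$ as $G(F)$-representations, and thus $\ts_1 \cong \ts_2 \otimes \chi$ forces $\Pi_{\ts_1}(G) = \Pi_{\ts_2}(G)$. For $(3) \Rightarrow (2)$: each packet $\Pi_{\ts_i}(G)$ is nonempty by \eqref{cond on sigma}, so their equality immediately yields a nonempty intersection.

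The substantive step is $(2) \Rightarrow (1)$. Fix a common constituent $\sigma \in \Pi_{\ts_1}(G) \cap \Pi_{\ts_2}(G)$. Composing an embedding $\sigma \hookrightarrow \ts_1|_{G(F)}$ with a projection $\ts_2|_{G(F)} \twoheadrightarrow \sigma$ (dual to another such embedding) produces a nonzero intertwiner in $\Hom_{G(F)}(\ts_2, \ts_1)$. Two observations drive the argument. First, the Gelbart-Knapp and Tadi\'c analysis recalled above ensures that each $\ts_i|_{G(F)}$ is a finite semisimple direct sum of irreducibles, so $\Hom_{G(F)}(\ts_2, \ts_1)$ is finite-dimensional by Schur's lemma applied to each isotypic piece. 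Second, this hom space carries a natural smooth action of $\tG(F)/G(F)$ by twisting, and this quotient is abelian since $G_\der = \tG_\der$ forces the algebraic group $\tG/G$ to be abelian; in our setting it is in fact isomorphic to $F^\times$ via the exact sequences of Section \ref{gp structure}.

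To conclude, I would decompose this finite-dimensional smooth representation of $\tG(F)/G(F) \cong F^\times$ into joint eigenspaces: the compact part $\OO_F^\times$ acts through a finite quotient, hence splits as a direct sum of characters, and on each $\OO_F^\times$-isotypic piece a chosen uniformizer acts by an operator with eigenvalues over $\CC$. Any eigencharacter $\chi$ of $F^\times$ thus obtained yields a nonzero element of $\Hom_{\tG(F)}(\ts_2 \otimes \chi, \ts_1)$, which by Schur's lemma for irreducible admissible representations promotes to an isomorphism $\ts_1 \cong \ts_2 \otimes \chi$, establishing (1). The main obstacle is precisely the finite-dimensionality of $\Hom_{G(F)}(\ts_2, \ts_1)$, resting on the finite-length semisimplicity of $\ts_i|_{G(F)}$ supplied by the earlier work of Gelbart-Knapp and Tadi\'c; without this input one could not guarantee the existence of an eigencharacter, which is why the lemma requires a genuine argument rather than pure formalism.
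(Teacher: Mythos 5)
Your proof is correct, and it is essentially the standard argument: the paper itself gives no proof of this proposition, quoting it from Gelbart--Knapp, Tadi\'c, and Hiraga--Saito, and your chain $(1)\Rightarrow(3)\Rightarrow(2)\Rightarrow(1)$ with the twisting action of $\tG(F)/G(F)$ on the finite-dimensional space $\Hom_{G(F)}(\ts_2,\ts_1)$ and the extraction of an eigencharacter is precisely the mechanism behind those references (notably \cite[Lemma 2.2]{hs11}). Two small points deserve a word. First, the statement is made for arbitrary $(G,\tG)$ satisfying \eqref{cond on G}, so you should not lean on $\tG(F)/G(F)\s F^{\times}$: it suffices that the quotient is abelian (commutators of $\tG(F)$ lie in $\tG_{\der}(F)=G_{\der}(F)\subseteq G(F)$), since any abelian group acting on a nonzero finite-dimensional complex space has a common eigenvector (the image of the group algebra is a finite-dimensional commutative $\CC$-algebra, whose simple modules are one-dimensional). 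Second, you assert but do not verify that the action is smooth, equivalently that the resulting eigencharacter $\chi$ is continuous, which is needed since $(\tG(F)/G(F))^{D}$ consists of continuous characters; this is easy but worth recording: choosing finitely many $G(F)$-generators $v_j$ of $V_{\ts_2}$ (possible by the finite-length semisimplicity you invoke) and a compact open subgroup $K\subset\tG(F)$ fixing the $v_j$ and the finitely many vectors $T(v_j)$, one sees that $K$ fixes $T$, so the action is smooth and $\chi$ is trivial on a compact open subgroup, hence continuous. With these observations your argument is complete.
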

Since ${\Res}_{G}^{\tG}(\ts)$ is completely reducible by \cite[Lemma 2.1]{gk82} and \cite[Lemma 2.1]{tad92}, we have the decomposition
\begin{equation} \label{decomp of Res}
{\Res}_{G}^{\tG}(\ts) = m \bigoplus _{\tau \in \Pi_{\ts}(G)}  \tau
\end{equation}
(see \cite[Chapter 2]{hs11}),
where the positive integer $m$ denotes the common multiplicity over $\tau \in \Pi_{\sigma}(G)$ \cite[Lemma 2.1(b)]{gk82}. 
Given $\ts \in \Irr(\tG),$ we define 
\begin{equation} \label{def of I sigma}
I(\ts) := \left\{ \chi \in (\tG(F)/G(F))^D : \ts \otimes \chi \s \ts  \right\}.
\end{equation}
Considering the dimension of the $\CC$-vector space $\End_G({\Res}_{G}^{\tG}(\ts)),$ 
we have (see \cite[Proposition 3.1]{choiymulti}) the equality 
\begin{equation} \label{multi and characters}
m^2 \cdot |\Pi_{\sigma}(G)| = I(\ts).
\end{equation}
Following \cite[Chapter 2]{hs11}, since $\ts \s \ts \otimes \chi$ for $\chi \in I(\ts),$ 
we have a non-zero endomorphism $I_{\chi} \in {\Aut}_{\CC}(V_{\ts})$ such that
\[
I_{\chi} \circ (\ts \otimes \chi)  = \ts \circ I_{\chi}.
\]
For each $z \in \CC^\times,$ we denote by $z \cdot \id_{V_{\ts}}$ the scalar endomorphism $\tilde v \mapsto z \cdot \tilde v$ for $v \in V_{\ts}.$ 
We identify $\CC^\times$ with the subgroup of ${\Aut}_{\CC}(V_{\ts})$ consisting of $z \cdot \id_{V_{\ts}}.$
Define $\mcA(\ts)$ as the subgroup of ${\Aut}_{\CC}(V_{\ts})$ generated by $\{I_{\chi} : \chi \in I(\ts) \}$ 
and $\CC^\times.$ Then the map $I_{\chi} \mapsto \chi$ induces the exact sequence
\begin{equation} \label{exact for CC1}
1 \longrightarrow \CC^\times \longrightarrow \mcA(\ts) \longrightarrow I(\ts) \longrightarrow 1.
\end{equation}
We denote by $\Irr(\mcA(\ts), \id)$ the set of isomorphism classes of irreducible smooth representations 
of the  group $\mcA(\ts)$ such that $z\cdot \id_{V_{\ts}} \in \CC^\times$ acts as the scalar $z.$ 
By \cite[Corollary 2.10]{hs11}, we then have a decomposition
\begin{equation}  \label{useful decomp}
V_{\ts} ~ ~ \s \bigoplus_{\xi \in \Irr(\mcA(\ts), \id)} \xi \boxtimes  \si_{\xi}
\end{equation}
as representations of the direct product $\mcA(\ts) \times G(F).$
 It follows that there is a one-to-one correspondence
\begin{equation}  \label{bij 333}
\Irr(\mcA(\ts), \id) \s \Pi_{\ts}(G),
\end{equation}
sending $\xi \mapsto  \si_{\xi}.$ 
We denote by $\xi_{\si}$ the preimage of $\sigma$ via the correspondence \eqref{bij 333}.

\subsection{A Theorem of Labesse} \label{thm by Labesse}
We recall a theorem of Labesse in \cite{la85} which verifies the existence of a lifting of a given $L$-parameter in the following setting. 
Let $G$ and $\tG$ be connected, reductive, algebraic groups over $F$ with an exact sequence of connected components of $L$-groups
\[
1 \longrightarrow \widehat{S} \longrightarrow \widehat{\tG} \overset{pr}{\longrightarrow}\widehat{G} \longrightarrow 1,
\]
where $\widehat{S}$ is a central torus in $\widehat{\tG},$ 
and the surjective homomorphism $pr$ is compatible with 
$\Gamma$-actions on $\widehat{\tG}$ and $\widehat{G}.$
Then, Labesse proves in \cite[Th\'{e}or\`{e}m 8.1]{la85} that for any $\vp \in \Pi(G),$ there exists $\tvp \in \Pi(\tG)$ such that
\[
\vp = \tvp \circ pr.
\]
We note that the analogous result has been proved in \cite{weil74, he80, gtsp10} for the case $G=SL_n$ and $\tG=GL_n.$ 

\subsection{$L$-packets for Inner Forms of $SL_n$} \label{for sl(m,D)}
In this section we recall some results about the LLC for $\SL_m(D)$ in \cite[Chapter 12]{hs11} and \cite[Section 3]{abps13}.
Throughout this section, let $G(F)=\SL_m(D)$ and $G^*(F)=\SL_n(F),$ where $D$ be a central division algebra of dimension $d^2$ over $F$ with $n=md$ (possibly $D=F,$ in which case $d=1$). 

Since $\Gamma$ acts on $\widehat{G}$ trivially, we shall use $\widehat{G} = {}^LG^0$ instead of $^L G = \widehat{G} \times \Gamma.$  
Note that $\widehat{G}=\widehat{G^*} = \PGL(\CC).$
Let 
\[ 
\varphi : W_F \times SL_2(\CC) \rightarrow \widehat{G}
\] 
be an $L$--parameter. Note that
\[
{Z}(\widehat{G}_{\scn}) = \mathbf{\mu_n}(\CC) \quad \text{and} \quad Z(\widehat{G})^{\Gamma} = 1.
\]
With notation as in Section \ref{general LLC}, we have the exact sequence
\begin{equation} \label{exact seq for S-gps}
1 \longrightarrow \widehat Z_{\vp, \scn}  \longrightarrow \cS_{\vp, \scn} \longrightarrow \cS_{\vp} \longrightarrow 1.
\end{equation}
In the case at hand, $\widehat Z_{\vp, \scn} = \mathbf{\mu_n}(\CC) / (\mathbf{\mu_n}(\CC)  \cap S_{\vp, \scn}^{\circ}).$
In particular, when $\varphi$ is elliptic, 
since both $S_{\vp, \scn}$ in $\SL_n(\CC)$ and $S_{\vp}$ in $\PGL_n(\CC)$ are finite, the exact sequence \eqref{exact seq for S-gps} becomes
\[
1 \longrightarrow \mathbf{\mu_n}(\CC) \longrightarrow S_{\vp, \scn} \longrightarrow S_{\vp} \longrightarrow 1.
\]

Since $G$ is an $F$--inner form of $G^*,$ we can fix a character $\zeta_{G}$ of ${Z}(\widehat{G}_{\scn})$ 
which corresponds to the inner form $G$ of $G^*$ via the Kottwitz isomorphism \cite[Theorem 1.2]{kot86}.
When $D=F$ (i.e., $G=G^*$) we have $\zeta_{G} = \mathbbm{1}.$ 

Consider the exact sequence
\[
1 \longrightarrow \CC^{\times} \longrightarrow  {\GL}_n(\CC) \overset{pr}{\longrightarrow} {\PGL}_n(\CC) = \widehat{G} \longrightarrow 1.
\]
By the argument in Section \ref{thm by Labesse}, we have an 
$L$-parameter $\widetilde{\vp}$ for $\tG$
\[
\widetilde{\vp} : W_F \times {\SL}_n(\CC) \rightarrow {\GL}_n(\CC)
\]
such that ${pr} \circ \widetilde{\vp}=\vp$ (also see \cite{weil74, he80}).
By the LLC for $\tG$ \cite{hs11}, 
we have a unique irreducible representation $\ts \in \Irr(\tG)$ associated to the $L$-parameter $\widetilde{\vp}$ and the $L$-packet $\Pi_{\vp}(G)$ equals the set $\Pi_{\ts}(G)$ defined in Section \ref{results gelbart-knapp-tadic-hiraga-saito}.
By \cite[Lemma 12.6]{hs11} and \cite[Section 3]{abps13}, 
there is a homomorphism 
$\Lambda_{SL_n} : \cS_{\vp, \scn} \rightarrow \mcA(\ts)$ 
(unique up to one-dimensional characters of $\cS_{\vp}$) making the following diagram commute: 

\begin{equation} \label{a diagram}
\begin{CD}
1 @>>> \widehat Z_{\vp, \scn}  @>>> \cS_{\vp, \scn} @>>> \cS_{\vp}  @>>> 1 \\
@. @VV{\zeta_{G}}V @VV{\Lambda_{SL_n}}V @VV{\s}V @.\\
1 @>>> \CC^\times @>>> \mcA(\ts) @>>> I(\ts) @>>> 1.
\end{CD} 
\end{equation}
Combining \eqref{useful decomp} and \eqref{a diagram}, we have the following decomposition
\begin{equation}  \label{crucial decomp}
V_{\ts} ~ ~ \s \bigoplus_{\si \in \Pi_{\vp}(G)} \rho_{\si} \boxtimes  \si \quad
=
\bigoplus_{\rho \in \Irr(\cS_{\vp, \scn}, \zeta_{G})} \rho \boxtimes  \si_{\rho}
\end{equation}
as representations of $\cS_{\vp, \scn} \times G(F).$
Here, $\rho_{\sigma} \in \Irr(\cS_{\vp, \scn}, \zeta_{G})$ is given by $\xi_{\si} \circ \Lambda_{SL_n}$ with $\xi_{\si} \in \Irr(\mcA(\ts), \id),$ and $\si_{\rho} \in \Pi_{\vp}(G) $ denotes the image of $\rho \in \Irr(\cS_{\vp, \scn}, \zeta_{G})$ via the bijection between $\Pi_{\vp}(G)$ and $\Irr(\cS_{\vp, \scn}, \zeta_{G}).$

\section{Local Langlands Correspondence for \texorpdfstring{${\GSpin_4}$}{GSpin4} and its inner forms}  \label{LLC4}

In this section we establish the LLC for $\GSpin_4$ and all its non quasi-split $F$-inner forms. 

\subsection{Construction of $L$-packets of $\GSpin_4$ and Its Inner Forms} \label{const of L-packet for GSpin4}

It follows from the arguments in Section \ref{results in rest} on restriction and the group structure \eqref{derived of 4} that 
given $\sigma \in \Irr(\GSpin_4),$ there is a lifting $\ts \in \Irr({\GL}_2 \times {\GL}_2)$ such that 
\[
\sigma \hookrightarrow {\Res}_{{\GSpin}_{4}}^{\GL_2 \times \GL_2}(\ts).
\]
By the LLC for $GL_n$ \cite{ht01, he00, scholze13}, we have a unique $\tvp_{\ts} \in \Phi({\GL}_2 \times {\GL}_2)$ corresponding to the representation $\ts.$ 
We now define a map 
\begin{eqnarray} \label{L map for GSpin4}
{\L}_{4} : {\Irr}({\GSpin}_{4}) & \longrightarrow & \Phi({\GSpin}_{4}) \\
\sigma & \longmapsto & pr_4 \circ \tvp_{\ts}.  \nonumber
\end{eqnarray} 
Note that $\L_{4}$ does not depend on the choice of the lifting $\ts.$
Indeed, if $\ts' \in \Irr(\GL_2 \times \GL_2)$ is another lifting, it follows from Proposition \ref{pro for lifting} and \eqref{GSpin4-F} that 
$\ts' \s \ts \otimes \chi$ for some quasi-character $\chi$ on
\[
({\GL}_2(F) \times {\GL}_2(F))/{\GSpin}_4(F) \s F^{\times}. 
\]
Morover, 
\[
F^{\times} \s H^1(F, \CC^{\times}),
\]
where $\CC^{\times}$ is as in \eqref{ex seq L-gp for GSpin4}.
The LLC for $\GL_2 \times \GL_2$ maps
$\ts'$ to $\tvp_{\ts} \otimes \chi$ (by abuse of notation, employing $\chi$ for both the quasi-character and 
its parameter via Local Class Field Theory). 
Since $pr_4(\tvp_{\ts} \otimes \chi) = pr_4(\tvp_{\ts})$ by \eqref{ex seq L-gp for GSpin4}, the map $\L_{4}$ is well-defined.

Moreover, we note that $\L_4$ is surjective.
Indeed, by Labesse's Theorem in Section \ref{thm by Labesse}, $\vp \in \Phi(\GSpin_{4})$ 
can be lifted to some $\tvp \in \Phi(GL_2 \times \GL_2).$ 
We then obtain $\ts \in \Irr(GL_2 \times \GL_2)$ via the LLC for $\GL_2 \times \GL_2.$   
Thus, any irreducible constituent in the restriction ${\Res}_{{\GSpin}_{4}}^{\GL_2 \times \GL_2}(\ts)$ has the image $\vp$ via the map $\L_4.$

As expected, for each $\vp \in \Phi({\GSpin}_{4}),$ we define the $L$-packet $\Pi_{\vp}({\GSpin}_{4})$ 
as the set of all inequivalent irreducible constituents of $\ts$
\begin{equation} \label{def of L-packet for GSpin4}
\Pi_{\vp}({\GSpin}_{4}):=\Pi_{\ts}({\GSpin}_4) = 
\left\{ \sigma \hookrightarrow {\Res}_{{\GSpin}_{4}}^{\GL_2 \times \GL_2}(\ts) \right\} \slash \s,
\end{equation}
where $\ts$ is the unique member in $\Pi_{\tvp}(\GL_2 \times \GL_2)$ and 
$\tvp \in \Phi(\GL_2 \times \GL_2)$ is such that $pr_{4} \circ \tvp=\vp.$ 
By the LLC for $\GL_2$ and Proposition \ref{pro for lifting}, the fiber does not depends on the choice of $\tvp.$

We define the $L$-packets for the non quasi-split inner forms similarly.  Using the group structure described in Section \ref{gp structure},
given $\sigma^{2,1}_4 \in \Irr({\GSpin}^{2,1}_4),$ there is a lifting $\ts^{2,1}_4 \in \Irr({\GL}_2 \times {\GL}_1(D))$ such that 
\[
\sigma^{2,1}_4 \hookrightarrow {\Res}_{{\GSpin}^{2,1}_4}^{{\GL}_2 \times {\GL}_1(D)}(\ts^{2,1}_4).
\]
Combining the LLC for $\GL_2$ and $\GL_1(D)$ \cite{hs11},  
we have a unique $\tvp_{\ts^{2,1}_4} \in \Phi({\GL}_2 \times {\GL}_1(D))$ corresponding to the representation $\ts^{2,1}_4.$
We thus define the map  
\begin{eqnarray} \label{L map for inner 1 GSpin4}
{\L}^{2,1}_4 : {\Irr}({\GSpin}^{2,1}_4) &\longrightarrow & \Phi({\GSpin}^{2,1}_4) \\
\sigma^{2,1}_4 & \longmapsto & pr_4 \circ \tvp_{\ts^{2,1}_4}. \nonumber
\end{eqnarray}
Again, it follows from the LLC for $\GL_2$ and $\GL_1(D)$ that this map is well-defined and surjective.

Likewise, for the other $F$-inner form $\GSpin^{1,1}_4$ of $\GSpin_4,$ we have a well-defined and surjective map
\begin{eqnarray} \label{L map for inner 2 GSpin4}
{\L}^{1,1}_4 : {\Irr}({\GSpin}^{1,1}_4) & \longrightarrow & \Phi({\GSpin}^{1,1}_4) \\
\sigma^{1,1}_4 & \longmapsto & pr_4 \circ \tvp_{\ts^{1,1}_4}. \nonumber 
\end{eqnarray}

We similarly define $L$-packets 
\begin{equation} \label{def of inner 1 GSpin4}
\Pi_{\vp}({\GSpin}^{2,1}_4) = \Pi_{\ts^{2,1}_4}({\GSpin}^{2,1}_4), \quad \vp \in \Phi({\GSpin}^{2,1}_4)
\end{equation}
and 
\begin{equation} \label{def of inner 2 GSpin4}
\Pi_{\vp}({\GSpin}^{1,1}_4) = \Pi_{\ts^{1,1}_4}({\GSpin}^{1,1}_4), \quad \vp \in \Phi({\GSpin}^{1,1}_4). 
\end{equation}
Again, these $L$-packet do not depend on the choice of $\tvp$ for similar reasons.

\subsection{Internal Structure of $L$-packets of $\GSpin_4$ and Its Inner Forms} \label{para for GSpin4}
In this section we continue to employ the notation in Section \ref{general LLC}. 
For simplicity of notation, we shall write ${\GSpin}_\sharp$ for the split
${\GSpin}_{4},$ and its non quasi-split $F$-inner forms ${\GSpin}^{2,1}_4$ and ${\GSpin}^{1,1}_4.$
Likewise, we shall write ${\SL}_\sharp$ and ${\GL}_\sharp$ for the corresponding groups in 
\eqref{derived of 4}, \eqref{derived of 4 inner 1}, and \eqref{derived of 4 inner 2} so that
we have
\begin{equation} \label{sharp}
{\SL}_\sharp \subset {\GSpin}_\sharp \subset  {\GL}_\sharp 
\end{equation}
in all cases.  From Section \ref{L-groups}, we recall that 
\begin{align*}
(\widehat{{\GSpin}_\sharp})_{\ad} &= {\PSO}_4(\CC) \s {\PGL}_2(\CC) \times {\PGL}_2(\CC), \\ 
(\widehat{{\GSpin}_\sharp})_{\scn} &={\Spin}_4(\CC) \s {\SL}_2(\CC) \times {\SL}_2(\CC), \\
Z((\widehat{{\GSpin}_\sharp})_{\scn}) &= Z((\widehat{{\GSpin}_\sharp})_{\scn})^{\Gamma} \s \mu_2(\CC) \times \mu_2(\CC).
\end{align*}

Let $\vp \in \Phi({\GSpin}_\sharp)$ be given.
We fix a lifting $\tvp \in \Phi({\GL}_\sharp)$ via the surjective map 
$\widehat{{\GL}_\sharp}  \longrightarrow \widehat{{\GSpin}_\sharp}$ (cf. Theorem \ref{thm by Labesse}). 
With notation as in Section \ref{general LLC}, we have
\begin{align*}
S_{\vp} & \subset  {\PSL}_2(\CC)\times {\PSL}_2(\CC),  \\
S_{\vp, \scn} &\subset {\SL}_2(\CC)\times {\SL}_2(\CC).
\end{align*}
We then have (cf. \eqref{central ext}) a central extension 
\begin{equation} \label{central ext GSpin4}
1 \longrightarrow \widehat Z_{\vp, \scn}  \longrightarrow 
\cS_{\vp, \scn} \longrightarrow \cS_{\vp} \longrightarrow 1.
\end{equation}
Let $\zeta_{4},$ $\zeta^{2,1}_4,$ and $\zeta^{1,1}_4$ be characters on $Z((\widehat{{\GSpin}_\sharp})_{\scn})$ 
which respectively correspond to $\GSpin_{4},$ $\GSpin^{2,1}_4,$ and $\GSpin^{1,1}_4$ via the Kottwitz isomorphism \cite[Theorem 1.2]{kot86}. 
Note that 
\[\zeta_{4} = \mathbbm{1}, \quad \zeta^{2,1}_4 = \mathbbm{1} \times \sgn, \quad \mbox{ and } \quad \zeta^{1,1}_4 = \sgn \times \sgn, 
\]
where $\sgn$ is the non-trivial character of $\mu_2(\CC).$

\begin{thm} \label{1-1 for GSpin4}
Given an $L$-parameter $\vp \in \Phi(\GSpin_{\sharp}),$ 
there is a one-to-one bijection 
\begin{eqnarray*}
\Pi_{\vp}({\GSpin}_{\sharp}) & \, \overset{1-1}{\longleftrightarrow} \,  & \Irr(\cS_{\vp, \scn}, \zeta_{\sharp}), \\
 \sigma &\mapsto & \rho_{\sigma},
\end{eqnarray*}
such that we have the following decomposition
\[
V_{\ts} ~ ~ \s \bigoplus_{\sigma \in \Pi_{\vp}({\GSpin}_{\sharp})} \rho_{\sigma} \boxtimes  \si
\]
as representations of the direct product $\cS_{\vp, \scn} \times {\GSpin}_{\sharp}(F),$
where $\ts \in \Pi_{\tvp}(\GL_{\sharp})$ is an extension of $\si \in \Pi_{\vp}({\GSpin}_{\sharp})$ to $\GL_{\sharp}(F)$ 
as in Section \ref{results in rest} and $\tvp \in \Phi(\GL_{\sharp})$ is a lifting of $\vp \in \Phi(\GSpin_{\sharp}).$ 
Here, $\zeta_{\sharp} \in \left\{ \zeta_{4}, \zeta^{2,1}_4, \zeta^{1,1}_4 \right\}$ according to which inner form 
$\GSpin_{\sharp}$ is.  
\end{thm}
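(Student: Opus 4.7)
The plan is to reduce the theorem to the known $L$-packet parameterization for $\SL_n$ and its inner forms (recalled in Section \ref{for sl(m,D)}), by exploiting the sandwich $\SL_\sharp \subset \GSpin_\sharp \subset \GL_\sharp$ of \eqref{sharp}. Concretely, I will build a homomorphism $\Lambda : \cS_{\vp,\scn} \longrightarrow \mcA(\ts)$ fitting into a commutative diagram modeled on \eqref{a diagram}, and then invoke the decomposition \eqref{useful decomp} for $\ts$ together with the bijection \eqref{bij 333} to read off both the parameterization and the promised decomposition of $V_{\ts}$.

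First I would identify $I(\ts)$ explicitly. From \eqref{GSpin4-F} together with its inner form analogues one has $\GL_\sharp(F)/\GSpin_\sharp(F) \cong F^\times$; by local class field theory the dual group of characters is canonically identified with the quasi-characters of $W_F$, which via \eqref{ex seq L-gp for GSpin4} is precisely the kernel $\CC^\times$ of $\widehat{\GL_\sharp}\twoheadrightarrow\widehat{\GSpin_\sharp}$. Under the LLC for $\GL_2$ and $\GL_1(D)$ (recalled before \eqref{L map for GSpin4} and \eqref{L map for inner 1 GSpin4}), twisting $\ts$ by such a $\chi$ corresponds to twisting the lift $\tvp$ by the associated parameter. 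Thus $I(\ts)$ is canonically the group of $\chi$ with $\tvp\otimes\chi\s\tvp$, and projection of centralizers gives a natural isomorphism $I(\ts)\s\cS_\vp$, which will be the right vertical arrow in the diagram.

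Next I construct $\Lambda$ by pulling back through the simply connected cover. Because $(\widehat{\GSpin_\sharp})_{\scn}=\Spin_4(\CC)\s\SL_2(\CC)\times\SL_2(\CC)$ coincides with the dual group of $\SL_\sharp$ in all three cases, the sandwich \eqref{sharp} yields a natural inclusion $\cS_{\vp,\scn}\hookrightarrow \cS_{\vp,\scn}(\widehat{\SL_\sharp})$. Applying the Hiraga-Saito homomorphism $\Lambda_{\SL_\sharp}$ (built factor-by-factor on each $\SL_2$ or $\SL_1(D)$ summand, cf.\ \eqref{a diagram} of Section \ref{for sl(m,D)}) and restricting gives the desired $\Lambda:\cS_{\vp,\scn}\to\mcA(\ts)$. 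The central extension \eqref{central ext GSpin4} then maps to \eqref{exact for CC1} as follows:
\[
\begin{CD}
1 @>>> \widehat Z_{\vp,\scn} @>>> \cS_{\vp,\scn} @>>> \cS_\vp @>>> 1 \\
@. @VV{\zeta_\sharp}V @VV{\Lambda}V @VV{\s}V @.\\
1 @>>> \CC^\times @>>> \mcA(\ts) @>>> I(\ts) @>>> 1.
\end{CD}
\]
The right square commutes by construction of $\Lambda$ from $\Lambda_{\SL_\sharp}$ and by the identification of $I(\ts)$ above.

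Finally I would verify commutativity of the left square, i.e.\ that the character by which $\Lambda$ acts on $Z((\widehat{\GSpin_\sharp})_{\scn})=\mu_2(\CC)\times\mu_2(\CC)$ is exactly $\zeta_\sharp$. Granted the diagram, \eqref{useful decomp} and \eqref{bij 333} immediately produce the decomposition
\[
V_{\ts}\s\bigoplus_{\si\in\Pi_\vp(\GSpin_\sharp)}\rho_\si\boxtimes\si
\]
as $\cS_{\vp,\scn}\times\GSpin_\sharp(F)$-modules, with $\rho_\si = \xi_\si\circ\Lambda\in\Irr(\cS_{\vp,\scn},\zeta_\sharp)$, and the assignment $\si\mapsto\rho_\si$ is bijective onto $\Irr(\cS_{\vp,\scn},\zeta_\sharp)$. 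The main obstacle is precisely this last compatibility: one must match the Kottwitz character describing the inner form of $\GSpin_\sharp$ with the product of the Kottwitz characters for the two $\SL_2$- or $\SL_1(D)$-factors making up $\SL_\sharp$. This is done by tracing through the admissible diagrams of Section \ref{gp structure} and invoking functoriality of the Kottwitz isomorphism under the central isogeny $(\widehat{\GSpin_\sharp})_{\scn}=\widehat{\SL_\sharp}\twoheadrightarrow(\widehat{\GSpin_\sharp})_{\ad}$; the explicit values $\zeta_4=\mathbbm 1$, $\zeta^{2,1}_4=\mathbbm 1\times\sgn$, $\zeta^{1,1}_4=\sgn\times\sgn$ then read off directly.
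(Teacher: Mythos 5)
Your proposal is correct and follows essentially the same route as the paper's proof: identify $I^{\GSpin_\sharp}(\ts)\s\cS_{\vp}$, embed $\cS_{\vp,\scn}$ into the Hiraga--Saito extension $\cS_{\bar\vp,\scn}$ for $\SL_\sharp$, restrict $\Lambda_{\SL_2\times\SL_2}$ to get the commutative diagram with central character $\zeta_\sharp$ (matched with the Kottwitz character for $\SL_\sharp$, since $\GSpin_\sharp$ and $\SL_\sharp$ share the same adjoint group and hence the same inner-twisting class), and then read off the bijection and the decomposition of $V_{\ts}$ from \eqref{useful decomp} and \eqref{bij 333}. Only a notational slip: $(\widehat{\GSpin_\sharp})_{\scn}=\Spin_4(\CC)$ coincides with the simply connected cover $(\widehat{\SL_\sharp})_{\scn}=\SL_2(\CC)\times\SL_2(\CC)$, not with $\widehat{\SL_\sharp}=\PGL_2(\CC)\times\PGL_2(\CC)$ itself, which does not affect the argument.
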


\begin{proof}
We follow the ideas in Section \ref{for sl(m,D)} and \cite[Theorem 5.4.1]{chaoli}.
Given $\vp \in \Phi(\GSpin_{\sharp}),$ we choose a lifting $\tvp \in \Phi(\GL_{\sharp})$ 
and obtain the projection $\bar{\vp} \in \Phi(\SL_{\sharp})$ in the following commutative diagram
\begin{equation} 
\label{projection parameters 4}
\xymatrix{
& {\quad \quad \quad \quad \quad \quad} \widehat{{\GL}_{\sharp}} = {\GL}_2(\CC) \times  {\GL}_2(\CC) \ar@{->>}[d]^{{~~}pr_4}\\
W_F \times {\SL}_2(\CC) \ar@{->}[ur]^{\tvp} \ar@{->}[r]^{\vp}
\ar@{->}[dr]_{\bar{\vp}} 
& {\quad} \widehat{{\GSpin}_{\sharp}} = {\GSO}_4(\CC)  \ar@{->>}^{{~~}\bar{pr}}[d] 
\\
& {\quad \quad \quad \quad \quad \quad \quad} \widehat{{\SL}_{\sharp}} = {\PGL}_2(\CC) \times  {\PGL}_2(\CC) ~ .
}
\end{equation}
We then have $\ts \in \Pi_{\tvp}(\GL_{\sharp})$ which is an extension of $\sigma \in \Pi_{\vp}(\GSpin_{\sharp}).$
In addition to \eqref{ex seq L-gp for GSpin4}, we also have 
\[
1 \longrightarrow \CC^{\times} \times \CC^{\times} \longrightarrow {\GL}_2(\CC) \times 
{\GL}_2(\CC) \overset{\bar{pr} \circ pr_4}{\longrightarrow} {\PGL}_2(\CC) \times  {\PGL}_2(\CC) \longrightarrow 1 
\]
Considering the kernels of the projections $pr_4$ and $\bar{pr} \circ pr_4,$
we set
\[
X^{{\GSpin}_{\sharp}}(\tvp) := \{ a \in H^1(W_F, \CC^{\times}) : a \tvp \s \tvp \}
\]
\[
X^{{\SL}_{\sharp}}(\tvp) := \{ a \in H^1(W_F, \CC^{\times} \times \CC^{\times}) : a \tvp \s \tvp \}.
\]
Moreover, by \eqref{convenient exact sequence GSpin4} and its analogues for the two non quasi-split $F$-inner forms, we have 
\[
{\GL}_{\sharp}(F) / {\GSpin}_{\sharp}(F) \s F^{\times}.
\]
As an easy consequence of Galois cohomology, we also have
\[
{\GL}_{\sharp}(F) / {\SL}_{\sharp}(F) \s F^{\times} \times F^{\times}.
\]
Set
\[
I^{{\GSpin}_{\sharp}}(\ts) := \{ \chi \in (F^{\times})^D \s ({\GL}_{\sharp}(F) / {\GSpin}_{\sharp}(F))^D : \ts \chi  \s \ts \}
\]
\[
I^{{\SL}_{\sharp}}(\ts) := \{ \chi \in (F^{\times})^D \times (F^{\times})^D \s ({\GL}_{\sharp}(F) / {\SL}_{\sharp}(F))^D : \ts \chi  \s \ts \}.
\]
\begin{rem} \label{rem on characters}
Since any character on $\GL_n(F)$  (respectively, $\GL_m(D)$) is of the form $\chi \circ \det$ (respectively, $\chi \circ \Nrd$) 
for some character $\chi$ on $F^{\times}$ (see \cite[Section 53.5]{bh06}), 
we often make no distinction between $\chi$ and $\chi \circ \det$ (respectively, $\chi \circ \Nrd$).
Moreover, we note that $\chi \in ({\GL}_{\sharp}(F))^D$ can be written as $\tchi_1 \boxtimes \tchi_2,$ where $\tchi_i$ 
with $i=1,2,$ is a character of $\GL_2(F)$ or $D^{\times}$ depending on the type of ${\GL}_{\sharp}.$
\end{rem}

It follows from the definitions that 
\begin{equation} \label{subset X I 4}
X^{{\GSpin}_{\sharp}}(\tvp) \subset X^{{\SL}_{\sharp}}(\tvp) \quad \mbox{ and } \quad 
I^{{\GSpin}_{\sharp}}(\ts) \subset I^{{\SL}_{\sharp}}(\ts). 
\end{equation}
We have $(F^{\times})^D \s H^1(W_F, \CC^{\times})$ by the local class field theory. 
It is also immediate from the LLC for $GL_n$ that
\[
X^{{\GSpin}_{\sharp}}(\tvp) \s I^{{\GSpin}_{\sharp}}(\ts) \quad \mbox{ and } \quad X^{{\SL}_{\sharp}}(\tvp) \s I^{{\SL}_{\sharp}}(\ts)
\]
as groups of characters. 
Now, with the component group notation $\cS$ of Section \ref{general LLC}, we claim that
\begin{equation} \label{crutial iso for gspin4}
I^{{\GSpin}_{\sharp}}(\ts) \s \cS_{\vp}.
\end{equation}
Indeed, by \cite[Lemma 5.3.4]{chaoli} and the above arguments, this claim follows from 
\[
\cS_{\vp} \s X^{{\GSpin}_{\sharp}}(\tvp),
\]
since $\cS_{\tvp}$ is always trivial by the LLC for ${\GL}_{\sharp}.$  
Notice that $\tvp$ and $\vp$ here are respectively $\phi$ and $\phi^{\sharp}$ in \cite[Lemma 5.3.4]{chaoli}.
Thus, due to \eqref{a diagram}, \eqref{subset X I 4}, and\eqref{crutial iso for gspin4} we have
\begin{equation} \label{subset cS 4}
\cS_{\vp} \subset \cS_{\bar\vp}.
\end{equation}

Since the centralizer $C_{\bar\vp}$ (in $\widehat{{\SL}}_{\sharp}$) is equal to the image of the disjoint union
\[
\coprod_{\nu \in {\Hom}(W_F, \CC^{\times})} 
\left\{ h \in {\GSO}_4(\CC) : h \vp(w)h^{-1} \vp(w)^{-1} = \nu(w) \right\}
\]
from the exact sequence
\[
1 \longrightarrow \CC^{\times} \longrightarrow {\GSO}_4(\CC) \overset{\bar{pr} }{\longrightarrow} 
\widehat{{\SL}_{\sharp}} = {\PGL}_2(\CC) \times  {\PGL}_2(\CC) \longrightarrow 1,
\]
we have 
\[
S_{\vp} \subset C_{\bar\vp} = S_{\bar\vp}.
\]
So, the pre-images in $\SL_2(\CC) \times \SL_2(\CC) $ via the isogeny 
$\SL_2(\CC) \times \SL_2(\CC) \twoheadrightarrow \PGL_2(\CC) \times \PGL_2(\CC)$ also satisfy
\[
S_{\vp, \scn}
\subset
S_{\bar\vp, \scn}
\subset {\SL}_2(\CC) \times {\SL}_2(\CC).
\]
This provides the inclusion of the identity components
\begin{equation} \label{subset S sc 4}
S_{\vp, \scn}^{\circ} \subset 
S_{\bar\vp, \scn}^{\circ},
\end{equation}
and by the definition $\widehat Z_{\vp, \scn} := Z(\widehat G_{\scn}) / (Z(\widehat G_{\scn}) \cap S_{\vp, \scn}^{\circ})$ 
of Section \ref{general LLC} 
we have the surjection
\begin{equation} \label{subset Z 4}
\widehat Z_{\vp, \scn} \twoheadrightarrow \widehat Z_{\bar{\vp}, \scn}. 
\end{equation}
Combining \eqref{subset cS 4}, \eqref{subset S sc 4}, and \eqref{subset Z 4}, 
we have the following commutative diagram of component groups: 
\begin{equation} \label{pre pre diagram gspin4}
\begin{CD}
1 @>>>  \widehat Z_{\vp, \scn}  @>>> \cS_{\vp, \scn} @>>> \cS_{\varphi}  @>>> 1 \\ 
@. @VV{\twoheadrightarrow}V @VV{\cap}V @VV{\cap}V @.\\
1 @>>>  \widehat Z_{\bar\vp, \scn}  @>>> \cS_{\bar\vp, \scn} @>>> \cS_{\bar\vp}  @>>> 1.
\end{CD} 
\end{equation}
We apply the Hiraga-Saito homomorphism $\Lambda_{\SL_n}$ of \eqref{a diagram} 
in the case of our $\SL_{\sharp},$ which we denote by $\Lambda_{\SL_2 \times \SL_2}.$  
The restriction
\[
\Lambda_{\sharp} := \Lambda_{\SL_2 \times \SL_2}\vert_{\cS_{\vp, \scn}} 
\] 
then gives the following commutative diagram: 
\begin{equation} \label{pre diagram gspin4}
\begin{CD}
1 @>>> \widehat Z_{\vp, \scn}  @>>> \cS_{\vp, \scn} @>>> \cS_{\vp}  @>>> 1 \\
@. @VV{\zeta_{\sharp}}V @VV{\Lambda_{\sharp}}V @VV{\cap}V @.\\
1 @>>> \CC^\times @>>> \mcA(\ts) @>>> I^{\SL_{\sharp}}(\ts) @>>> 1.
\end{CD} 
\end{equation}
Note that $\zeta_{\sharp}$ is identified with $\zeta_G$ in \eqref{a diagram} as the character on $\mu_2(\CC) \times \mu_2(\CC)$ 
since both are determined according to $\SL_\sharp.$

Similar to $\mcA(\ts)$ (cf. Section \ref{results in rest}), we write $\mcA^{\GSpin_{\sharp}}(\ts)$ 
for the subgroup of ${\Aut}_{\CC}(V_{\ts})$ generated by $\CC^\times$ and 
$\left\{I_{\chi} : \chi \in I^{\GSpin_{\sharp}}(\ts) \right\}.$   Hence, $\mcA^{\GSpin_{\sharp}}(\ts) \subset \mcA(\ts).$
By the definition of $\Lambda_{\SL_2\times\SL_2}$ in \eqref{a diagram} and the commutative diagram \eqref{pre pre diagram gspin4}, it is immediate that the image of $\Lambda_{\sharp}$ is 
$\mcA^{\GSpin_{\sharp}}(\ts).$
We thus have the following commutative diagram: 
\begin{equation} \label{diagram gspin4}
\begin{CD}
1 @>>> \widehat Z_{\vp, \scn}  @>>> \cS_{\vp, \scn} @>>> \cS_{\vp}  @>>> 1 \\
@. @VV{\zeta_{\sharp}}V @VV{\Lambda_{\sharp}}V @VV{\s}V @.\\
1 @>>> \CC^\times @>>> \mcA^{\GSpin_{\sharp}}(\ts) @>>> I^{\GSpin_{\sharp}}(\ts) @>>> 1.
\end{CD} 
\end{equation}
The representation $\rho_{\sigma} \in \Irr(\cS_{\vp, \scn}, \zeta_{\sharp})$ is defined by $\xi_{\si} \circ \Lambda_{\sharp},$ 
where $\xi_{\si} \in \Irr(\mcA^{\GSpin_\sharp}(\ts), \id)$ is the character as in the decomposition \eqref{useful decomp}.
Therefore, arguments of Section \ref{results in rest}, 
our construction of $L$-packets $\Pi_{\vp}(\GSpin_\sharp)$ in Section \ref{const of L-packet for GSpin4} 
and diagram \eqref{diagram gspin4} above give 
\begin{equation} \label{imp bijection gspin4}
\Irr(\cS_{\vp, \scn}, \zeta_{\sharp}) \, \overset{1-1}{\longleftrightarrow} \, 
\Irr(\mcA^{\GSpin_{\sharp}}(\ts), \id) \, \overset{1-1}{\longleftrightarrow} 
\Pi_{\vp}({\GSpin}_{\sharp}). 
\end{equation} 
We also have the following decomposition
\[
V_{\ts} ~ ~ \s \bigoplus_{\si \in \Pi_{\vp}({\GSpin_\sharp})} \rho_{\si} \boxtimes  \si 
~ ~  
=
\bigoplus_{\rho \in \Irr(\cS_{\vp, \scn}, \zeta_{\sharp})} \rho \boxtimes  \si_{\rho},
\]
where $\si_{\rho}$ denotes the image of $\rho$ via the bijection between $\Pi_{\vp}(\GSpin_\sharp)$ 
and $\Irr(\cS_{\vp, \scn}, \zeta_{\sharp}).$ Hence, the proof of Theorem \ref{1-1 for GSpin4} is complete.
\end{proof}
\begin{rem} 
Similar to $\Lambda_{\SL_2\times\SL_2}$ in Section \ref{results in rest},  
since $\Lambda_{\sharp}$ is unique up to 
$\operatorname{Hom}(I^{\GSpin_\sharp}(\ts), \CC^{\times}) \s \operatorname{Hom}(\cS_{\vp}, \CC^{\times}),$ 
the same is true for the bijection \eqref{imp bijection gspin4}.
\end{rem}
\begin{rem}
Given $\vp \in \Phi(\GSpin_{4}),$ by Theorem \ref{1-1 for GSpin4}, we have a one-to-one bijection 
\[
\Pi_{\vp}({\GSpin}_{4}) \cup \Pi_{\vp}({\GSpin}^{2,1}_4) \cup 
\Pi_{\vp}({\GSpin}_4^{1,2}) \cup \Pi_{\vp}({{\GSpin}^{1,1}_4}) 
\overset{1-1}{\longleftrightarrow} {\Irr}(\cS_{\vp, \scn}),
\] 
where ${\GSpin}_4^{1,2} \s \{ (g_1, g_2) \in (\GL_1(D) \times \GL_2) : \Nrd(g_1) = \det g_2 \},$ which is isomorphic to ${\GSpin}^{2,1}_4.$
\end{rem}
%

\subsection{$L$-packet Sizes for $\GSpin_4$ and Its Inner Forms} \label{size of L-pacekt gspin4}
In this subsection, we describe the group $I^{\GSpin_4}(\ts)$ for $\ts \in \Irr(\GL_2 \times \GL_2)$ case by case, discuss the group structure of $\cS_{\vp, \scn},$ and provide all the sizes of $L$-packets for $\GSpin_{4},$ $\GSpin^{2,1}_4,$ and $\GSpin^{1,1}_4$ using Theorem \ref{1-1 for GSpin4}.
Let us first give the possible cardinalities for the $L$-packets of $\GSpin_4$ and its inner forms using Galois cohomology. 
\begin{pro} \label{pro size 4}
Let $\Pi_{\vp}(\GSpin_\sharp)$ be an $L$-packet associated to $\vp \in \Phi(\GSpin_\sharp).$  
Then we have
\[
\left| \Pi_{\vp}({\GSpin}_\sharp) \right| 
\; \Big{|} \; 
\left| F^{\times}/(F^{\times})^2 \right|,
\]
which yields the possible cardinalities 
\[
\left| \Pi_{\vp}({\GSpin}_\sharp) \right| 
= \left\{ 
\begin{array}{l l}
    1, ~ 2, ~ 4, & \: \text{if} ~ p \neq 2, \\

    1, ~ 2, ~ 4, ~ 8, & \: \text{if} ~ p= 2. \\
  \end{array}
  \right.
\]
\end{pro}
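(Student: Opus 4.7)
The plan is to bound the $L$-packet size via the character group $I^{\GSpin_\sharp}(\ts),$ show it sits inside the group of quadratic characters of $F^\times,$ and finally apply the standard local class field theory computation of $|F^\times/(F^\times)^2|.$

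First, combining Theorem \ref{1-1 for GSpin4} with the multiplicity identity \eqref{multi and characters} applied to the pair $\GSpin_\sharp \subset \GL_\sharp,$ the relation $m^2\,|\Pi_\vp(\GSpin_\sharp)| = |I^{\GSpin_\sharp}(\ts)|$ shows that $|\Pi_\vp(\GSpin_\sharp)|$ divides $|I^{\GSpin_\sharp}(\ts)|.$

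Second, I would realize $I^{\GSpin_\sharp}(\ts)$ as a subgroup of $(F^\times)^D$ using the identification $\GL_\sharp(F)/\GSpin_\sharp(F) \cong F^\times$ furnished by Lemma \ref{GC for GSpin} together with the exact sequences \eqref{convenient exact sequence GSpin4}, \eqref{convenient exact sequence 4 inner 1}, \eqref{convenient exact sequence 4 inner 2}. Writing $\ts = \tau_1 \boxtimes \tau_2$ and lifting $\chi \in (F^\times)^D$ to the character $\tilde\chi(g_1,g_2) = \chi(\det g_1)\,\chi(\det g_2)^{-1}$ on $\GL_\sharp$ (with $\Nrd$ replacing $\det$ on any division algebra factor), the hypothesis $\ts \otimes \tilde\chi \cong \ts$ translates into $\tau_1 \otimes \chi \cong \tau_1$ and $\tau_2 \otimes \chi^{-1} \cong \tau_2.$ Passing to the central character via $\det$ or $\Nrd$ and using that every factor has Jacquet-Langlands dimension $2,$ this forces $\chi^2 = 1.$

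Third, by Pontryagin duality the $2$-torsion subgroup of $(F^\times)^D$ has cardinality $|F^\times/(F^\times)^2|,$ and a standard local class field theory computation gives $|F^\times/(F^\times)^2| = 4$ for $p \neq 2$ and $|F^\times/(F^\times)^2| \geq 8$ for $p = 2$ (with the minimum value $8$ attained at $F = \QQ_2$). Assembling the three divisibilities, $|\Pi_\vp(\GSpin_\sharp)|$ divides $|F^\times/(F^\times)^2|,$ and reading off the divisors yields the listed cardinalities.

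The hardest part will be verifying the quadratic character claim $\chi^2 = 1$: one must carefully track the lifting of $\chi$ through the quotient map $\GL_\sharp \twoheadrightarrow F^\times$ (whose definition involves $\det$ or $\Nrd$ of total relative degree $2$ on each factor) and then invoke the classical fact that a self-twist of an irreducible representation of $\GL_2(F)$ or $\GL_1(D)$ (for $D$ the quaternion division algebra over $F$) must be by a character of order dividing $2.$
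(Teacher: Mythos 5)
Your argument is correct, but it reaches the divisibility by a genuinely different route than the paper. The paper's own proof is purely cohomological: it applies Galois cohomology to $1 \to Z(\GSpin_\sharp) \to Z(\GL_\sharp)\times\GSpin_\sharp \to \GL_\sharp \to 1,$ uses Lemma \ref{GC for GSpin}, the vanishing of $H^1(F, Z(\GSpin_\sharp)^{\circ}),$ and \eqref{centers} to embed $\GL_\sharp(F)/\bigl((F^{\times}\times F^{\times})\times\GSpin_\sharp(F)\bigr)$ into $H^1(F,\pi_0(Z(\GSpin_\sharp)))\cong F^{\times}/(F^{\times})^2$ as in \eqref{coker imbeding 4}, and then bounds $|\Pi_{\vp}(\GSpin_\sharp)|$ by the index of the stabilizer of $\sigma,$ which contains $(F^{\times}\times F^{\times})\times\GSpin_\sharp(F).$ You instead bound the packet through the twisting group: $m^2\,|\Pi_{\vp}(\GSpin_\sharp)| = |I^{\GSpin_\sharp}(\ts)|$ from \eqref{multi and characters}, and then show every $\chi\in I^{\GSpin_\sharp}(\ts)$ pulls back to $\chi\circ\det\boxtimes\chi^{-1}\circ\det$ (with $\Nrd$ on division-algebra factors) and is quadratic because a self-twist multiplies the central character of a degree-two factor by $\chi^2.$ That central-character step is sound and in fact anticipates the paper's later finer analysis (Lemma \ref{lemma form of characters for GSpin4}, Proposition \ref{form of characters for I(GSpin4)}, Remark \ref{rem for case I II III}, where quadraticity is instead deduced from the LLC for $\SL_2$ and Gross--Prasad); your version has the advantage of applying uniformly to the inner forms, and if pushed slightly further (each $I^{\SL_2}(\ts_i)$ has order at most $4$) it recovers the sharper conclusion of Remarks \ref{cor for I(GSpin4)} and \ref{rem 2 for I(GSpin4)} that only $1,2,4$ occur, which the cohomological argument cannot see. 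One shared caveat: when $p=2$ and $[F:\QQ_2]>1$ one has $|F^{\times}/(F^{\times})^2| = 2^{[F:\QQ_2]+2} > 8,$ so ``reading off the divisors'' literally allows values beyond $8$; this imprecision is present in the paper's own statement and proof as well, so it is not a gap of your proposal relative to the paper, but it is worth flagging that the list $1,2,4,8$ follows from the divisibility only for $F=\QQ_2.$
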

\begin{proof}
We follow the idea of a similar result in the case of $\Sp_4$ in \cite{chgo12}.
The exact sequence of algebraic groups 
\[
1 \longrightarrow Z({\GSpin}_\sharp) \longrightarrow Z({\GL}_\sharp) \times {\GSpin}_\sharp \longrightarrow {\GL}_\sharp \longrightarrow 1,
\] 
where the second map is given by multiplication (considering $\GSpin_\sharp \subset \GL_\sharp$),  
gives a long exact sequence
\[
\cdots \longrightarrow (F^{\times} \times F^{\times}) \times {\GSpin}_\sharp(F) \longrightarrow {\GL}_\sharp(F) \longrightarrow H^1(F, Z({\GSpin}_\sharp)) \longrightarrow H^1(F, Z({\GL}_\sharp) \times {\GSpin}_\sharp) \longrightarrow 1.
\]
Since $H^1(F, Z({\GL}_\sharp) \times {\GSpin}_\sharp) = 1$ by Lemma \ref{GC for GSpin} and \cite[Lemmas 2.8]{pr94},  
we have 
\[
{\GL}_\sharp(F) / \left( (F^{\times} \times F^{\times}) \times {\GSpin}_\sharp(F) \right) \hookrightarrow  H^1(F, Z({\GSpin}_\sharp)).
\]
Also, the exact sequence
\[
1 \longrightarrow Z({\GSpin}_\sharp)^{\circ} \longrightarrow Z({\GSpin}_\sharp) \longrightarrow \pi_0(Z({\GSpin}_\sharp)) \longrightarrow 1 
\]
gives 
\[
H^1(F, Z({\GSpin}_\sharp)) \hookrightarrow H^1(F, \pi_0( Z({\GSpin}_\sharp))),
\]
since, by \cite[Proposition 2.3]{ash06duke}, $Z({\GSpin}_\sharp)^{\circ} \s \GL_1$  and $H^1(F, Z({\GSpin}_\sharp)^{\circ})=1.$
Combining the above with \eqref{centers}, we have
\begin{equation} \label{coker imbeding 4}
{\GL}_\sharp(F) / \left( (F^{\times} \times F^{\times}) \times {\GSpin}_\sharp(F) \right) 
\hookrightarrow  H^1(F, \pi_0( Z({\GSpin}_\sharp))) \s H^1(F, \ZZ/2\ZZ) \s F^{\times}/(F^{\times})^2.
\end{equation}

On the other hand, we know from Section \ref{results in rest} that 
\[
\Pi_{\vp}({\GSpin}_\sharp) \, \overset{1-1}{\longleftrightarrow} \, 
{\GL}_\sharp(F) / {\GL}_\sharp(F)_{\ts}
\hookrightarrow
{\GL}_\sharp(F) / \left( (F^{\times} \times F^{\times}) \times {\GSpin}_\sharp(F) \right),
\]
where $\ts \in \Irr(\GL_\sharp)$ corresponds to a lifting $\tvp \in \Phi(\GL_\sharp),$ via the LLC for $GL_n$ and its inner forms, of $\vp \in \Phi(\GSpin_\sharp).$ 
Thus the proof is complete. 
\end{proof}

In what follows, we give an explicit description of the group $I^{\GSpin_4}(\ts)$ for $\ts \in \Irr(\GL_2 \times \GL_2)$ case by case, and show that 
among the possible cardinalities in Proposition \ref{pro size 4}, only 1, 2, and 4 do indeed occur (see Remarks \ref{last rem for card GSpin4} - \ref{rem 2 for I(GSpin4)} and \eqref{size L-packets for GSpin4 1} - \eqref{size L-packets for GSpin4 2}). 
To this end, 
we verify that $I^{\GSpin_4}(\ts) \s 1, \ZZ/2\ZZ,$ or $(\ZZ/2\ZZ)^2,$ which yields $\cS_{\vp} \s 1, ~~ \ZZ/2\ZZ,$ or $(\ZZ/2\ZZ)^2$ by \eqref{crutial iso for gspin4}. 
Thus, due to Theorem \ref{1-1 for GSpin4}, the size of an $L$-packet for $\GSpin_4$ is either 1, 2, or 4.
Moreover, we show that the central extension $\cS_{\vp, \scn}$ is isomorphic to $(\ZZ/2\ZZ)^2, (\ZZ/2\ZZ)^3$, an abelian subgroup of order 16 in $Q_8 \times Q_8,$ or $Q_8 \times \ZZ/2\ZZ,$ for irreducible $\vp \in \Phi(\GSpin_4),$ and is always abelian for reducible $\vp \in \Phi(\GSpin_4).$
Here $Q_8$ is the quaternion group of order $8.$ 
Due to Theorem \ref{1-1 for GSpin4}, these facts about $\cS_{\vp, \scn}$ imply that the size of an $L$-packet for non-split inner forms $\GSpin^{2,1}_4$ and $\GSpin^{1,1}_4$ is either 1, 2, or 4.

To describe the group $I^{\GSpin_4}(\ts),$ given $\ts \in \Irr(\GL_2 \times \GL_2),$ we set $\ts = \ts_1 \boxtimes \ts_2$ with $\ts_1, \ts_2 \in \Irr(\GL_2).$ 
Due to Remark \ref{rem on characters}, we note that $\chi \in ({\GL}_{2}(F) \times {\GL}_{2}(F))^D$ is decomposed into $\tchi_1 \boxtimes \tchi_2,$ where $\tchi_i$ with $i=1,2$ is a character of $\GL_2(F),$ and we identify $\tchi_i$ and $\tchi_i \circ \det,$ since any character on $\GL_n(F)$ is of the form $\tchi \circ \det$ for some character $\tchi$ on $F^{\times}.$

\begin{lm} \label{lemma form of characters for GSpin4}
Let $\chi \in I^{\GSpin_4}(\ts)$ be given. Then,
$\chi$ is of the form 
\[
\tchi \boxtimes \tchi^{-1},
\]
where $\tchi \in (F^{\times})^D.$  
\end{lm}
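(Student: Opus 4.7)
The lemma is essentially a structural statement about the characters of the quotient $(\GL_2 \times \GL_2)(F)/\GSpin_4(F)$ rather than a statement that genuinely uses the self-twist condition $\ts \otimes \chi \cong \ts$. Accordingly, my plan is to unwind the definition of $I^{\GSpin_4}(\ts)$ and show that \emph{every} character lying in $(({\GL}_2(F) \times {\GL}_2(F))/{\GSpin}_4(F))^D$ — and thus a fortiori every character in $I^{\GSpin_4}(\ts)$ — has the required shape. The self-twist condition only cuts out a subgroup of this set of characters, so it is enough to prove the stronger claim.

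The first step is to invoke the exact sequence \eqref{convenient exact sequence GSpin4} together with Lemma \ref{GC for GSpin} and the triviality of $H^1(F,\GL_1)$ to conclude the short exact sequence on $F$-points \eqref{GSpin4-F}, so that the quotient map
\[
q\colon \GL_2(F)\times \GL_2(F)\longrightarrow F^\times,\qquad (g_1,g_2)\longmapsto (\det g_1)(\det g_2)^{-1},
\]
identifies $\bigl(\GL_2(F)\times \GL_2(F)\bigr)/\GSpin_4(F)$ with $F^\times$. Thus any element of $\bigl(\bigl(\GL_2(F)\times \GL_2(F)\bigr)/\GSpin_4(F)\bigr)^D$ is obtained by pulling back a character $\tchi\in (F^\times)^D$ along $q$.

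The second step is the computation of this pullback. For $(g_1,g_2)\in\GL_2(F)\times\GL_2(F)$ one has
\[
(\tchi\circ q)(g_1,g_2)\;=\;\tchi(\det g_1)\,\tchi(\det g_2)^{-1}\;=\;(\tchi\circ\det)(g_1)\cdot(\tchi^{-1}\circ\det)(g_2),
\]
which, under the convention of Remark \ref{rem on characters} identifying a character $\tchi$ of $F^\times$ with the character $\tchi\circ\det$ of $\GL_2(F)$, is exactly $\tchi\boxtimes \tchi^{-1}$. Conversely one checks immediately that every such character factors through $q$, so the pullback map $(F^\times)^D\hookrightarrow (\GL_2(F)\times\GL_2(F))^D$ has image precisely the set of characters of the form $\tchi\boxtimes\tchi^{-1}$. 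Intersecting with the self-twist subgroup $I^{\GSpin_4}(\ts)$ yields the lemma.

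There is no real obstacle here: once the exact sequence \eqref{GSpin4-F} is in place, the rest is a direct computation. The only point that merits attention is the identification of characters of $\GL_2(F)$ with characters of $F^\times$ via $\det$, which relies on the standard fact (cf.\ Remark \ref{rem on characters}) that $\GL_2(F)^D=(F^\times)^D\circ\det$ because $\SL_2(F)=[\GL_2(F),\GL_2(F)]$; this is what lets one freely pass between the notation $\tchi$ for a character of $F^\times$ and for a character of $\GL_2(F)$.
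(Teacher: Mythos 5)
Your proposal is correct and is essentially the paper's own argument: the paper likewise makes no use of the self-twist condition, writing $\chi = \tchi_1 \boxtimes \tchi_2$ with each factor a character of $\det$ (Remark \ref{rem on characters}) and using triviality on $\GSpin_4(F) = \{(g_1,g_2) : \det g_1 = \det g_2\}$ together with the surjectivity of $\det$ to conclude $\tchi_2 = \tchi_1^{-1}$. Your reformulation via the quotient map $q$ and the exact sequence \eqref{GSpin4-F} is the same computation read in the opposite direction.
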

\begin{proof}
Since $\chi = \tchi_1 \boxtimes \tchi_2$ as above and $\chi$ is trivial on $\GSpin_4(F),$ by the structure of 
$\GSpin_4(F)$ in \eqref{convenient description of GSpin4(F)}, we have 
\[
\chi(g_1, g_2) = \tchi_1(\det g_1) \boxtimes \tchi_2 (\det g_2)=\tchi_1(\det g_1) \tchi_2 (\det g_2) =1
\]
for all $(g_1,g_2) \in \GL_2(F) \times \GL_2(F)$ with $\det g_1 = \det g_2.$  
Since the determinant map $\det: \GL_2(F) \rightarrow F^{\times}$ is surjective, $\tchi_1 \tchi_2(x)$ must be trivial for all $x \in F^{\times}.$ Thus, $\tchi_1$ and $\tchi_2$  are inverse each other.
\end{proof}

\begin{pro} \label{form of characters for I(GSpin4)}
We have
\[ 
I^{\GSpin_4}(\ts) = \left\{ \begin{array}{lll}
         I^{{\SL}_2}(\ts_1), & 	\mbox{if $\ts_2 \s \ts_1\teta$ for some $\teta \in (F^{\times})^D$}; \\
        I^{{\SL}_2}(\ts_1) \cap I^{{\SL}_2}(\ts_2), & \mbox{if $\ts_2 \not\s \ts_1\teta$ for any $\teta \in (F^{\times})^D$}.
				\end{array} \right.
\] 
\end{pro}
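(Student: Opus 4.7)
The plan is to reduce everything to the content of Lemma \ref{lemma form of characters for GSpin4} and then a single observation about twisting invariance of stabilizers. Here is how I would proceed.

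First, by Lemma \ref{lemma form of characters for GSpin4}, every $\chi \in I^{\GSpin_4}(\ts)$ is of the form $\chi = \tchi \boxtimes \tchi^{-1}$ for some $\tchi \in (F^\times)^D$ (identified with a character of $\GL_2(F)$ via $\det$). Writing $\ts = \ts_1 \boxtimes \ts_2$, the condition $\ts \otimes \chi \cong \ts$ on $\GL_2(F) \times \GL_2(F)$ factors component-wise into the pair of conditions
\[
\ts_1 \otimes \tchi \cong \ts_1 \qquad \text{and} \qquad \ts_2 \otimes \tchi^{-1} \cong \ts_2.
\]
The first says $\tchi \in I^{\SL_2}(\ts_1)$. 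Since $I^{\SL_2}(\ts_2)$ is a subgroup of $(F^\times)^D$ and therefore closed under inversion, the second is equivalent to $\tchi \in I^{\SL_2}(\ts_2)$. Hence, identifying $\chi = \tchi \boxtimes \tchi^{-1}$ with $\tchi$, we obtain in general
\[
I^{\GSpin_4}(\ts) \;\cong\; I^{\SL_2}(\ts_1) \cap I^{\SL_2}(\ts_2).
\]
This already establishes the second case of the statement directly.

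For the first case, I would invoke the elementary fact that the stabilizer $I^{\SL_2}(\tau)$ of a representation $\tau \in \Irr(\GL_2)$ under character twists depends only on the twist-equivalence class of $\tau$. Concretely, if $\ts_2 \cong \ts_1 \teta$ for some $\teta \in (F^\times)^D$, then for any $\chi \in (F^\times)^D$,
\[
\ts_2 \otimes \chi \cong \ts_2 \;\iff\; \ts_1 \otimes \teta \otimes \chi \cong \ts_1 \otimes \teta \;\iff\; \ts_1 \otimes \chi \cong \ts_1,
\]
so $I^{\SL_2}(\ts_2) = I^{\SL_2}(\ts_1)$, and the intersection collapses to $I^{\SL_2}(\ts_1)$ as claimed.

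There is no genuine obstacle in this argument; the work is really done by Lemma \ref{lemma form of characters for GSpin4}, which pins down the shape $\tchi \boxtimes \tchi^{-1}$ of the characters in $I^{\GSpin_4}(\ts)$. The only thing to be careful about is the notational identification between the subgroup of $((\GL_2 \times \GL_2)(F))^D$ trivial on $\GSpin_4(F)$ (parametrized by $(F^\times)^D$ via the exact sequence \eqref{convenient exact sequence GSpin4}) and $(F^\times)^D$ itself, so that both sides of the asserted equality live naturally in the same ambient group. Once this identification is recorded, the proposition follows in a few lines.
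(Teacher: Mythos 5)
Your proof is correct and follows essentially the same route as the paper: both reduce via Lemma \ref{lemma form of characters for GSpin4} to the component-wise condition $\ts_i\tchi \s \ts_i$ for $i=1,2$, yielding $I^{\GSpin_4}(\ts)=I^{\SL_2}(\ts_1)\cap I^{\SL_2}(\ts_2)$, and then observe that a twist $\ts_2 \s \ts_1\teta$ forces $I^{\SL_2}(\ts_1)=I^{\SL_2}(\ts_2)$. No gaps to report.
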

\begin{proof}
Since $\ts = \ts_1 \boxtimes \ts_2$ with $\ts_1, \ts_2 \in \Irr(\GL_2),$ and by Lemma \ref{lemma form of characters for GSpin4}, we have
\begin{equation} \label{iff for characters GSpin4}
\ts \chi \s \ts 
\;  \Longleftrightarrow \;
\ts_1 \tchi \boxtimes \ts_2 \tchi^{-1} \s \ts_1 \boxtimes \ts_2
\;  \Longleftrightarrow \;
\ts_i \tchi \s \ts_i, \;  i=1,2.
\end{equation}
This shows that $I^{\GSpin_4}(\ts) = I^{\SL_2}(\ts_1) \cap I^{\SL_2}(\ts_2).$ 
In particular, if $\ts_2 \s \ts_1\teta$ for some $\teta \in (F^{\times})^D,$ 
then $I^{\SL_2}(\ts_1) = I^{\SL_2}(\ts_2).$
Thus we have, by \eqref{iff for characters GSpin4}, that 
$
I^{\GSpin_4}(\ts) = I^{\SL_2}(\ts_1). 
$
\end{proof}

\begin{rem}  \label{rem for case I II III}
By the LLC for $\SL_2$ (\cite{gk82}), we have $I^{\SL_2}(\ts_i) \s \pi_0(C_{\vp_i})$ 
for $\vp_i \in \Phi(\SL_2)$ corresponding to $\si_i \subset \Res^{\GL_2}_{\SL_2}(\ts_i).$   Recalling that 
$\PGL_2(\CC) \s \SO_3(\CC),$ it then follows from \cite[Corollary 6.6]{grossprasad92} that
$I^{\SL_2}(\ts_1)$ and $I^{\SL_2}(\ts_2)$ consist of quadratic characters. 
Then, $I^{\GSpin_4}(\ts)$ consists of quadratic characters of $F^{\times}.$
\end{rem}

Given $\vp \in \Phi(\GSpin_4),$ fix the lift 
\[ 
\tvp=\tvp_1 \otimes \tvp_2 \in \Phi(\GL_2\times \GL_2) 
\] 
with $\tvp_i \in \Phi(\GL_2)$ such that $\vp = pr_4 \circ \tvp,$ as in Section \ref{const of L-packet for GSpin4}.
Let 
\[ 
\ts = \ts_1 \boxtimes \ts_2 \in \Pi_{\tvp}(\GL_2\times \GL_2) 
\] 
be the unique member such that $\{\ts_i\} = \Pi_{\tvp_i}(\GL_2).$
Recall (see \cite[Section 5]{gtsp10}) that an irreducible $L$-parameter $\phi \in \Phi(\GL_2)$ 
is called \textit{primitive} if the restriction $\phi|_{W_F}$ is not of the form $\Ind_{W_E}^{W_F} \theta$ for some finite extension $E$ over $F.$ 
Moreover, $\phi$ is called \textit{dihedral} with respect to (w.r.t.) a quadratic extension $E$ over $F$ if 
$\phi|_{W_F} \s \Ind_{W_E}^{W_F} \theta$ or equivalently if 
$(\phi|_{W_F}) \otimes \omega_{E/F} \s (\phi|_{W_F})$ ($\Leftrightarrow \phi \otimes \omega_{E/F} \s \phi$) 
for a quadratic character $\omega_{E/F}$ corresponding to quadratic $E/F$ via the local class field theory.
A primitive representation exists only when $p$ divides $\dim \phi$ (\cite{koch77}). 
We can now make the following remarks.

\begin{rem} \label{last rem for card GSpin4}
If $\ts_2 \s \ts_1\teta$ for some $\teta \in (F^{\times})^D,$ 
we know that $\tvp_1$ is dihedral w.r.t. one quadratic character 
(respectively, primitive, dihedral w.r.t. three quadratic characters) if and only if $\tvp_2$ is dihedral 
(respectively, primitive, dihedral w.r.t. three quadratic characters).
\end{rem}

\begin{rem} \label{cor for I(GSpin4)}
Let $\vp \in \Phi(\GSpin_4)$ be irreducible.  Then $\tvp,$ $\tvp_1,$ and $\tvp_2$ are all irreducible.
Combining Proposition \ref{form of characters for I(GSpin4)}, Remarks \ref{rem for case I II III} and 
\ref{last rem for card GSpin4}, and \cite[Porposition 6.3]{gtsp10}, we conclude the following.
When $\ts_2 \s \ts_1\teta$ for some $\teta \in (F^{\times})^D,$ we have
\[ 
I^{\GSpin_4}(\ts) \s 
\left\{ 
\begin{array}{llll}
         \{1\}, & 
         \mbox{if $\tvp_1$ is primitive or non-trivial on $\SL_2(\CC)$ (hence so is $\tvp_2$)};\\
         \ZZ/2\ZZ, &  
         \mbox{if $\tvp_1$ is dihedral w.r.t. one quadratic extension (hence so is $\tvp_2$)};\\ 
         (\ZZ/2\ZZ)^2, &  
         \mbox{if $\tvp_1$ is dihedral w.r.t. three quadratic extensions (hence so is $\tvp_2$)}.
\end{array} \right. 
\]
When $\ts_2 \not\s \ts_1\teta$ for any $\teta \in (F^{\times})^D$, an analogous assertion holds, but it 
would depend on the individual parameters and not just whether they are primitive or dihedral.  In that case, we have  
\[ 
I^{\GSpin_4}(\ts) \s \{1\},~ \ZZ/2\ZZ, ~\text{or } (\ZZ/2\ZZ)^2.
\]
\end{rem}
\begin{rem} \label{rem 2 for I(GSpin4)}
When $\tvp_i$ is reducible, 
$\ts_i$ is either the Steinberg representation twisted by a character or an irreducibly induced representation from the Borel subgroup of $\GL_2.$ 
Since the number of irreducible constituents in $\Res_{\SL_2}^{\GL_2} (\ts_i)$ is $\leq 2,$ we have 
$I^{\SL_2}(\ts_i) \s \{1\}, ~\text{or } \ZZ/2\ZZ.$
This implies, by Proposition \ref{form of characters for I(GSpin4)}, that 
\[
I^{\GSpin_4}(\ts) \s \{1\}, ~ \text{or } \ZZ/2\ZZ.
\]
\end{rem}
We now describe the central extension $\cS_{\vp, \scn}$ for $\GSpin_4.$ 
For irreducible $\vp \in \Phi(\GSpin_4),$ from \eqref{crutial iso for gspin4} and Remark \ref{cor for I(GSpin4)}, we have $\cS_{\vp} \s 1, ~~ \ZZ/2\ZZ,$ or  $(\ZZ/2\ZZ)^2.$ Since $\vp$ is elliptic and 
$\widehat Z_{\vp, \scn}
 =  Z(\Spin_4(\CC))= \mu_2(\CC) \times \mu_2(\CC),$ the exact sequence \eqref{central ext GSpin4} becomes
\[
1 \longrightarrow \mu_2(\CC) \times \mu_2(\CC)   \longrightarrow 
\cS_{\vp, \scn} \subset {\Spin}_4(\CC) = {\SL}_2(\CC) \times {\SL}_2(\CC)  \longrightarrow \cS_{\vp} \subset  {\PSL}_2(\CC) \times {\PSL}_2(\CC) \longrightarrow 1.
\]
When $\cS_{\vp} \s 1,$ it is obvious that $\cS_{\vp, \scn} \s (\ZZ/2\ZZ)^2.$ 
When $\cS_{\vp} \s \ZZ/2\ZZ,$ 
since $(\ZZ/2\ZZ)^2$ is embedded in $Z(\cS_{\vp, \scn}),$
 $\cS_{\vp, \scn}$ is an abelian group of order 8. Since $\cS_{\vp, \scn} \subset \cS_{\bar{\vp}, \scn}$ from \eqref{pre pre diagram gspin4}, using the fact that the group $\cS_{\bar{\vp}, \scn} \s (\ZZ/2\ZZ)^4$ (see the proof of \cite[Proposition 6.5]{choiymulti}), we have $\cS_{\vp, \scn} \s (\ZZ/2\ZZ)^3.$ 
When $\cS_{\vp} \s (\ZZ/2\ZZ)^2,$ since $\cS_{\bar{\vp}, \scn} \s Q_8 \times Q_8$ from \textit{loc. cit.}, $\cS_{\vp, \scn}$ is a subgroup of order 16 in $Q_8 \times Q_8.$
If $\cS_{\vp, \scn}$ is non-abelian, since $Z(\cS_{\vp, \scn}) \s (\ZZ/2\ZZ)^2,$  we have $\cS_{\vp, \scn} \s Q_8 \times \ZZ/2\ZZ.$
It thus follows that $\cS_{\vp, \scn}$ is an abelian subgroup in $Q_8 \times Q_8$ of order 16, or $Q_8 \times \ZZ/2\ZZ.$ 
For reducible $\vp \in \Phi(\GSpin_4),$ \eqref{crutial iso for gspin4} and Remark \ref{rem 2 for I(GSpin4)} provide $\cS_{\vp} \s 1$ or $\ZZ/2\ZZ.$ Since $(\ZZ/2\ZZ)^2$ is embedded in $Z(\cS_{\vp, \scn}),$  $\cS_{\vp, \scn}$ must be abelian of order 8.

Using Theorem \ref{1-1 for GSpin4},  we describe the sizes of $L$-packets for $\GSpin_{4},$ $\GSpin^{2,1}_4,$ and $\GSpin^{1,1}_4.$  
Form the previous paragraph, we note that $\cS_{\vp, \scn}$ is either abelian or isomorphic to $Q_8 \times \ZZ/2\ZZ.$ 
Thus, we have
\begin{equation} \label{size L-packets for GSpin4 1}
\left| \Pi_{\vp}({\GSpin}_4) \right| = \left| \Pi_{\vp}({\GSpin}^{2,1}_4) \right|  = \left| \Pi_{\vp}({\GSpin}^{1,1}_4) \right| = \left| \cS_{\vp}  \right| = 1, ~~2, \text{or} ~~4,
\end{equation}
according to 
whether 
$\cS_{\vp} \s \{1\},~ \ZZ/2\ZZ, ~\text{or } (\ZZ/2\ZZ)^2,$
except when $\cS_{\vp, \scn} \s Q_8 \times \ZZ/2\ZZ,$ in which case we have
\begin{equation} \label{size L-packets for GSpin4 2}
\left| \Pi_{\vp}({\GSpin}_4) \right| = 4, ~~ \left| \Pi_{\vp}({\GSpin}^{2,1}_4) \right|  = \left| \Pi_{\vp}({\GSpin}^{1,1}_4) \right|  = 1.
\end{equation}
Furthermore, using \eqref{multi and characters}, the multiplicity in restriction from $\GL_{\sharp}$ to $\GSpin_\sharp$ is 1 except when $\cS_{\vp, \scn} \s Q_8 \times \ZZ/2\ZZ,$ in which case the multiplicity is 2.

We should mention that, due to Remarks \ref{cor for I(GSpin4)} and \ref{rem 2 for I(GSpin4)}, 
the case of $\cS_{\vp, \scn} \s Q_8 \times \ZZ/2\ZZ$ occurs only if two conditions hold
: $\ts_2 \s \ts_1\teta$ for some $\teta \in (F^{\times})^D,$ and $\tvp_1$ is dihedral w.r.t. three quadratic extensions (hence so is $\tvp_2$).

\subsection{Properties of $\L$-maps for $\GSpin_{4}$ and its inner forms} \label{properties for gspin4}

The $\L$-maps defined in Section \ref{const of L-packet for GSpin4} satisfy a number of natural and expected properties which we now verify.  
In what follows, let $\L_{\sharp} \in \left\{\L_4, \L_4^{2,1}, \L_4^{1,1} \right\}$ as the case may be. 

\begin{pro} \label{discreteness for gspin4}
A representation $\sigma_{\sharp} \in \Irr({\GSpin}_\sharp)$ is essentially square integrable 
if and only if its $L$-parameter $\vp_{\sigma_{\sharp}} := \L_{\sharp}(\sigma_{\sharp})$ does not factor through any proper Levi subgroup of ${\GSO}_4(\CC).$
\end{pro}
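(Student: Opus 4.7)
The plan is to reduce the statement to the analogous (already known) property for $\GL_\sharp$ via the restriction technique, and then transfer the Levi factorization condition across the central isogeny $pr_4$.

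First, I would pick a lift $\ts \in \Irr(\GL_\sharp)$ with $\sigma_\sharp \hookrightarrow \Res^{\GL_\sharp}_{\GSpin_\sharp}(\ts)$. By Remark \ref{rem in rest from to}, $\sigma_\sharp$ is essentially square integrable if and only if $\ts$ is. Since $\GL_\sharp$ is either $\GL_2 \times \GL_2$, $\GL_2 \times \GL_1(D)$, or $\GL_1(D) \times \GL_1(D)$, the LLC for $\GL_n$ and its inner forms (\cite{ht01,he00,scholze13,hs11}), together with the local Jacquet--Langlands correspondence, gives that $\ts$ is essentially square integrable precisely when its parameter $\tvp_{\ts} \in \Phi(\GL_\sharp)$ does not factor through any proper Levi subgroup of $\widehat{\GL_\sharp} = \GL_2(\CC) \times \GL_2(\CC)$. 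By construction, $\vp_{\sigma_\sharp} = pr_4 \circ \tvp_{\ts}$.

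Next, I would match Levi subgroups across the projection $pr_4 : \GL_2(\CC) \times \GL_2(\CC) \twoheadrightarrow \GSO_4(\CC)$. Since $\ker(pr_4) = \{(z^{-1}I_2, zI_2): z \in \CC^\times\}$ is a central torus, every Levi subgroup of $\GL_2(\CC) \times \GL_2(\CC)$ contains $\ker(pr_4)$, and $pr_4$ induces an inclusion-preserving bijection between Levi subgroups of $\GL_2(\CC) \times \GL_2(\CC)$ and Levi subgroups of $\GSO_4(\CC)$, sending proper Levis to proper Levis. Therefore $\tvp_{\ts}$ factors through a proper Levi of $\GL_2(\CC) \times \GL_2(\CC)$ if and only if $\vp_{\sigma_\sharp}$ factors through a proper Levi of $\GSO_4(\CC)$. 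Chaining the three equivalences gives the desired statement.

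For the converse direction one must be mindful that the lift $\tvp$ of $\vp$ to $\GL_\sharp$ is not unique (it can be twisted by characters as in Section \ref{const of L-packet for GSpin4}), but any lift works since Levi factorization of $\vp$ forces Levi factorization of every lift via $pr_4^{-1}$ applied to the Levi. The main technical point (though not really an obstacle) is verifying the central-isogeny bijection between Levis, which is routine because $\ker(pr_4)$ is a torus contained in every maximal torus, so $pr_4$ induces a bijection of root data between proper parabolic/Levi subgroups on either side. No further input beyond the standing LLC for $\GL_\sharp$ and the restriction lemma is needed.
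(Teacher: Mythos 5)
Your proposal is correct and follows essentially the same route as the paper's proof: reduce to the lift $\ts$ via Remark \ref{rem in rest from to}, invoke the LLC for $\GL_\sharp$ and its inner forms, and then transfer the Levi-factorization condition through $pr_4$, which the paper dispatches with the phrase that $pr_4$ ``respects Levi subgroups.'' Your added justification of the Levi correspondence (the central torus $\ker(pr_4)$ lies in every Levi, giving an inclusion-preserving bijection) and your remark on the harmlessness of the non-uniqueness of the lift simply make explicit what the paper leaves implicit.
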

\begin{proof}
By the definition of $\L_{\sharp},$ the representation $\sigma_{\sharp}$ is an irreducible constituent of the restriction 
$\Res^{\GL_{\sharp}}_{{\GSpin}_\sharp} (\ts_{\sharp})$ for some $\ts_{\sharp} \in \Irr({\GL}_\sharp).$
As recalled in Remark \ref{rem in rest from to}, $\sigma_{\sharp}$ is essentially square integrable representation if and only if $\ts_{\sharp}$ is so.  
By the LLC for $\GL_\sharp$ and its inner forms \cite{ht01, he00, scholze13, hs11}, this is the case if and only if 
its parameter $\tvp_{\sigma_{\sharp}}:=\L(\ts_{\sharp})$ does not factor through any proper Levi subgroup of ${\GL}_\sharp(\CC).$  
Finally, this is the case if and only if $\vp_{\sigma_{\sharp}}$ does not because the projection $pr_4$ in \eqref{ex seq L-gp for GSpin4} 
respects Levi subgroups. 
\end{proof}
\begin{rem}
In the same way as in the proof of Proposition \ref{discreteness for gspin4}, we have that 
a given $\sigma_{\sharp} \in \Irr({\GSpin}_\sharp)$ is tempered if and only if 
the image of its $L$-parameter $\vp_{\sigma_{\sharp}} := \L_{\sharp}(\sigma_{\sharp})$ in ${\GSO}_4(\CC)$ is bounded. 
\end{rem}

Due to the fact that restriction of representations preserves the intertwining operator and 
the Plancherel measure (see \cite[Section 2.2]{choiy1}), our construction of the $L$-packets in Section \ref{const of L-packet for GSpin4} 
gives the following result.

\begin{pro} \label{pm for gspin4}
Let $\vp \in \Phi_{\disc}(\GSpin_\sharp)$ be given. For any $\sigma_1, \sigma_2 \in \Pi_{\vp}(\GSpin_\sharp),$ 
we have the equality of the Plancherel measures
\begin{equation} \label{equlity one}
\mu_M(\nu, \tau \boxtimes \si_1, w) = \mu_M(\nu, \tau \boxtimes \si_2, w),
\end{equation}
where $M$ is an $F$-Levi subgroup of an $F$-inner form of $\GSpin_{2n}$ of the form of the product of $\GSpin_\sharp$ 
and copies of $F$-inner forms of $\GL_{m_i},$ $\tau \boxtimes \si_1, \tau \boxtimes \si_2 \in \Pi_{\disc}(M),$ 
$\nu \in \mathfrak{a}^{*}_{M, \CC},$ and $w \in W_M$ with $^wM = M.$
Further, it is a consequence of the equality of the Plancherel measures that the Plancherel measure is also preserved between $F$-inner forms 
in the following sense.  Let $\GSpin'_\sharp$ be an $F$-inner form of $\GSpin_\sharp.$ 
Given $\vp \in \Phi_{\disc}(\GSpin_\sharp),$ for any $\sigma \in \Pi_{\vp}(\GSpin_\sharp)$ and $\sigma' \in \Pi_{\vp}(\GSpin'_\sharp),$ we have 
\begin{equation} \label{equlity two}
\mu_M(\nu, \tau \boxtimes \si, w) = \mu_{M'}(\nu, \tau' \boxtimes \si', w),
\end{equation}
where $M'$ is an $F$-inner form of $M,$ $\tau \boxtimes \si \in \Pi_{\disc}(M),$  $\tau' \boxtimes \si' \in \Pi_{\disc}(M'),$ and $\tau$ and $\tau'$ have the same $L$-parameter. 
\end{pro}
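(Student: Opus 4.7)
The plan is to lift everything to the ambient $\GL_\sharp$-type parent group, where the analogous Plancherel-measure identities are either trivial (since $L$-packets for general linear groups are singletons) or follow from preservation under the local Jacquet--Langlands correspondence, and then to descend via restriction.

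First I would realize $M$ as $M = G \cap \widetilde{M}$, where $\widetilde{M}$ is an $F$-Levi subgroup of a suitable $F$-inner form of the appropriate parent $\GL_2 \times \GL_2 \times \prod \GL_{m_i}$ (in the $\GSpin_4$ case); by the structure of Levi subgroups and Section \ref{gp structure}, one has $M_{\der} = \widetilde{M}_{\der}$, so the restriction formalism of Section \ref{results in rest} applies to the pair $(\widetilde{M}, M)$. Lift $\tau \boxtimes \si$ to an irreducible $\widetilde{\tau} \boxtimes \widetilde{\si} \in \Irr(\widetilde{M})$ containing it in its restriction, and fix a lift $\widetilde{w}$ of $w$. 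The key input is the preservation of Plancherel measure under restriction to a subgroup sharing the derived group \cite[Section 2.2]{choiy1}, which yields
\[
\mu_M(\nu, \tau \boxtimes \si, w) \;=\; \mu_{\widetilde{M}}(\nu, \widetilde{\tau} \boxtimes \widetilde{\si}, \widetilde{w}).
\]

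For the equality \eqref{equlity one}, by our construction of the $L$-packet in Section \ref{const of L-packet for GSpin4} the representations $\si_1, \si_2 \in \Pi_{\vp}(\GSpin_\sharp)$ are both constituents of the restriction of a common irreducible $\widetilde{\si} \in \Pi_{\widetilde{\vp}}(\GL_\sharp)$; with a common lift $\widetilde{\tau}$ of $\tau$, both left-hand sides equal the same quantity $\mu_{\widetilde{M}}(\nu, \widetilde{\tau} \boxtimes \widetilde{\si}, \widetilde{w})$, and equality follows. If instead one prefers distinct lifts, they differ by twisting with a character of $\GL_\sharp/\GSpin_\sharp \cong F^\times$, which only shifts the $\nu$-variable and so leaves the Plancherel measure invariant.

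For the equality \eqref{equlity two}, I would use that $\tau$ and $\tau'$ (respectively $\si$ and $\si'$) share an $L$-parameter, so after normalizing the central $\GL_1$-character one may choose lifts $\widetilde{\tau}, \widetilde{\tau}'$ (respectively $\widetilde{\si}, \widetilde{\si}'$) that correspond under the local Jacquet--Langlands transfer between the appropriate inner forms of $\GL_\sharp$. The Plancherel measure is preserved under this transfer \cite{dkv, ba08}, so
\[
\mu_{\widetilde{M}}(\nu, \widetilde{\tau} \boxtimes \widetilde{\si}, \widetilde{w}) \;=\; \mu_{\widetilde{M}'}(\nu, \widetilde{\tau}' \boxtimes \widetilde{\si}', \widetilde{w}),
\]
and applying Step 2 on both sides of the inner twisting descends this equality to $M$ and $M'$. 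The only delicate bookkeeping is to coordinate the lifts across the two inner forms so that JLC applies on the nose; this is harmless because any two lifts of a given $\GSpin$-parameter differ by a character inflated from $\GL_\sharp / \GSpin_\sharp$, and the dependence of the Plancherel measure on such a twist is absorbed into $\nu$. This is the only point requiring care, but no new ideas beyond the restriction formalism already set up in Sections \ref{const of L-packet for GSpin4}--\ref{para for GSpin4}.
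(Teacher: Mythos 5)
Your proposal is correct and follows essentially the same route as the paper: equality \eqref{equlity one} comes from the fact that $\si_1,\si_2$ are constituents of the restriction of a common lift $\ts$ together with the invariance of Plancherel measures under restriction from \cite{choiy1}, and equality \eqref{equlity two} comes from matching lifts on the two inner forms of $\GL_\sharp$ and transferring Plancherel measures there before descending again by \cite{choiy1}. The only differences are cosmetic: the paper cites \cite[Lemma 2.1]{ac89} rather than the Jacquet--Langlands references for the $\GL$-inner-form Plancherel transfer, and your side remark that a character twist of the lift is ``absorbed into $\nu$'' is imprecise for ramified twists but unnecessary, since the restriction-invariance already makes the Plancherel measure independent of the choice of lift (Proposition \ref{pro for lifting}).
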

\begin{proof}
Since $\sigma_1, \sigma_2 \in \Pi_{\vp}(\GSpin_\sharp)$ are in the same restriction from an irreducible representation from $\GL_\sharp(F)$ to $\GSpin_\sharp(F),$ by \cite[Proposition 2.4]{choiy1}, we have \eqref{equlity one}.
Similarly, we note that $\si$ and $\si'$ have liftings $\ts$ and $\ts'$ in $\GL_\sharp(F)$ and $\GL'_\sharp(F),$ respectively.
Since $\tau \boxtimes \ts$ and $\tau' \boxtimes \ts'$ have the same Plancherel measures by \cite[Lemma 2.1]{ac89}, again by \cite[Proposition 2.4]{choiy1}, we have \eqref{equlity two}.
\end{proof}

Finally, we make a remark about the generic representations in $L$-packets. 
\begin{rem} \label{zeta4}
Consider the case when $\GSpin_\sharp$ is the split $\GSpin_4.$  Since $\zeta_\sharp = \mathbbm{1}$ now, 
Theorem \ref{1-1 for GSpin4} implies that we have 
\begin{equation} \label{generic bij 4}
\Pi_{\vp}({\GSpin}_4) \, \overset{1-1}{\longleftrightarrow} \, \Irr(\cS_{\vp}) \, \s \, \Irr(I^{\GSpin_4}(\ts)).
\end{equation}
Suppose that there is a generic representation in $\Pi_{\vp}({\GSpin}_4)$ with respect to a given Whittaker data for $\GSpin_4.$ 
Then, by \cite[Chapter 3]{hs11}, we can normalize  the bijection \eqref{generic bij 4} so that the trivial character 
$\mathbbm{1} \in \Irr(\cS_{\vp})$ maps to the generic representation in $\Pi_{\vp}({\GSpin}_6).$ 
\end{rem}
%

\subsection{Preservation of Local Factors via $\L_4$} \label{L-factors4}

We verify that in the case of the split form $\GSpin_4$ the map $\L_4$ preserves the local $L$-, $\epsilon$-, and $\gamma$-factors 
when these local factors are defined.  On the representation side, these factors are defined via the Langlands-Shahidi method when 
the representations are generic (or non-supercuspidal induced from generic via the Langlands classification).  On the parameter side, 
these factors are Artin factors associated to the representations of the Weil-Deligne group of $F.$ 

Since the Langlands-Shahidi method is available for generic data, we do not yet have a counterpart for the local factors for the 
non quasi-split $F$-inner forms of $\GSpin_4.$  This is the reason why we are limiting ourselves to the case of the split form below; however, 
cf. Remark \ref{inner-form-preserve} below. 

\begin{pro} \label{preserve4}
Let $\tau$ be an irreducible admissible representation of $\GL_r(F),$ $r \ge 1,$ and 
let $\si$ be an irreducible admissible representation of $\GSpin_4(F),$ which we assume to be either 
$\psi$-generic or non-supercuspidal if $r > 1.$  
Here, generic is defined with respect to a non-trivial additive character $\psi$ of $F$ in the usual way.  

Let $\vp_\tau$ be the $L$-parameter of $\tau$ via the LLC for $\GL_r(F)$ and let $\vp_\si = \L_4 (\si).$ 
Then, 
\begin{eqnarray*} 
\gamma(s, \tau \times \si, \psi) &=& \gamma(s, \vp_\tau \otimes \vp_\sigma, \psi), \\ 
L(s, \tau \times \si) &=& L(s, \vp_\tau \otimes \vp_\sigma), \\ 
\epsilon(s, \tau \times \si, \psi) &=& \epsilon(s, \vp_\tau \otimes \vp_\sigma, \psi). 
\end{eqnarray*} 
The local factors on the left hand side are those attached by Shahidi \cite[Theorem 3.5]{shahidi90annals} to the 
$\psi$-generic representations of the standard Levi subgroup $\GL_r(F) \times \GSpin_4(F)$ in $\GSpin_{2r+4}(F)$ 
and the standard representation of the dual Levi $\GL_r(\CC) \times \GSO(4,\CC),$ and extended to all non-generic non-supercuspidal 
representations via the Langlands classification and the multiplicativity of the local factors \cite[\S 9]{shahidi90annals}.  
The factors on the right hand side are Artin local factors associated to the given representations of the Weil-Deligne group of $F.$  
\end{pro}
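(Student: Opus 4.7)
The plan is to reduce the three identities for $\GSpin_4$ to the corresponding known identities for $\GL_n$, by invoking the generic functorial transfer from $\GSpin_4$ to $\GL_4$ established by Asgari--Shahidi \cite{ash06duke, ash14manuscripta, acs-local}. The first step is to reduce to the case where $\si$ is $\psi$-generic. If $r > 1$ and $\si$ is non-generic non-supercuspidal, the Langlands classification writes $\si$ as the Langlands quotient of a representation induced from a tempered $\psi$-generic representation on a proper standard Levi of $\GSpin_4$. Multiplicativity of the Shahidi factors under parabolic induction \cite[\S 9]{shahidi90annals}, combined with the additivity of Artin factors for the matching decomposition of $\vp_\si$ through the dual Levi (this decomposition is guaranteed by the Levi-compatibility of $\L_4$ already used in the proof of Proposition \ref{discreteness for gspin4}), reduces the claim to the generic case; the case $r=1$ requires no such reduction.

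Assuming $\si$ is $\psi$-generic, the generic transfer cited above produces an irreducible admissible representation $\Sigma$ of $\GL_4(F)$ whose Langlands parameter $\vp_\Sigma$ is the composition of $\vp_\si = \L_4(\si)$ with the standard embedding $\widehat{\GSpin_4} = \GSO_4(\CC) \hookrightarrow \GL_4(\CC)$, and which satisfies
\begin{equation*}
\gamma(s, \tau \times \si, \psi) = \gamma(s, \tau \times \Sigma, \psi),
\end{equation*}
together with the analogous equalities for $L$ and $\epsilon$. Applying the LLC for $\GL_r$ and $\GL_4$ \cite{ht01, he00, scholze13}, which preserves all three families of Rankin--Selberg local factors, yields
\begin{equation*}
\gamma(s, \tau \times \Sigma, \psi) = \gamma(s, \vp_\tau \otimes \vp_\Sigma, \psi) = \gamma(s, \vp_\tau \otimes \vp_\si, \psi),
\end{equation*}
where the last equality is by the identification of $\vp_\Sigma$ recalled above. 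The identical argument handles $L$ and $\epsilon$.

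The principal obstacle is verifying that the generic transfer of \cite{ash06duke, ash14manuscripta, acs-local} indeed produces a $\Sigma$ whose $\GL_4$-parameter equals $\vp_\si$ composed with the standard embedding of $\GSO_4(\CC)$. This is the local Langlands functoriality content of those references: both the transfer $\si \mapsto \Sigma$ and the map $\L_4$ pass through the same lift $\ts = \ts_1 \boxtimes \ts_2 \in \Irr(\GL_2 \times \GL_2)$ of $\si$, and the four-dimensional representation $\tvp_1 \otimes \tvp_2$ of $W_F \times \SL_2(\CC)$ attached to that lift coincides, via the tensor-product embedding $\GL_2(\CC) \times \GL_2(\CC) \to \GL_4(\CC)$ that realizes $\GSO_4(\CC) \hookrightarrow \GL_4(\CC)$, with the composition of $\vp_\si$ with the standard representation of $\GSO_4(\CC)$. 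Once this parameter compatibility is recorded, the three-step combination above completes the proof.
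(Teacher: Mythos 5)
Your reduction to the generic case via the Langlands classification and multiplicativity matches the paper, but the core of your argument has a genuine gap, and it sits exactly where you flag ``the principal obstacle.'' You need two things from the transfer $\si \mapsto \Sigma$ of \cite{acs-local,ash06duke,ash14manuscripta}: (a) the equality of the $\gamma$-, $L$-, and $\epsilon$-factors $\gamma(s,\tau\times\si,\psi)=\gamma(s,\tau\times\Sigma,\psi)$, and (b) the identification $\vp_\Sigma = \std\circ\vp_\si$ with $\vp_\si = \L_4(\si)$. These cannot both be quoted: the transfer in those references is constructed and characterized for general $\GSpin$ groups by global/converse-theorem methods and $\gamma$-factor identities, not via the accidental isomorphism and the lift $\ts=\ts_1\boxtimes\ts_2\in\Irr(\GL_2\times\GL_2)$, and the map $\L_4$ is defined only in the present paper, so no cited result can assert that the transfer ``passes through the same lift $\ts$.'' If instead you \emph{define} $\Sigma$ as the $\GL_4(F)$ representation with parameter $\std\circ\vp_\si = \tvp_1\otimes\tvp_2$ (so that (b) is automatic), then (a) becomes precisely the generic case of Proposition \ref{preserve4}, and citing the transfer literature for it is circular; pinning down the parameter of an abstractly given transfer by $\gamma$-identities and the local converse theorem is likewise equivalent to the statement being proved. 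So the parameter compatibility is not ``recorded,'' it is the whole difficulty, and your proposal supplies no argument for it.

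The paper closes this gap without ever invoking the transfer. Its key lemma is that the Langlands--Shahidi factors are preserved under passing to irreducible constituents of restrictions, for Levi pairs $M=G\cap\tM$ inside ambient groups $G\subseteq\tG$ satisfying \eqref{cond on G}: applied to $\GL_r\times\GSpin_4 \subset \GL_r\times\GL_2\times\GL_2$, this gives $\gamma(s,\tau\times\si,\psi)=\gamma(s,\tau\times\ts,\psi)$ (and likewise for $L$ and $\epsilon$), where $\ts$ is the lift of $\si$ used to define $\L_4$. One is then entirely in the $\GL$ setting, where the LLC \cite{ht01,he00,scholze13} identifies the Shahidi factor with the Artin factor of $\vp_\tau\otimes\tvp_{\ts}$, and the commutative diagram \eqref{projection parameters 4} together with the fact that $pr_4$ in \eqref{ex seq L-gp for GSpin4} does not alter the relevant standard (tensor-product) representation gives $\gamma(s,\vp_\tau\otimes\tvp_{\ts},\psi)=\gamma(s,\vp_\tau\otimes\vp_\si,\psi)$. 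If you want to salvage your route, you would have to prove the compatibility of the Asgari--Shahidi/ACS transfer with $\L_4$ separately, and the natural way to do that is exactly the restriction argument above --- at which point the detour through the transfer is unnecessary.
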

\begin{proof}
The proof relies on two key properties of the local factors defined by Shahidi, namely that they are multiplicative (with respect to parabolic induction) 
and that they are preserved under taking irreducible constituents upon restriction.  

To be more precise, let 
$(G, M)$ and $(\tG, \tM)$ be a pair of ambient groups and standard Levi subgroups as in the Langlands-Shahidi machinery.  Assume that $G$ and 
$\tG$ satisfy \eqref{cond on G}.  
Moreover, assume that $M = G \cap \tM.$ 
If $\si$ and $\ts$ are $\psi$-generic representations of $M(F)$ and $\tM(F),$ 
respectively, such that 
\[
\sigma \hookrightarrow {\Res}_{M(F)}^{\tM(F)}(\ts),  
\]
then 
\[\gamma(s, \si, r, \psi) = \gamma(s, \ts, \tilde{r}, \psi), \\ 
\]
and similarly for the $L$- and $\epsilon$-factors.  Here, by $r$ we denote any of the irreducible constituents of the adjoint action of the complex dual of 
$M$ on the Lie algebra of the dual of the unipotent radical of the standard parabolic of $G$ having $M$ as a Levi.  Also, $\tilde{r}$ denotes 
the corresponding irreducible constituent in the complex dual of $\tG.$  Below we only need the case where $r$ and $\tilde{r}$ are standard representations. 

For a precise description of the multiplicativity property in general we refer to \cite{shahidi90ps} 
and to \cite[\S 5]{asgari-cjm} for the specific case of $\GSpin$ groups.  

Getting back to the proof, if $\tau \boxtimes \si$ is non-generic and non-supercuspidal, then it is a quotient of a standard module of 
an induced representation $\operatorname{Ind}_{P}^{\GL_4 \times \GSpin_4} (\pi),$ where $\pi$ is an essentially tempered representation 
of the standard Levi subgroup of $P.$  However, such a standard Levi only involves $\GL$ factors in our case and hence $\pi$ is generic. 
As a result, we may define the local factors associated with $\tau \boxtimes \si$ via multiplicativity. 

By the above arguments we are reduced to the case where $\tau$ and $\si$ are generic supercuspidal. The proof now follows essentially 
from the LLC for the general linear groups and \eqref{projection parameters 4} because 
\begin{eqnarray*}
\gamma(s, \tau \times \si, \psi) &=& \gamma(s, \tau \times \ts, \psi) \\
&=& \gamma(s, \vp_\tau \otimes \tilde{\vp}_{\ts}, \psi) \\
&=& \gamma(s, \vp_\tau \otimes (pr_4 \circ \tilde{\vp}_{\ts}), \psi) \\
&=& \gamma(s, \vp_\tau \otimes \vp_\sigma, \psi), \\ 
\end{eqnarray*} 
and similarly for the $L$- and $\epsilon$-factors. 
\end{proof} 
\begin{rem} \label{inner-form-preserve}
While we can not make any statement regarding the case of general (non-generic) representations of $\GSpin_4(F)$ or other inner forms due 
to the current lack of a general satisfactory theory of local factors in this generality.  Our $L$-packets would satisfy properties analogous 
to the above if such a satisfactory local theory becomes available.  By satisfactory we refer to local factors that satisfy the 
natural properties expected of them such as the so-called 
``Ten Commandments'' for the $\gamma$-factors as in \cite[Theorem 4]{lapid-rallis}. 
\end{rem}
%

\section{Local Langlands Correspondence for \texorpdfstring{$\GSpin_6$}{GSpin6} and its inner forms} \label{LLC6}

In this final section, we establish the LLC for $\GSpin_6$ and its non quasi-split $F$-inner forms.  
The method is similar to the case of $\GSpin_4$ we presented in Section \ref{LLC4},  but with somewhat 
weaker results on cardinalities of $L$-packets.

\subsection{Construction of $L$-packets of $\GSpin_6$ and its inner forms} \label{const of L-packet for GSpin6}

The discussions on restriction in Section \ref{results in rest} along with 
the description of the structure of $\GSpin_6$ in \eqref{derived of 6} imply that 
given $\sigma \in \Irr(\GSpin_6),$ there is a lifting $\ts \in \Irr({\GL}_1 \times {\GL}_4)$ such that 
\[
\sigma \hookrightarrow {\Res}_{{\GSpin}_{6}}^{\GL_1 \times \GL_4}(\ts).
\]
By the LLC for $GL_n$ \cite{ht01, he00, scholze13}, we have a unique $\tvp_{\ts} \in \Phi({\GL}_1 \times {\GL}_4)$ corresponding to the representation $\ts.$

We now define a map 
\begin{eqnarray} \label{L map for GSpin6}
{\L}_{6} : {\Irr}({\GSpin}_{6}) & \longrightarrow & \Phi({\GSpin}_{6}) \\
\sigma & \longmapsto & pr_6 \circ \tvp_{\ts}.  \nonumber
\end{eqnarray} 
Note that $\L_{4}$ does not depend on the choice of the lifting $\ts$ because 
if $\ts' \in \Irr(\GL_1 \times \GL_4)$ is another lifting, we have $\ts' \s \ts \otimes \chi$ for some quasi-character $\chi$ on
\[
({\GL}_1(F) \times {\GL}_4(F))/{\GSpin}_6(F) \s F^{\times}.  
\]
As before, 
\[
F^{\times} \s H^1(F, \CC^{\times}),
\]
where $\CC^{\times}$ is as in \eqref{ex seq L-gp for GSpin6}.
The LLC for $\GL_1 \times \GL_4$ maps
$\ts'$ to $\tvp_{\ts} \otimes \chi,$ again employing $\chi$ for both the quasi-character and 
its parameter via Local Class Field Theory. 
Since $pr_6(\tvp_{\ts} \otimes \chi) = pr_6(\tvp_{\ts})$ by \eqref{ex seq L-gp for GSpin6}, 
the map $\L_{6}$ turns out to be well-defined.

As before, $\L_6$ is surjective 
because by Labesse's Theorem in 
Section \ref{thm by Labesse}, $\vp \in \Phi(\GSpin_{6})$ 
can be lifted to some $\tvp \in \Phi(GL_1 \times \GL_4).$ 
We then obtain $\ts \in \Irr(GL_1 \times \GL_4)$ via the LLC for $\GL_1 \times \GL_4.$   
Thus, any irreducible constituent in the restriction ${\Res}_{{\GSpin}_{6}}^{\GL_1 \times \GL_4}(\ts)$ 
has the image $\vp$ via the map $\L_6.$

For each $\vp \in \Phi({\GSpin}_{6}),$ we define the $L$-packet $\Pi_{\vp}({\GSpin}_{6})$ 
as the set of all inequivalent irreducible constituents of $\ts$
\begin{equation} \label{def of L-packet for GSpin6}
\Pi_{\vp}({\GSpin}_{6}):=\Pi_{\ts}({\GSpin}_6) = 
\left\{ \sigma \hookrightarrow {\Res}_{{\GSpin}_{6}}^{\GL_1 \times \GL_4}(\ts) \right\} \slash \s,
\end{equation}
where $\ts$ is the unique member in $\Pi_{\tvp}(\GL1 \times \GL_4)$ and 
$\tvp \in \Phi(\GL_1 \times \GL_4)$ is such that $pr_{6} \circ \tvp=\vp.$ 
By the LLC for $\GL_4$ and Proposition \ref{pro for lifting}, the fiber does not depends on the choice of $\tvp.$

We define the $L$-packets for the non quasi-split inner forms similarly.  Using the group structure described in Section \ref{gp structure},
given $\sigma_6^{2,0} \in \Irr({\GSpin}_6^{2,0}),$ there is a lifting $\ts_6^{2,0} \in \Irr({\GL}_1 \times {\GL}_2(D))$ such that 
\[
\sigma_6^{2,0} \hookrightarrow {\Res}_{{\GSpin}_6^{2,0}}^{{\GL}_1 \times {\GL}_2(D)}(\ts_6^{2,0}).
\]
Again, combining the LLC for $\GL_4$ and $\GL_2(D)$ \cite{hs11},  
we have a unique $\tvp_{\ts_6^{2,0}} \in \Phi({\GL}_1 \times {\GL}_2(D))$ corresponding to the representation $\ts_6^{2,0}.$
We thus define the following map  

\begin{eqnarray} \label{L map for inner 1 GSpin6}
{\L}_6^{2,0} : {\Irr}({\GSpin}_6^{2,0}) &\longrightarrow & \Phi({\GSpin}_6^{2,0}) \\ 
\sigma^{2,0}_6 & \longmapsto & pr_6 \circ \tvp_{\ts^{2,0}_6}. \nonumber
\end{eqnarray}
Again, it follows from the LLC for $\GL_1$ and $\GL_2(D)$ that this map is well-defined and surjective.

Likewise, for the other $F$-inner form $\GSpin_6^{1,0}$ of $\GSpin_6,$ we have a well-defined and surjective map
\begin{eqnarray} \label{L map for inner 2 GSpin6}
{\L}_6^{1,0} : {\Irr}({\GSpin}_6^{1,0}) & \longrightarrow & \Phi({\GSpin}_6^{1,0}) \\
\sigma^{1,0}_6 & \longmapsto & pr_6 \circ \tvp_{\ts^{1,0}_6}. \nonumber 
\end{eqnarray}

Again, we similarly define $L$-packets 
\begin{equation} \label{def of inner 1 GSpin6}
\Pi_{\vp}({\GSpin}^{2,0}_6) = \Pi_{\ts^{2,0}_6}({\GSpin}^{2,0}_6), \quad \vp \in \Phi({\GSpin}^{2,0}_6)
\end{equation}
and 
\begin{equation} \label{def of inner 2 GSpin6}
\Pi_{\vp}({\GSpin}^{1,0}_6) = \Pi_{\ts^{1,0}_6}({\GSpin}^{1,0}_6), \quad \vp \in \Phi({\GSpin}^{1,0}_6). 
\end{equation}
These $L$-packet do not depend on the choice of $\tvp$ for similar reasons.

\subsection{Internal structure of $L$-packets of $\GSpin_6$ and its inner forms} \label{para for GSpin6}
We continue to employ the notation of Section \ref{general LLC} in this section. 
For simplicity of notation, we shall write ${\GSpin}_\flat$ for the split
${\GSpin}_{6},$ and its non quasi-split $F$-inner forms ${\GSpin}_6^{2,0}$ and ${\GSpin}_6^{1,0}.$
Likewise, we shall write ${\SL}_\flat$ and ${\GL}_\flat$ for corresponding groups in 
\eqref{derived of 6}, \eqref{derived of 6 inner 1}, and \eqref{derived of 6 inner 2} so that
we have
\begin{equation} \label{flat}
{\SL}_\flat \subset {\GSpin}_\flat \subset  {\GL}_\flat 
\end{equation}
in all cases. 
Recall from Section \ref{L-groups} that 
\begin{align*}
(\widehat{{\GSpin}_\flat})_{\ad} = {\PSO}_6(\CC) \s  {\PGL}_4(\CC), \\ 
(\widehat{{\GSpin}_\flat})_{\scn} ={\Spin}_6(\CC) \s {\SL}_4(\CC), \\
Z((\widehat{{\GSpin}_\flat})_{\scn}) = Z((\widehat{{\GSpin}_\flat})_{\scn})^{\Gamma} \s \mu_4(\CC).
\end{align*}

Let $\vp \in \Phi({\GSpin}_\flat)$ be given.
We fix a lifting $\tvp \in \Phi({\GL}_\flat)$ via the surjective map 
$\widehat{{\GL}_\flat}  \longrightarrow \widehat{{\GSpin}_\flat}$ (cf. Theorem \ref{thm by Labesse}). 
We have
\begin{align*}
S_{\vp} & \subset {\PSL}_4(\CC),  \\
S_{\vp, \scn} & \subset {\SL}_4(\CC).
\end{align*}
We then have (again by \eqref{central ext}) a central extension 
\begin{equation} 
1 \longrightarrow \widehat Z_{\vp, \scn}  \longrightarrow 
\cS_{\vp, \scn} \longrightarrow \cS_{\vp} \longrightarrow 1.
\end{equation}
Let $\zeta_{6},$ $\zeta_6^{2,0},$ and $\zeta_6^{1,0}$ be characters on $Z((\widehat{{\GSpin}_\flat})_{\scn})$ 
which respectively correspond to $\GSpin_{6},$ $\GSpin_6^{2,0},$ and $\GSpin_6^{1,0}$ via the Kottwitz isomorphism \cite[Theorem 1.2]{kot86}. 
Note that 
\[ 
\zeta_{6} = \mathbbm{1}, \quad \zeta_6^{2,0} = \sgn, \quad \mbox{ and } \quad \zeta_6^{1,0} = \hat{\zeta}_4,  
\] 
where $\sgn$ is the non-trivial character of order 2 on $\mu_4(\CC)$ and  
$\hat{\zeta}_4$ is the non-trivial character of order 4 on $\mu_4(\CC)$ whose restriction to $\mu_2$ equals $\sgn.$

\begin{thm} \label{1-1 for GSpin6}
Given an $L$-parameter $\vp \in \Phi(\GSpin_{\flat}),$ 
there is a one-one bijection 
\begin{eqnarray*}
\Pi_{\vp}({\GSpin}_{\flat}) & \, \overset{1-1}{\longleftrightarrow} \,  & \Irr(\cS_{\vp, \scn}, \zeta_{\flat}), \\
 \sigma &\mapsto & \rho_{\sigma},
\end{eqnarray*}
such that we have the following decomposition
\[
V_{\ts} ~ ~ \s \bigoplus_{\sigma \in \Pi_{\vp}({\GSpin}_{\flat})} \rho_{\sigma} \boxtimes  \si
\]
as representations of the direct product $\cS_{\vp, \scn} \times {\GSpin}_{\flat}(F),$
where $\ts \in \Pi_{\tvp}(\GL_{\flat})$ is an extension of $\si \in \Pi_{\vp}({\GSpin}_{\flat})$ to $\GL_{\flat}(F)$ 
as in Section \ref{results in rest} and $\tvp \in \Phi(\GL_{\flat})$ is a lifting of $\vp \in \Phi(\GSpin_{\flat}).$ 
Here, $\zeta_{\flat} \in \left\{ \zeta_{6}, \zeta_6^{2,0}, \zeta_6^{1,0} \right\}$ according to which inner form 
$\GSpin_{\flat}$ is.  
\end{thm}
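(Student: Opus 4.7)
The proof will follow the same architecture as the proof of Theorem \ref{1-1 for GSpin4}, with $\GL_2 \times \GL_2, \SL_2 \times \SL_2$ replaced throughout by $\GL_\flat, \SL_\flat$ and $pr_4$ by $pr_6$. Given $\vp \in \Phi(\GSpin_\flat)$, I would first use Labesse's theorem (Section \ref{thm by Labesse}) to fix a lifting $\tvp \in \Phi(\GL_\flat)$ with $pr_6 \circ \tvp = \vp$, and then descend via $\widehat{\GL_\flat} \twoheadrightarrow \widehat{\GSpin_\flat} \twoheadrightarrow \widehat{\SL_\flat} = \PGL_4(\CC)$ to obtain $\bar\vp \in \Phi(\SL_\flat)$, giving the analogue of the commutative diagram \eqref{projection parameters 4}. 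The LLC for $\GL_\flat$ (combining \cite{ht01, he00, scholze13} and \cite{hs11}) then produces the unique $\ts \in \Pi_{\tvp}(\GL_\flat)$ extending each member of $\Pi_\vp(\GSpin_\flat)$.

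Next, I would introduce the stabilizer groups
\begin{align*}
X^{\GSpin_\flat}(\tvp) &:= \{ a \in H^1(W_F, \CC^\times) : a\tvp \s \tvp \}, \\
X^{\SL_\flat}(\tvp) &:= \{ a \in H^1(W_F, \CC^\times \times \CC^\times) : a\tvp \s \tvp \},
\end{align*}
together with their representation-side counterparts $I^{\GSpin_\flat}(\ts)$ and $I^{\SL_\flat}(\ts)$, using the identifications $\GL_\flat(F)/\GSpin_\flat(F) \s F^\times$ from the exact sequences \eqref{convenient exact sequence GSpin6}, \eqref{convenient exact sequence 6 inner 1}, \eqref{convenient exact sequence 6 inner 2}, and $\GL_\flat(F)/\SL_\flat(F) \s F^\times \times F^\times$ from Galois cohomology. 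Local class field theory together with the LLC for $\GL_\flat$ gives $X^{?}(\tvp) \s I^{?}(\ts)$ in both cases. Applying \cite[Lemma 5.3.4]{chaoli} (now with $\tvp$ and $\vp$ playing the roles of $\phi$ and $\phi^\sharp$) and using that $\cS_{\tvp}$ is trivial, one obtains the key isomorphism $\cS_\vp \s I^{\GSpin_\flat}(\ts)$, and consequently the inclusions $\cS_\vp \subset \cS_{\bar\vp}$, $S_{\vp,\scn} \subset S_{\bar\vp,\scn}$, and a surjection $\widehat Z_{\vp,\scn} \twoheadrightarrow \widehat Z_{\bar\vp,\scn}$. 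These fit into a commutative diagram of central extensions analogous to \eqref{pre pre diagram gspin4}.

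I would then invoke the Hiraga-Saito homomorphism $\Lambda_{\SL_4} : \cS_{\bar\vp,\scn} \rightarrow \mcA(\ts)$ from Section \ref{for sl(m,D)}, with $n=4$ and $d=1,2,4$ according to which inner form $\GSpin_\flat$ is, and restrict it to $\cS_{\vp,\scn}$. Writing $\mcA^{\GSpin_\flat}(\ts)$ for the subgroup of $\mathrm{Aut}_\CC(V_{\ts})$ generated by $\CC^\times$ and $\{I_\chi : \chi \in I^{\GSpin_\flat}(\ts)\}$, and using the commutativity just established together with the definition of $\Lambda_{\SL_4}$ in \eqref{a diagram}, one checks that the image of $\Lambda_\flat := \Lambda_{\SL_4}|_{\cS_{\vp,\scn}}$ is exactly $\mcA^{\GSpin_\flat}(\ts)$, yielding an analogue of diagram \eqref{diagram gspin4} with left-hand character $\zeta_\flat$. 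The asserted bijection is then the composition
\begin{equation*}
\Irr(\cS_{\vp,\scn}, \zeta_\flat) \overset{1-1}{\longleftrightarrow} \Irr(\mcA^{\GSpin_\flat}(\ts), \mathrm{id}) \overset{1-1}{\longleftrightarrow} \Pi_\vp(\GSpin_\flat),
\end{equation*}
where the first map is induced by pullback along $\Lambda_\flat$ and the second comes from the Hiraga-Saito decomposition \eqref{useful decomp}; the corresponding decomposition of $V_{\ts}$ as a $\cS_{\vp,\scn} \times \GSpin_\flat(F)$-module follows immediately.

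The main technical point, and the place where this argument is genuinely weaker than its $\GSpin_4$ counterpart, is the verification that $\Lambda_\flat$ surjects onto $\mcA^{\GSpin_\flat}(\ts)$ and that the left-hand character in the diagram agrees with $\zeta_\flat$ under the natural identification $Z((\widehat{\GSpin_\flat})_\scn) = \mu_4(\CC) = Z(\widehat{\SL_4})$ given by the Kottwitz isomorphism. Unlike the $\GSpin_4$ case, I do not expect to be able to describe $\cS_{\vp,\scn}$ further or to enumerate all sizes of $L$-packets (beyond the divisibility bound coming from Galois cohomology as in Proposition \ref{pro size 4}), because a full classification of irreducible $L$-parameters in $\Phi(\SL_4)$ analogous to \cite[Proposition 6.3]{gtsp10} is not currently available; this limitation is intrinsic to the method but does not obstruct the bijection itself.
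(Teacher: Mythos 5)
Your proposal is correct and follows essentially the same route as the paper's own proof, which simply transports the argument for Theorem \ref{1-1 for GSpin4} to the $\flat$ setting: Labesse lifting, the groups $X^{\GSpin_\flat}(\tvp)$, $I^{\GSpin_\flat}(\ts)$ and the isomorphism $\cS_\vp \s I^{\GSpin_\flat}(\ts)$ via \cite[Lemma 5.3.4]{chaoli}, the inclusions into the $\bar\vp$-data, and the restriction $\Lambda_\flat$ of the Hiraga--Saito map $\Lambda_{\SL_4}$ with image $\mcA^{\GSpin_\flat}(\ts)$, yielding the bijection and the decomposition of $V_{\ts}$. Your closing remark about the unavailability of a case-by-case description of $\cS_{\vp,\scn}$ matches the paper's Remark \ref{rem for GSpin6} and does not affect the theorem.
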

\begin{proof}
The idea of the proof is as in the proof of Theorem \ref{1-1 for GSpin4}.
Given $\vp \in \Phi(\GSpin_{\flat}),$ we choose a lifting $\tvp \in \Phi(\GL_{\flat})$ 
and obtain the projection $\bar{\vp} \in \Phi(\SL_{\flat})$ in the following commutative diagram
\begin{equation} 
\label{projection parameters 6}
\xymatrix{
& {\quad \quad \quad \quad \quad \quad} \widehat{{\GL}_{\flat}} = {\GL}_1(\CC) \times  {\GL}_4(\CC) \ar@{->>}[d]^{{~~}pr_6}\\
W_F \times {\SL}_2(\CC) \ar@{->}[ur]^{\tvp} \ar@{->}[r]^{\vp}
\ar@{->}[dr]_{\bar{\vp}} 
& {\quad} \widehat{{\GSpin}_{\flat}} = {\GSO}_6(\CC)  \ar@{->>}^{{~~}\bar{pr}}[d] 
\\
& {\quad \quad \quad \quad \quad \quad \quad} \widehat{{\SL}_{\flat}} = {\PGL}_4(\CC) ~ .
}
\end{equation}
We then have $\ts \in \Pi_{\tvp}(\GL_{\flat})$ which is an extension of $\sigma \in \Pi_{\vp}(\GSpin_{\flat}).$
In addition to \eqref{ex seq L-gp for GSpin6}, we also have 
\[
1 \longrightarrow \CC^{\times} \times \CC^{\times} \longrightarrow {\GL}_1(\CC) \times 
{\GL}_4(\CC) \overset{\bar{pr} \circ pr_6}{\longrightarrow} {\PGL}_4(\CC) \longrightarrow 1 
\]
Considering the kernels of the projections $pr_6$ and $\bar{pr} \circ pr_6,$
we set
\[
X^{{\GSpin}_{\flat}}(\tvp) := \{ a \in H^1(W_F, \CC^{\times}) : a \tvp \s \tvp \}
\]
\[
X^{{\SL}_{\flat}}(\tvp) := \{ a \in H^1(W_F, \CC^{\times} \times \CC^{\times}) : a \tvp \s \tvp \}.
\]
Moreover, by \eqref{convenient exact sequence GSpin6} and its analogs for the two non quasi-split $F$-inner forms, we have 
\[
{\GL}_{\flat}(F) / {\GSpin}_{\flat}(F) \s F^{\times}.
\]
As an easy consequence of Galois cohomology, we also have
\[
{\GL}_{\flat}(F) / {\SL}_{\flat}(F) \s F^{\times} \times F^{\times}.
\]
Set
\[
I^{{\GSpin}_{\flat}}(\ts) := \{ \chi \in (F^{\times})^D \s ({\GL}_{\flat}(F) / {\GSpin}_{\flat}(F))^D : \ts \chi  \s \ts \}
\]
\[
I^{{\SL}_{\flat}}(\ts) := \{ \chi \in (F^{\times})^D \times (F^{\times})^D \s ({\GL}_{\flat}(F) / {\SL}_{\flat}(F))^D : \ts \chi  \s \ts \}.
\]
As in Remark \ref{rem on characters},  we often make no distinction between 
$\chi$ and $\chi \circ \det$ (respectively, $\chi \circ \Nrd$).
It follows from the definitions that 
\begin{equation} \label{subset X I 6}
X^{{\GSpin}_{\flat}}(\tvp) \subset X^{{\SL}_{\flat}}(\tvp) \quad \mbox{ and } \quad 
I^{{\GSpin}_{\flat}}(\ts) \subset I^{{\SL}_{\flat}}(\ts). 
\end{equation}
Recalling $(F^{\times})^D \s H^1(W_F, \CC^{\times})$ by the local class field theory, 
it is immediate from the LLC for $GL_n$ that
\[
X^{{\GSpin}_{\flat}}(\tvp) \s I^{{\GSpin}_{\flat}}(\ts) \quad \mbox{ and } \quad X^{{\SL}_{\flat}}(\tvp) \s I^{{\SL}_{\flat}}(\ts)
\]
as groups of characters. 
Now, with the component group notation $\cS$ of Section \ref{general LLC}, we claim that
\begin{equation} \label{crutial iso for gspin6}
I^{{\GSpin}_{\flat}}(\ts) \s \cS_{\vp}.
\end{equation}
Again, by \cite[Lemma 5.3.4]{chaoli} and above arguments, this claim follows from 
\[
\cS_{\vp} \s X^{{\GSpin}_{\flat}}(\tvp),
\]
since $\cS_{\tvp}$ is always trivial by the LLC for ${\GL}_{\flat}.$ 
Again note that $\tvp$ and $\vp$ here are respectively $\phi$ and $\phi^{\sharp}$ in \cite[Lemma 5.3.4]{chaoli}.
Thus, due to \eqref{a diagram}, \eqref{subset X I 6}, and\eqref{crutial iso for gspin6} we have
\begin{equation} \label{subset cS 6}
\cS_{\vp} \subset \cS_{\bar\vp}.
\end{equation}

Since the centralizer $C_{\bar\vp}(\widehat{{\SL}_{\flat}})$ is equal to the image of the disjoint union
\[
\coprod_{\nu \in {\Hom}(W_F, \CC^{\times})} 
\left\{ h \in {\GSO}_6(\CC) : h \vp(w)h^{-1} \vp(w)^{-1} = \nu(w) \right\}
\]
from the exact sequence
\[
1 \longrightarrow \CC^{\times} \longrightarrow {\GSO}_6(\CC) \overset{\bar{pr} }{\longrightarrow} 
\widehat{{\SL}_{\flat}} = {\PGL}_4 \longrightarrow 1,
\]
we have 
\[
S_{\vp} \subset C_{\bar\vp} = S_{\bar\vp}.
\]
So, the pre-images in $\SL_4(\CC) $ via the isogeny 
$\SL_4(\CC) \twoheadrightarrow \PGL_4(\CC)$ also satisfy
\[
S_{\vp, \scn} 
\subset
S_{\bar\vp, \scn} 
\subset {\SL}_4(\CC).
\]
This provides the inclusion of the identity components
\begin{equation} \label{subset S sc 6}
S_{\vp, \scn}^{\circ} \subset 
S_{\bar\vp, \scn}^{\circ},
\end{equation} 
and by the definition of $\widehat Z_{\vp, \scn}(G)$ of Section \ref{general LLC}, 
we again have the following surjection
\begin{equation} \label{subset Z 6}
\widehat Z_{\vp, \scn} \twoheadrightarrow \widehat Z_{\vp, \scn}({\SL}_{\flat}). 
\end{equation} 
Combining \eqref{subset cS 6}, \eqref{subset S sc 6}, and \eqref{subset Z 6}, 
we again have the commutative diagram of component groups: 
\begin{equation} \label{pre pre diagram gspin6}
\begin{CD}
1 @>>>  \widehat Z_{\vp, \scn}  @>>> \cS_{\vp, \scn} @>>> \cS_{\varphi}  @>>> 1 \\
@. @VV{\twoheadrightarrow}V @VV{\cap}V @VV{\cap}V @.\\
1 @>>>  \widehat Z_{\bar\vp, \scn}  @>>> \cS_{\bar\vp, \scn} @>>> \cS_{\bar\vp}  @>>> 1.
\end{CD} 
\end{equation}
Now apply Hiraga-Saito's homomorphism $\Lambda_{\SL_n}$ of \eqref{a diagram} 
in the case of our $\SL_{\flat},$ which we denote by $\Lambda_{\SL_4}.$ 
The restriction
\[
\Lambda_{\flat} := \Lambda_{\SL_4}\vert_{\cS_{\vp, \scn}} 
\] 
then gives the following commutative diagram: 
\begin{equation} \label{pre diagram gspin6}
\begin{CD}
1 @>>> \widehat Z_{\vp, \scn}  @>>> \cS_{\vp, \scn} @>>> \cS_{\vp}  @>>> 1 \\
@. @VV{\zeta_{\flat}}V @VV{\Lambda_{\flat}}V @VV{\cap}V @.\\
1 @>>> \CC^\times @>>> \mcA(\ts) @>>> I^{\SL_{\flat}}(\ts) @>>> 1.
\end{CD} 
\end{equation}
Note that $\zeta_{\flat}$ is identified with $\zeta_G$ in \eqref{a diagram} as the character on $\mu_4(\CC),$ 
since both are determined according to $\SL_\flat.$

Again, similar to $\mcA(\ts)$ (cf. Section \ref{results in rest}), we write $\mcA^{\GSpin_{\flat}}(\ts)$ 
for the subgroup of ${\Aut}_{\CC}(V_{\ts})$ generated by $\CC^\times$ and  
$\left\{I_{\chi} : \chi \in I^{\GSpin_{\flat}}(\ts) \right\}.$  Hence, $\mcA^{\GSpin_{\flat}}(\ts) \subset \mcA(\ts).$
By the definition of $\Lambda_{\SL_4}$ in \eqref{a diagram} and the commutative diagram \eqref{pre pre diagram gspin6}, 
it is immediate that the image of $\Lambda_{\flat}$ is 
$\mcA^{\GSpin_{\flat}}(\ts).$
We thus have the following commutative diagram: 
\begin{equation} \label{diagram gspin6}
\begin{CD}
1 @>>> \widehat Z_{\vp, \scn} @>>> \cS_{\vp, \scn} @>>> \cS_{\vp} @>>> 1 \\
@. @VV{\zeta_{\flat}}V @VV{\Lambda_{\flat}}V @VV{\s}V @.\\
1 @>>> \CC^\times @>>> \mcA^{\GSpin_{\flat}}(\ts) @>>> I^{\GSpin_{\flat}}(\ts) @>>> 1.
\end{CD} 
\end{equation}
The representation $\rho_{\sigma} \in \Irr(\cS_{\vp, \scn}, \zeta_{\flat})$ is defined by $\xi_{\si} \circ \Lambda_{\flat},$ 
where $\xi_{\si} \in \Irr(\mcA^{\GSpin_\flat}(\ts), \id)$ is the character as in the decomposition \eqref{useful decomp}.
Now, arguments of Section \ref{results in rest}, 
our construction of $L$-packets $\Pi_{\pi}(\GSpin_\flat)$ in Section \ref{const of L-packet for GSpin6} 
and diagram \eqref{diagram gspin6} above give 
\begin{equation} \label{imp bijection gspin6}
\Irr(\cS_{\vp, \scn}, \zeta_{\flat}) \, \overset{1-1}{\longleftrightarrow}
\Irr(\mcA^{\GSpin_{\flat}}(\ts), id) \, \overset{1-1}{\longleftrightarrow}
\Pi_{\vp}({\GSpin}_{\flat}).
\end{equation} 
We also have the following decomposition
\[
V_{\ts} ~ ~ \s \bigoplus_{\si \in \Pi_{\vp}({\GSpin_\flat})} \rho_{\si} \boxtimes  \si 
=
\bigoplus_{\rho \in \Irr(\cS_{\vp, \scn}, \zeta_{\flat})} \rho \boxtimes  \si_{\rho},
\]
where $\si_{\rho}$ denotes the image of $\rho$ via the bijection between $\Pi_{\vp}(\GSpin_\flat)$ 
and $\Irr(\cS_{\vp, \scn}, \zeta_{\flat}).$ Hence, the proof of Theorem \ref{1-1 for GSpin6} is complete.
\end{proof}
\begin{rem} 
As before, since $\Lambda_{\flat}$ is unique up to 
$\operatorname{Hom}(I^{\GSpin_\flat}(\ts), \CC^{\times}) \s \operatorname{Hom}(\cS_{\vp}, \CC^{\times}),$ 
the same is true for the bijection \eqref{imp bijection gspin6}.
\end{rem}
\begin{rem}
Given $\vp \in \phi(\GSpin_{6}),$ by Theorem \ref{1-1 for GSpin6}, we have a one-to-one bijection between
\[
\Pi_{\vp}({\GSpin}_{6}) \cup \Pi_{\vp}({\GSpin}_6^{2,0}) \cup 
\Pi_{\vp}({\GSpin}_6^{1,0}) \cup \Pi_{\vp}({{\GSpin}_6^{0,1}}) 
\overset{1-1}{\longleftrightarrow} {\Irr}(\cS_{\vp, \scn}),
\] 
where ${\GSpin}_6^{0,1}$ is a group isomorphic to ${\GSpin}_6^{1,0}$ as can be seen by taking the canonical 
isomorphism $D_4 \s D^{op}_4,$ as discussed in \eqref{derived of 6 inner 2}.
\end{rem}

\subsection{$L$-packet Sizes for $\GSpin_6$ and Its Inner Forms} \label{size of L-pacekt gspin6}
Just as in the case of $\GSpin_4,$ we have the following possible cardinalities for the $L$-packets 
of $\GSpin_6$ and its inner forms. 
\begin{pro} \label{pro size 6}
Let $\Pi_{\vp}(\GSpin_\flat)$ be an $L$-packet associated to $\vp \in \Phi(\GSpin_\flat).$  
Then we have
\[
\left| \Pi_{\vp}({\GSpin}_\flat) \right| 
\; \Big{|} \; 
\left| F^{\times}/(F^{\times})^2 \right|,
\]
which  yields the possible cardinalities  
\[
\left| \Pi_{\vp}({\GSpin}_\flat) \right| 
= \left\{ 
\begin{array}{l l}
    1, ~ 2, ~ 4, & \: \text{if} ~ p \neq 2, \\

    1, ~ 2, ~ 4, ~ 8, & \: \text{if} ~ p= 2. \\
  \end{array}
  \right.
\]
\end{pro}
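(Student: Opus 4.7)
The plan is to transcribe verbatim the strategy of Proposition \ref{pro size 4} to the $\GSpin_6$ setting, since every structural input transfers without modification. Specifically, Lemma \ref{GC for GSpin} provides $H^1(F, \GSpin_\flat) = 1$; the center $Z(\GL_\flat)$ is a product of factors each isomorphic over $F$ to $\GL_1$ (since $Z(\GL_n) = Z(\GL_m(D)) = \GL_1$), so Hilbert 90 yields $H^1(F, Z(\GL_\flat)) = 1$; by \cite[Proposition 2.3]{ash06duke} we have $Z(\GSpin_\flat)^{\circ} \s \GL_1$, hence $H^1(F, Z(\GSpin_\flat)^{\circ}) = 1$; and by \eqref{centers} we have $\pi_0(Z(\GSpin_\flat)) \s \ZZ/2\ZZ$. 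These are precisely the ingredients that drove the $\GSpin_4$ proof.

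First, I would apply Galois cohomology to the short exact sequence of algebraic $F$-groups
\[
1 \longrightarrow Z(\GSpin_\flat) \longrightarrow Z(\GL_\flat) \times \GSpin_\flat \longrightarrow \GL_\flat \longrightarrow 1,
\]
where the right arrow is $(z,g) \mapsto z g$ (using the inclusion $\GSpin_\flat \subset \GL_\flat$). The vanishing of $H^1(F, Z(\GL_\flat) \times \GSpin_\flat)$ recorded above then produces the embedding
\[
\GL_\flat(F) \big/ \big( Z(\GL_\flat)(F) \cdot \GSpin_\flat(F) \big) \hookrightarrow H^1(F, Z(\GSpin_\flat)).
\]
Next, applying Galois cohomology to $1 \to Z(\GSpin_\flat)^{\circ} \to Z(\GSpin_\flat) \to \pi_0(Z(\GSpin_\flat)) \to 1$ together with $H^1(F, Z(\GSpin_\flat)^{\circ}) = 1$ gives
\[
H^1(F, Z(\GSpin_\flat)) \hookrightarrow H^1(F, \pi_0(Z(\GSpin_\flat))) \s H^1(F, \ZZ/2\ZZ) \s F^{\times}/(F^{\times})^2.
\]

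To conclude, I would recall from Section \ref{results in rest} that $\Pi_\vp(\GSpin_\flat)$ is in bijection with $\GL_\flat(F)/\GL_\flat(F)_{\ts}$ for any lifting $\ts \in \Irr(\GL_\flat)$ of a member of the packet. Since the center of $\GL_\flat$ acts on $\ts$ by a scalar character, the stabilizer $\GL_\flat(F)_{\ts}$ contains $Z(\GL_\flat)(F) \cdot \GSpin_\flat(F)$, so $|\Pi_\vp(\GSpin_\flat)|$ divides the index $|\GL_\flat(F) / (Z(\GL_\flat)(F) \cdot \GSpin_\flat(F))|$, and hence divides $|F^{\times}/(F^{\times})^2|$ by the chain of injections above. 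The stated list of possible cardinalities then follows from the standard computation of $|F^{\times}/(F^{\times})^2|$ over $p$-adic fields. Since the proof is a direct transcription of the $\GSpin_4$ argument, there is essentially no genuine obstacle; the only point requiring a moment's care is the cohomological vanishing in the non quasi-split inner form cases, where $\GL_\flat$ involves factors $\GL_m(D)$, but these are handled by the same Hilbert 90 style arguments and by \cite[Lemma 2.8]{pr94}, exactly as in the split case.
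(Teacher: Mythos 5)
Your proposal is correct and follows essentially the same route as the paper: the paper's own proof is likewise a direct transcription of the $\GSpin_4$ argument, applying Galois cohomology to $1 \to Z({\GSpin}_\flat) \to Z({\GL}_\flat) \times {\GSpin}_\flat \to {\GL}_\flat \to 1$ and to $1 \to Z({\GSpin}_\flat)^{\circ} \to Z({\GSpin}_\flat) \to \pi_0(Z({\GSpin}_\flat)) \to 1,$ and then bounding $\left|\Pi_{\vp}({\GSpin}_\flat)\right|$ by the index of the stabilizer as in Section \ref{results in rest}. Your added remark that the stabilizer contains $Z({\GL}_\flat)(F) \cdot {\GSpin}_\flat(F)$ is a harmless elaboration of a step the paper leaves implicit.
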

\begin{proof}
We proceed similarly as in the proof of Proposition \ref{pro size 4}, again making use of Galois Cohomology.  
The exact sequence of algebraic groups
\[
1 \longrightarrow Z({\GSpin}_\flat) \longrightarrow Z({\GL}_\flat) \times {\GSpin}_\flat \longrightarrow {\GL}_\flat 
\longrightarrow 1
\]
gives a long exact sequence
\[
\cdots \longrightarrow (F^{\times} \times F^{\times}) \times {\GSpin}_\flat(F) \longrightarrow {\GL}_\flat(F) 
\longrightarrow H^1(F, Z({\GSpin}_\flat)) \longrightarrow H^1(F, Z({\GL}_\flat) \times {\GSpin}_\flat) \longrightarrow 1.
\]
Since $H^1(F, Z({\GL}_\flat) \times {\GSpin}_\flat) = 1$ by Lemma \ref{GC for GSpin} and \cite[Lemmas 2.8]{pr94},  
we have 
\[
{\GL}_\flat(F) / \left( (F^{\times} \times F^{\times}) \times {\GSpin}_\flat(F) \right) \hookrightarrow  H^1(F, Z({\GSpin}_\flat)).
\]
Also, the exact sequence
\[
1 \longrightarrow Z({\GSpin}_\flat)^{\circ} \longrightarrow Z({\GSpin}_\flat) \longrightarrow \pi_0(Z({\GSpin}_\flat)) \longrightarrow 1, 
\]
we have 
\[
H^1(F, Z({\GSpin}_\flat)) \hookrightarrow H^1(F, \pi_0( Z({\GSpin}_\flat))), 
\]
since, by \cite[Proposition 2.3]{ash06duke}, $Z({\GSpin}_\flat)^{\circ} \s \GL_1$  and $H^1(F, Z({\GSpin}_\flat)^{\circ})=1.$
Combining the above with \eqref{centers}, we have
\begin{equation} \label{coker imbeding 6} 
{\GL}_\flat(F) / \left( (F^{\times} \times F^{\times}) \times {\GSpin}_\flat(F) \right) 
\hookrightarrow  H^1(F, \pi_0( Z({\GSpin}_\flat))) \s H^1(F, \ZZ/2\ZZ) \s F^{\times}/(F^{\times})^2.
\end{equation}
On the other hand, we know from Section \ref{results in rest} that 
\[
\Pi_{\vp}({\GSpin}_\flat) \overset{1-1}{\longleftrightarrow} 
{\GL}_\flat(F) / {\GL}_\flat(F)_{\ts}
\hookrightarrow 
{\GL}_\flat(F) / \left( (F^{\times} \times F^{\times}) \times {\GSpin}_\flat(F) \right),
\]
where $\ts \in \Irr(\GL_\flat)$ corresponds to a lifting $\tvp \in \Phi(\GL_\flat),$ via the LLC for $GL_n$ and its inner forms, 
of $\vp \in \Phi(\GSpin_\flat).$ 
This completes the proof. 
\end{proof}

Next, we give a description of the group $I^{\GSpin_6}(\ts)$ for $\ts \in \Irr(\GL_1 \times \GL_4).$  
Unlike the case of $\GSpin_4$ we are unable to give a case by case classification (cf. Remark \ref{rem for GSpin6}).

Given $\ts \in \Irr(\GL_1 \times \GL_4),$ we set $\ts = \teta \boxtimes \ts_0$ with $\teta \in (\GL_1(F))^D, \ts_0 \in \Irr(\GL_4).$
By Remark \ref{rem on characters}, we note that $\chi \in ({\GL}_{\flat}(F))^D$ is decomposed into 
$\tchi_1 \boxtimes \tchi_2,$ where $\tchi_1 \in (\GL_1(F))^D$ and $\tchi_2 \in ({\GL}_{\flat}(F))^D.$ 
Moreover, we identify $\tchi_2 \in ({\GL}_{4}(F))^D$ and $\tchi_2 \circ \det,$ since any character on 
$\GL_n(F)$ is of the form $\tchi \circ \det$ for some character $\tchi$ on $F^{\times}.$

\begin{lm} \label{lemma form of characters for GSpin6}
Any $\chi \in I^{\GSpin_6}(\ts)$ is of the form 
\[
\tchi^{-2} \boxtimes \tchi,
\]
for some $\tchi \in (F^{\times})^D.$  
\end{lm}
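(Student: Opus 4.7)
The strategy is the exact analogue of the proof of Lemma \ref{lemma form of characters for GSpin4}: decompose $\chi$ into its two components, use the fact that $\chi$ must be trivial on $\GSpin_6(F)$, and translate the defining relation of $\GSpin_6(F)$ inside $\GL_1(F) \times \GL_4(F)$ into a relation between the two components.

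More concretely, the plan is as follows. First, using Remark \ref{rem on characters}, I will write $\chi = \tchi_1 \boxtimes \tchi_2$, where $\tchi_1$ is a character of $\GL_1(F) = F^{\times}$ and $\tchi_2$ is a character of $\GL_4(F)$; the latter must factor through the determinant, and by abuse of notation I will identify $\tchi_2$ with the corresponding character on $F^{\times}$. Next I will invoke the description \eqref{convenient description of GSpin6(F)} of $\GSpin_6(F)$ as the subgroup of pairs $(g_1, g_2) \in \GL_1(F) \times \GL_4(F)$ satisfying the relation tying $g_1^{\pm 2}$ to $\det g_2$.

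The condition $\chi \in I^{\GSpin_6}(\ts) \subseteq ({\GL}_\flat(F)/{\GSpin}_\flat(F))^D$ means $\chi$ is trivial on $\GSpin_6(F)$. Substituting $(g_1, g_2)$ with $\det g_2 = g_1^2$ (using the realization in Proposition \ref{sio for splits}) gives $\tchi_1(g_1)\tchi_2(g_1^2) = 1$ for all $g_1 \in F^{\times}$, since the determinant on $\GL_4(F)$ is surjective onto $F^{\times}$. This forces $\tchi_1 = \tchi_2^{-2}$, and setting $\tchi = \tchi_2$ yields $\chi = \tchi^{-2} \boxtimes \tchi$ as claimed.

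There is no real obstacle here; the argument is a direct computation parallel to Lemma \ref{lemma form of characters for GSpin4}, with the only genuinely new feature being that the exponent $2$ arises from the squaring map $g_1 \mapsto g_1^2$ in the defining equation of $\GSpin_6$ (as opposed to the condition $\det g_1 = \det g_2$ in $\GSpin_4$, which is symmetric in the two factors and accounts for the form $\tchi \boxtimes \tchi^{-1}$ in the $\GSpin_4$ case).
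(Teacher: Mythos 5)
Your proposal is correct and takes essentially the same route as the paper's own proof: write $\chi = \tchi_1 \boxtimes \tchi_2$, evaluate on pairs $(g_1,g_2)$ with $g_1^2 = \det g_2$, and use surjectivity of $\det$ on $\GL_4(F)$ to force $\tchi_1 = \tchi_2^{-2}$. Your aside about the exponent $g_1^{\pm 2}$ is well taken — the displayed $F$-points description carries a sign (and a $\GL_2$ versus $\GL_4$) typo relative to Proposition \ref{sio for splits} — and the paper's proof, like yours, works with the relation $g_1^2 = \det g_2$.
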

\begin{proof}
Since $\chi = \tchi_1 \boxtimes \tchi_2$ as above and $\chi$ is trivial on $\GSpin_6(F),$ by the structure of 
$\GSpin_6(F)$ in \eqref{convenient description of GSpin6(F)}, we have 
\[
\chi((g_1, g_2))=\tchi_1(g_1) \boxtimes \tchi_2 (\det g_2)=\tchi_1(g_1) \tchi_2 (\det g_2) =1
\]
for all $(g_1,g_2) \in \GL_1(F) \times \GL_4(F)$ with $(g_1)^2 = \det g_2.$ 
Since the determinant map $\det: \GL_4(F) \rightarrow F^{\times}$ is surjective, 
$\tchi_1 (\tchi_2)^{2}(x)$ must be trivial for all $x \in F^{\times}.$ Thus, we have $\tchi_1=(\tchi_2)^{-2}.$ 
\end{proof}
\begin{pro} \label{form of characters for I(GSpin6)}
We have
\[ 
I^{\GSpin_6}(\ts) = 
\{
\chi \in I^{\SL_4}(\ts_0) : \chi^2 = \mathbbm{1} 
\}.
\] 
\end{pro}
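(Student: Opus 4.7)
The plan is to unpack both sides of the claimed equality using Lemma \ref{lemma form of characters for GSpin6} together with the tensor-product structure $\ts = \teta \boxtimes \ts_0$. The key observation is that every element of $I^{\GSpin_6}(\ts)$ is already constrained, by the previous lemma, to have the form $\tchi^{-2}\boxtimes\tchi$ for a unique $\tchi \in (F^{\times})^D$, so there is a canonical bijection $\chi\leftrightarrow\tchi$ between $I^{\GSpin_6}(\ts)$ and a subset of $(F^{\times})^D$. Under this identification, the proposition becomes the assertion that this subset equals $\{\tchi \in I^{\SL_4}(\ts_0) : \tchi^2 = \mathbbm{1}\}$.

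First, I would fix $\chi \in I^{\GSpin_6}(\ts)$ and write $\chi = \tchi^{-2}\boxtimes\tchi$ by Lemma \ref{lemma form of characters for GSpin6}, observing that the map $\chi\mapsto\tchi$ is injective (one recovers $\tchi$ from the second factor). Next, I would compute
\[
\ts \otimes \chi \;=\; (\teta \boxtimes \ts_0) \otimes (\tchi^{-2} \boxtimes \tchi) \;=\; (\teta\,\tchi^{-2}) \boxtimes (\ts_0 \otimes \tchi).
\]
Since $\teta$ is a character of $\GL_1$ and $\ts_0$ is irreducible, the equivalence $\ts\otimes\chi \s \ts$ of external tensor products decouples into the two independent conditions
\[
\teta\,\tchi^{-2} = \teta \qquad \text{and} \qquad \ts_0 \otimes \tchi \s \ts_0.
\]
The first is equivalent to $\tchi^2 = \mathbbm{1}$, and the second is exactly the defining condition for $\tchi$ to belong to $I^{\SL_4}(\ts_0)$ (recalling from Remark \ref{rem on characters} that we identify a character of $\GL_4(F)$ of the form $\tchi\circ\det$ with $\tchi \in (F^{\times})^D$).

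For the reverse inclusion, I would start with any $\tchi \in I^{\SL_4}(\ts_0)$ satisfying $\tchi^2 = \mathbbm{1}$, set $\chi := \tchi^{-2}\boxtimes\tchi = \mathbbm{1}\boxtimes\tchi$, note that $\chi$ is indeed trivial on $\GSpin_6(F)$ by the description of $\GSpin_6(F)$ as $\{(g_1,g_2) : g_1^2=\det g_2\}$ in \eqref{convenient description of GSpin6(F)} (which is exactly where the exponent $-2$ comes from), and then verify $\ts\otimes\chi \s \ts$ by reversing the computation above.

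There is essentially no obstacle here: the proof is a direct computation once Lemma \ref{lemma form of characters for GSpin6} is in hand, and the only subtlety to keep track of is the bookkeeping between the character $\chi$ on $\GL_1(F)\times\GL_4(F)$ and the character $\tchi$ on $F^{\times}$, together with the implicit identification of $I^{\SL_4}(\ts_0)$ with a subgroup of $(F^{\times})^D$ via $\tchi \mapsto \tchi\circ\det$. Unlike the $\GSpin_4$ case (Proposition \ref{form of characters for I(GSpin4)}), no case split is needed here because the two factors of $\ts$ are of unequal rank, so the only way $\ts\otimes\chi$ can be isomorphic to $\ts$ is componentwise.
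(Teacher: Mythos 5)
Your proposal is correct and follows essentially the same route as the paper: invoke Lemma \ref{lemma form of characters for GSpin6} to write $\chi = \tchi^{-2}\boxtimes\tchi$, then decouple $\ts\chi \s \ts$ componentwise into $\teta\tchi^{-2}=\teta$ (i.e.\ $\tchi^2=\mathbbm{1}$) and $\ts_0\tchi \s \ts_0$ (i.e.\ $\tchi \in I^{\SL_4}(\ts_0)$). The paper records this as a single chain of equivalences, so your explicit treatment of the reverse inclusion is just a more verbose rendering of the same computation.
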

\begin{proof}
Since $\ts = \teta \boxtimes \ts_0$ with $\teta \in (\GL_1(F))^D$ and $\ts_0 \in \Irr(\GL_4),$ 
by Lemma \ref{lemma form of characters for GSpin6}, we have
\begin{equation*} \label{iff for characters GSpin6}
\ts \chi \s \ts 
\;  \Longleftrightarrow \;
\teta \tchi^{-2} \boxtimes \ts_0 \tchi \s \teta \boxtimes \ts_0
\;  \Longleftrightarrow \;
\ts_0 \tchi \s \ts_0  \text{ and }  \tchi^{2}=\mathbbm{1}.
\end{equation*}
This completes the proof.
\end{proof}
\begin{rem} \label{rem for GSpin6}
Propositions \ref{pro size 6} and \ref{form of characters for I(GSpin6)} imply that $I^{\GSpin_6}(\ts)$ is of the form 
$(\ZZ/2\ZZ)^r$ with $r=0, 1, 2$ if $p \neq 2$ and with $r=0, 1, 2, 3$ if $p = 2.$
Unlike the case of $\SL_2$ (see \cite[Porposition 6.3]{gtsp10}),  a full classification of irreducible $L$-parameters in 
$\Phi(\SL_4)$ is not currently available.  Thus, unlike the case of $\GSpin_4$ in Section \ref{size of L-pacekt gspin4}  
(cf. Remarks \ref{cor for I(GSpin4)} and \ref{rem 2 for I(GSpin4)}), 
we do not classify the group $I^{\GSpin_6}(\ts)$ case by case, 
nor discuss the group structure of $\cS_{\vp, \scn},$ and accordingly, the description of all the sizes of $L$-packets for $\GSpin_{6},$ $\GSpin^{2,0}_6,$ and $\GSpin^{1,0}_6$ is not available here. 
\end{rem}

\subsection{Properties of $\L$-maps for $\GSpin_{6}$ and its inner forms} \label{properties for gspin6}

The $\L$-maps defined in Section \ref{const of L-packet for GSpin6} again satisfy 
some natural properties similar to the case of $\GSpin_4.$  We now verify those properties. 
In what follows, let $\L_{\flat} \in \left\{\L_6, \L_6^{2,0}, \L_4^{1,0} \right\}$ as the case may be. 

\begin{pro} \label{discreteness for GSpin6}
A representation $\sigma_{\flat} \in \Irr({\GSpin}_\flat)$ is essentially square integrable 
if and only if its $L$-parameter $\vp_{\sigma_{\flat}} := \L_{\flat}(\sigma_{\flat})$ does not factor 
through any proper Levi subgroup of ${\GSO}_6(\CC).$
\end{pro}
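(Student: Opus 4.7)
The plan is to mirror the argument used in the proof of Proposition \ref{discreteness for gspin4}, transposing each step from the $\GSpin_4$ setting to the $\GSpin_6$ setting. First, I would invoke the construction of $\L_\flat$ from Section \ref{const of L-packet for GSpin6}: by the very definition, any $\sigma_\flat \in \Irr(\GSpin_\flat)$ is an irreducible constituent of the restriction $\Res^{\GL_\flat}_{\GSpin_\flat}(\ts_\flat)$ for some $\ts_\flat \in \Irr(\GL_\flat)$, and $\vp_{\sigma_\flat} = pr_6 \circ \tvp_{\ts_\flat}$ where $\tvp_{\ts_\flat}$ is the $L$-parameter of $\ts_\flat$ attached via the LLC for $\GL_4$ (respectively $\GL_2(D)$ or $\GL_1(D_4)$, for the inner forms).

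Next I would use Remark \ref{rem in rest from to} (the Tadi\'c result) to reduce the question for $\sigma_\flat$ to the corresponding question for its lift: $\sigma_\flat$ is essentially square integrable if and only if $\ts_\flat$ is essentially square integrable. Then I would appeal to the LLC for $\GL_\flat$, namely \cite{ht01, he00, scholze13} in the split case and \cite{hs11} for the non quasi-split inner forms, under which essentially square integrable representations of $\GL_\flat(F)$ correspond precisely to $L$-parameters $\tvp \in \Phi(\GL_\flat)$ that do not factor through any proper Levi subgroup of $\widehat{\GL_\flat} = \GL_1(\CC) \times \GL_4(\CC)$.

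The remaining step is to transfer this non-factorization property across $pr_6$. Here one uses the exact sequence \eqref{ex seq L-gp for GSpin6}
\[
1 \longrightarrow \CC^{\times} \longrightarrow \GL_1(\CC) \times \GL_4(\CC) \overset{pr_6}{\longrightarrow} \widehat{\GSpin_6} \longrightarrow 1,
\]
whose kernel is the central torus $\{(z^{-2}, zI_4): z \in \CC^\times\}$. Because this kernel is central, $pr_6$ induces a bijection between standard Levi subgroups of $\GL_1(\CC) \times \GL_4(\CC)$ and those of $\widehat{\GSpin_6} = \GSO_6(\CC)$, compatibly with the projection of $L$-parameters. Consequently $\tvp_{\ts_\flat}$ factors through a proper Levi of $\widehat{\GL_\flat}$ if and only if $\vp_{\sigma_\flat} = pr_6 \circ \tvp_{\ts_\flat}$ factors through a proper Levi of $\GSO_6(\CC)$, which completes the equivalence.

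The main (minor) subtlety I expect is in justifying cleanly that $pr_6$ respects Levi subgroups for all three relevant forms of $\GL_\flat$, particularly for the inner forms $\GL_2(D)$ and $\GL_1(D_4)$, where "Levi" must be interpreted in terms of $F$-Levi subgroups of the inner form rather than naively on the dual side. Since $L$-groups of inner forms coincide (cf. Section \ref{L-groups}) and Levi subgroups are determined on the dual side by the same projection $pr_6$, this reduces to the split verification; everything else is a direct transcription of the $\GSpin_4$ argument.
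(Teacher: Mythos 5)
Your proposal is correct and follows essentially the same route as the paper: the paper's proof of this proposition simply says it is similar to that of Proposition \ref{discreteness for gspin4}, and your argument is precisely that $\GSpin_4$ proof transposed to $\GSpin_\flat$ (restriction via Remark \ref{rem in rest from to}, the LLC for $\GL_\flat$ and its inner forms, and the fact that $pr_6$ in \eqref{ex seq L-gp for GSpin6} respects Levi subgroups). Your extra remark that the central kernel of $pr_6$ gives the Levi correspondence is a fine justification of the step the paper states without elaboration.
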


\begin{proof}
The proof is similar to that of Proposition \ref{discreteness for gspin4} so we omit the details. 
\end{proof}
\begin{rem}
We similarly have that a given $\sigma_{\flat} \in \Irr({\GSpin}_\flat)$ is tempered if and only 
if the image of its $L$-parameter $\vp_{\sigma_{\flat}} := \L_{\flat}(\sigma_{\flat})$ in ${\GSO}_6(\CC)$ is bounded. 
\end{rem}

Again because the restriction of representations preserves the intertwining operator 
and the Plancherel measure (see \cite[Section 2.2]{choiy1}), our construction of the 
$L$-packets in Section \ref{const of L-packet for GSpin6}, gives the following result. 

\begin{pro}
Let $\vp \in \Phi_{\disc}(\GSpin_\flat)$ be given.  For any $\sigma_1, \sigma_2 \in \Pi_{\vp}(\GSpin_\flat),$ 
we have the equality of the Plancherel measures 
\[
\mu_M(\nu, \tau \boxtimes \si_1, w) = \mu_M(\nu, \tau \boxtimes \si_2, w),
\]
where $M$ is an $F$-Levi subgroup of an $F$-inner form of $\GSpin_{2n}$ of the form of the product of 
$\GSpin_\flat$ and copies of $F$-inner forms of $\GL_{m_i},$ $\tau \boxtimes \si_1, \tau \boxtimes \si_2 \in \Pi_{\disc}(M),$ 
$\nu \in \mathfrak{a}^{*}_{M, \CC},$ and $w \in W_M$ with $^wM = M.$
Further, it is a consequence of the equality of the Plancherel measures that the Plancherel measure is also preserved 
between $F$-inner forms in the following sense.  Let $\GSpin'_\flat$ be an $F$-inner form of $\GSpin_\flat.$ 
Given $\vp \in \Phi_{\disc}(\GSpin_\flat),$ for any 
$\sigma \in \Pi_{\vp}(\GSpin_\flat)$ and $\sigma' \in \Pi_{\vp}(\GSpin'_\flat)$ we have 
\[
\mu_M(\nu, \tau \boxtimes \si, w) = \mu_{M'}(\nu, \tau' \boxtimes \si', w),
\]
where $M'$ is an $F$-inner form of $M,$ $\tau \boxtimes \si \in \Pi_{\disc}(M),$ $\tau' \boxtimes \si' \in \Pi_{\disc}(M'),$ and $\tau$ and $\tau'$ have the same $L$-parameter. 
\end{pro}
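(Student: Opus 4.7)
The plan is to mimic the proof of Proposition \ref{pm for gspin4} verbatim, with the pair $(\GL_\sharp, \GSpin_\sharp)$ replaced by $(\GL_\flat, \GSpin_\flat)$, since all the structural inputs — namely the chain \eqref{flat}, the existence of liftings from $\GSpin_\flat(F)$ to $\GL_\flat(F)$, and the construction of the $L$-packet $\Pi_\vp(\GSpin_\flat)$ as the set of irreducible constituents of the restriction of a single $\ts \in \Irr(\GL_\flat)$ — are already in place by Section \ref{const of L-packet for GSpin6} and Theorem \ref{1-1 for GSpin6}.

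First I would address the within-packet equality. Given $\vp \in \Phi_{\disc}(\GSpin_\flat)$ and $\sigma_1, \sigma_2 \in \Pi_\vp(\GSpin_\flat)$, by construction both $\sigma_1$ and $\sigma_2$ appear as irreducible constituents of $\Res^{\GL_\flat}_{\GSpin_\flat}(\ts)$ for the same $\ts \in \Pi_{\tvp}(\GL_\flat)$. Now form a parabolic $P = MN$ in the ambient $\GSpin_{2n}$ (or its inner form) whose Levi component $M$ contains $\GSpin_\flat$ as a factor, and let $\tilde M$ be the corresponding Levi obtained by replacing $\GSpin_\flat$ with $\GL_\flat$ so that $M_{\der} = \tilde M_{\der}$ in the sense of \eqref{cond on G-early}. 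Then $\tau \boxtimes \ts$ is a common lift of both $\tau \boxtimes \sigma_1$ and $\tau \boxtimes \sigma_2$ from $M(F)$ to $\tilde M(F)$. Applying \cite[Proposition 2.4]{choiy1}, which states that Plancherel measures are preserved under restriction of representations between such groups, yields
\[
\mu_M(\nu, \tau \boxtimes \si_1, w) = \mu_{\tilde M}(\nu, \tau \boxtimes \ts, w) = \mu_M(\nu, \tau \boxtimes \si_2, w).
\]

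For the inner-form statement, let $\GSpin'_\flat$ be an $F$-inner form of $\GSpin_\flat$, so the corresponding $\GL'_\flat$ is an $F$-inner form of $\GL_\flat$ (i.e., one of $\GL_1 \times \GL_4$, $\GL_1 \times \GL_2(D)$, or $\GL_1 \times \GL_1(D_4)$). Choose liftings $\ts \in \Pi_{\tvp}(\GL_\flat)$ and $\ts' \in \Pi_{\tvp}(\GL'_\flat)$ of $\sigma$ and $\sigma'$ respectively. The key point is that $\ts$ and $\ts'$ share the same $L$-parameter $\tvp$ on the $\GL$-factor side, hence by \cite[Lemma 2.1]{ac89} — which asserts the equality of Plancherel measures between inner forms of general linear groups once the $L$-parameters match under the local Jacquet–Langlands correspondence — we have $\mu_{\tilde M}(\nu, \tau \boxtimes \ts, w) = \mu_{\tilde M'}(\nu, \tau' \boxtimes \ts', w)$, where $\tilde M'$ is the corresponding inner twist of $\tilde M$. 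Combining this with the restriction-preservation argument above on both sides gives
\[
\mu_M(\nu, \tau \boxtimes \si, w) = \mu_{M'}(\nu, \tau' \boxtimes \si', w),
\]
completing the proof.

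No genuine obstacle is expected; the argument is structurally identical to the $\GSpin_4$ case, and the only point requiring care is to ensure that the ambient parabolic and Levi structure is set up so that the restriction-compatibility hypothesis of \cite[Proposition 2.4]{choiy1} applies — that is, that $M$ and $\tilde M$ share the same derived group, which is immediate from the factorizations in Section \ref{gp structure} and the fact that only the $\GSpin$-factor (respectively, $\GL$-factor) differs between $M$ and $\tilde M$.
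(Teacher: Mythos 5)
Your proposal is correct and follows essentially the same route as the paper: the paper's proof simply refers back to Proposition \ref{pm for gspin4}, which is exactly the argument you reproduce --- both $\sigma_1,\sigma_2$ lie in the restriction of a single $\ts$, so \cite[Proposition 2.4]{choiy1} gives the within-packet equality, and the inner-form case follows by combining \cite[Lemma 2.1]{ac89} on the $\GL$-side liftings with the same restriction-preservation result. Your additional care about setting up the ambient Levi pair $(M,\tM)$ with $M_{\der}=\tM_{\der}$ is a harmless (and accurate) elaboration of what the paper leaves implicit.
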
 
\begin{proof}
The proof is similar to that of Proposition \ref{pm for gspin4} so we omit the details. 
\end{proof}

\begin{rem} \label{zeta6}
Similar to Remark \ref{zeta4}, in the case where $\GSpin_\flat$ is the split group $\GSpin_6,$ 
we have $\zeta_\flat= \mathbbm{1}$ and Theorem \ref{1-1 for GSpin6} implies
\begin{equation} \label{generic bij 6}
\Pi_{\vp}({\GSpin}_6) \, \overset{1-1}{\longleftrightarrow} \, \Irr(\cS_{\vp}(\widehat{{\GSpin}_6})) \, \s \, \Irr(I^{\GSpin_6}(\ts)). 
\end{equation}
Suppose that there is a generic representation in $\Pi_{\vp}({\GSpin}_6)$ with respect to a given Whittaker data for $\GSpin_6.$ 
Then, by \cite[Chapter 3]{hs11}, we can normalize  the bijection \eqref{generic bij 6} above, so that the trivial character 
$\mathbbm{1} \in \Irr(\cS_{\vp})$ maps to the generic representation in $\Pi_{\vp}({\GSpin}_6).$ 
\end{rem}
%

\subsection{Preservation of Local Factors via $\L_6$} \label{L-factors6}

As in Section \ref{L-factors4}, we can again verify that in the case of the split form $\GSpin_6$ the map $\L_6$ preserves 
the local $L$-, $\epsilon$-, and $\gamma$-factors when these local factors are defined.  
We recall that on the representation side, these factors are defined via the Langlands-Shahidi method when 
the representations are generic (or non-supercuspidal induced from generic via the Langlands classification) 
and on the parameter side, they are Artin factors associated to the representations of the Weil-Deligne group of $F.$ 

Since the Langlands-Shahidi method is available for generic data, we do not yet have a counterpart for the local factors for the 
non quasi-split $F$-inner forms of $\GSpin_6.$  This is the reason why we are limiting ourselves to the case of the split form below; however, 
a remark similar to Remark \ref{inner-form-preserve} applies again. 

\begin{pro} \label{preserve6}
Let $\tau$ be an irreducible admissible representation of $\GL_r(F),$ $r \ge 1,$ and 
let $\si$ be an irreducible admissible representation of $\GSpin_6(F),$ which we assume to be either 
$\psi$-generic or non-supercuspidal if $r > 1.$  
Here, generic is defined with respect to a non-trivial additive character $\psi$ of $F$ in the usual way.  

Let $\vp_\tau$ be the $L$-parameter of $\tau$ via the LLC for $\GL_r(F)$ and let $\vp_\si = \L_6 (\si).$ 
Then, 
\begin{eqnarray*} 
\gamma(s, \tau \times \si, \psi) &=& \gamma(s, \vp_\tau \otimes \vp_\sigma, \psi), \\ 
L(s, \tau \times \si) &=& L(s, \vp_\tau \otimes \vp_\sigma), \\ 
\epsilon(s, \tau \times \si, \psi) &=& \epsilon(s, \vp_\tau \otimes \vp_\sigma, \psi). 
\end{eqnarray*} 
The local factors on the left hand side are those attached by Shahidi \cite[Theorem 3.5]{shahidi90annals} to the 
$\psi$-generic representations of the standard Levi subgroup $\GL_r(F) \times \GSpin_6(F)$ in $\GSpin_{2r+6}(F)$ 
and the standard representation of the dual Levi $\GL_r(\CC) \times \GSO(6,\CC),$ and extended to all non-generic non-supercuspidal 
representations via the Langlands classification and the multiplicativity of the local factors \cite[\S 9]{shahidi90annals}.  
The factors on the right hand side are Artin local factors associated to the given representations of the Weil-Deligne group of $F.$  
\end{pro}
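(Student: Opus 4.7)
The proof will proceed in close parallel with that of Proposition \ref{preserve4}, with the triple $(\GSpin_4, \GL_2 \times \GL_2, pr_4)$ replaced throughout by $(\GSpin_6, \GL_1 \times \GL_4, pr_6)$. The two inputs driving the argument are the multiplicativity of Shahidi's local factors with respect to parabolic induction \cite[\S 9]{shahidi90annals} and their compatibility with restriction in a pair $(G, \tG)$ satisfying \eqref{cond on G} whose standard Levi subgroups match up as $M = G \cap \tM$.

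First I would reduce to the case where $\tau$ and $\sigma$ are both generic and supercuspidal. If $\sigma$ is non-generic and non-supercuspidal (with $r > 1$), write $\tau \boxtimes \sigma$ as the Langlands quotient of a standard module $\operatorname{Ind}_P^{\GL_r \times \GSpin_6}(\pi),$ where $\pi$ is an essentially tempered representation of a proper standard Levi of $P.$ Since every proper Levi subgroup of $\GSpin_6$ is of type $A$ and hence a product of $\GL$ factors (up to center), $\pi$ is automatically generic; multiplicativity of the local factors then reduces the identities for $\tau \boxtimes \sigma$ to the corresponding ones for the generic components of $\pi.$ For generic supercuspidal $\sigma,$ I would pick a generic supercuspidal lift $\ts \in \Irr(\GL_1 \times \GL_4)$ with $\sigma \hookrightarrow \operatorname{Res}^{\GL_1 \times \GL_4}_{\GSpin_6}(\ts);$ applying the restriction-compatibility of Shahidi's factors to $(G, M) = (\GSpin_{2r+6}, \GL_r \times \GSpin_6)$ and to the ambient pair with $\tM = \GL_r \times \GL_1 \times \GL_4$ yields
\[
\gamma(s, \tau \times \sigma, \psi) = \gamma(s, \tau \times \ts, \psi),
\]
and analogously for $L$- and $\epsilon$-factors. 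The right-hand side, by the LLC for general linear groups, equals $\gamma(s, \vp_\tau \otimes \tvp_{\ts}, \psi),$ where $\tvp_{\ts}$ is the $L$-parameter of $\ts.$ Since $\vp_\sigma = pr_6 \circ \tvp_{\ts}$ by the construction of $\L_6$ in \eqref{L map for GSpin6}, and the standard representation of $\GL_r(\CC) \times \GSO_6(\CC)$ pulls back via $pr_6$ to the tensor product representation on the $\tG$ side appearing in \eqref{ex seq L-gp for GSpin6}, the Artin factors agree, completing the chain.

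The most delicate step is the restriction-compatibility in the generic supercuspidal case for the specific pair of ambient groups at hand; this is a consequence of the Whittaker-based definition of Shahidi's factors and the behavior of intertwining operators under restriction of generic representations, as detailed in \cite{shahidi90ps} and \cite[\S 5]{asgari-cjm} for $\GSpin$ groups. Once this compatibility is in place, the remaining steps — multiplicativity for the descent from non-generic $\sigma,$ the LLC for $\GL_n,$ and the matching of dual-side representations via diagram \eqref{projection parameters 6} — are formal, and no new ingredient beyond what is already used in the proof of Proposition \ref{preserve4} is needed.
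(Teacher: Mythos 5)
Your overall strategy is the paper's own: the published proof of Proposition \ref{preserve6} consists of the single remark that the argument of Proposition \ref{preserve4} carries over, and your reduction (non-generic non-supercuspidal data to generic via standard modules and multiplicativity, then the generic supercuspidal case via restriction-compatibility of Shahidi's factors, the LLC for general linear groups, and the diagram \eqref{projection parameters 6}) is exactly that argument transported to $(\GSpin_6,\ \GL_1\times\GL_4,\ pr_6).$

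There is, however, one explicit step in your write-up that is false as stated, and it is precisely the point where the $\GSpin_6$ case is not a verbatim copy of the $\GSpin_4$ case. It is not true that every proper Levi subgroup of $\GSpin_6$ is a product of $\GL$ factors up to center: the standard Levi attached to the subset $\{\alpha_2,\alpha_3\}$ of $\Delta_6$ (the $(2,2)$ block Levi in the realization \eqref{convenient iso GSpin6}) is isomorphic to $\GL_1\times\GSpin_4,$ via $(t,(a,b))\mapsto (t\det a,\ (ta,\,b)).$ Tempered representations of $\GSpin_4(F)$ need not be $\psi$-generic --- indeed Section \ref{LLC4} shows its tempered $L$-packets can have $2$ or $4$ members, only one of which is $\psi$-generic --- so for a non-generic non-supercuspidal $\si$ whose Langlands datum lives on this Levi, your claim that the essentially tempered inducing representation $\pi$ ``is automatically generic'' fails. (For $\GSpin_4$ the analogous claim is fine, since its proper Levis involve only $\GL$ and torus factors; that is what the proof of Proposition \ref{preserve4} actually uses.) The gap is repairable, but it needs an extra ingredient you do not mention: when the tempered datum is $\chi\boxtimes\pi_4$ with $\pi_4$ a tempered representation of $\GSpin_4(F),$ multiplicativity reduces the factors for $\tau\boxtimes\si$ to factors of $\tau$ against $\chi$ and against $\pi_4,$ and the latter are then covered by the already-established Proposition \ref{preserve4} under its hypotheses (if $\pi_4$ is a non-generic supercuspidal, the left-hand-side factors are not defined by the Langlands--Shahidi method at all, which is the same boundary the paper itself acknowledges). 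You should replace the blanket genericity claim for $\pi$ by this explicit appeal to Proposition \ref{preserve4}; the remaining steps of your argument are sound and agree with the paper.
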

\begin{proof}
The proof is similar to that of Proposition \ref{preserve4} and we will not repeat it. 
\end{proof} 
%


\end{document}